\numberwithin{equation}{section}
\numberwithin{figure}{section}
\def\theenumi{\arabic{enumi}}
\def\theenumii{\alph{enumii}}
\def\p@enumii{\theenumi.}
\def\theenumiii{\arabic{enumiii}}
\def\p@enumiii{(\theenumi)(\theenumii)}
\def\p@enumiv{\p@enumiii.\theenumiii}
\newtheorem{theorem}{Theorem}[section]
\newtheorem{claim}[theorem]{Claim}
\newtheorem{corollary}[theorem]{Corollary}
\newtheorem{fact}[theorem]{Fact}
\newtheorem{lemma}[theorem]{Lemma}
\newtheorem{proposition}[theorem]{Proposition}
\theoremstyle{definition}
\begin{document}
\title{Neretin groups admit no non-trivial invariant random subgroups}
\author{Tianyi Zheng}
\begin{abstract}
We show that Neretin groups have no non-trivial invariant random subgroups.
These groups provide first examples of non-discrete, compactly generated,
locally compact groups with this property.
\end{abstract}

\date{May 18, 2019}
\address{Tianyi Zheng, Department of Mathematics, UC San Diego, 9500 Gilman
Dr, La Jolla, CA 92093, USA. Email address: tzheng2@math.ucsd.edu.}
\maketitle

\section{Introduction}

Let $G$ be a locally compact group and denote by ${\rm Sub}(G)$
the space of closed subgroups of $G$ equipped with the Chabauty topology.
An \emph{invariant random subgroup} (IRS) of $G$, defined in \cite{AGV},
is a Borel probability measure on ${\rm Sub}(G)$ which is invariant
under conjugation by $G$. 

Normal subgroups corresponds to $\delta$-measures on ${\rm Sub}(G)$.
A subgroup $H$ of $G$ is said to be of finite co-volume, or \emph{co-finite}
for short, if $H$ is closed and $G/H$ carries a $G$-invariant probability
measure. A \emph{lattice} in $G$ is a discrete subgroup of finite
co-volume. Co-finite subgroups give rise to IRSs: the pushforward
of a $G$-invariant probability measure on $G/H$ under the map $gH\mapsto gHg^{-1}$
is an IRS. Thus IRSs can be viewed as generalizations of both normal
subgroups and lattices. It is natural to ask what properties of normal
subgroups or lattices can be extended to IRSs. In the other direction,
viewing lattices as elements in the space of IRSs on ${\rm Sub}(G)$
turns out to be a powerful tool in studying lattices, see \cite{7S,Gelander2}. 

Invariant random subgroups are closely related to probability measure
preserving (p.m.p.\@) actions. Given a p.m.p.\@ action $G\curvearrowright(X,m)$,
the pushforward of the probability measure $m$ under the stabilizer
map $x\mapsto{\rm St}_{G}(x)$ gives rise to an IRS, which we refer
to as the stabilizer IRS of the action $G\curvearrowright(X,m)$.
It is known that all IRSs arise in this way (\cite{AGV,7S}), and
moreover, an ergodic IRS arises as the stabilizer IRS of an ergodic
p.m.p.\@ action (\cite[Proposition 3.5]{CreutzPeterson}). 

We say that $G$ has no non-trivial IRSs if every IRS is a convex
combination of $\delta_{\{id\}}$ and $\delta_{G}$. By the characterization
of IRSs in terms of stabilizers as cited above, $G$ has no non-trivial
IRSs if and only if every non-trivial ergodic p.m.p.\@ action of
$G$ is essentially free. Recall that an action is \emph{essentially
free }if there is a full measure subsets consisting of points with
trivial stabilizer.

In \cite{7S2} it is asked whether there exists a simple, non-discrete
locally compact group which does not have non-trivial IRSs; and Neretin
groups are proposed as candidates. A more detailed discussion of this
question can be found in the survey \cite{Gelander1}. The supporting
evidences are that Neretin groups are abstractly simple by \cite{kapoudjian},
and they are first examples of locally compact group which do not
admit any lattices by \cite{BCGM}. Note that many examples of groups
with no nontrivial IRSs can be found among countable groups, see for
example \cite{Dudko-Medynets2}. First examples of non-discrete locally
compact groups with no nontrivial IRSs are constructed in \cite{LBMB2}.
The groups constructed in \cite{LBMB2} are not compactly generated. 

Let $\mathcal{T}$ be a $(d+1)$-regular unrooted tree. The Neretin
group $\mathcal{N}_{d}$ is the group of \emph{almost automorphisms}
of $\mathcal{T}$, or equivalently, the group of \emph{spheromorphisms}
of $\partial\mathcal{T}.$ The group $\mathcal{N}_{d}$ is introduced
by Neretin in \cite{neretin} as combinatorial analogues of the group
of diffeomorphisms of the circle, with some ideas tracing back to
his earlier work \cite{neretin1}. There is a unique group topology
on $\mathcal{N}_{d}$ such that the natural inclusion ${\rm Aut}(\mathcal{T})\hookrightarrow\mathcal{N}_{d}$
is continuous and open, and endowed with this topology, $\mathcal{N}_{d}$
is locally compact and compactly generated, see \cite{caprace-de-dedts}.
Neretin groups are now fundamental examples in the growing structure
theory of totally disconnected locally compact groups, see \cite{CRW1,CRW2}
and references therein. 

The main goal of the present work is to show that Neretin groups admit
no nontrivial IRSs. Our argument applies to a generalization of Neretin
groups, called coloured Neretin groups, which are introduced and studied
recently in \cite{Lederle}. We now briefly describe these groups,
more precise definitions are recalled in Section \ref{sec:Preliminaries}.
For every vertex of $\mathcal{T}$, fix a bijection from the edges
incident to it to the set of colours $D=\{0,1,\ldots,d\}$. Given
a subgroup $F\le{\rm Sym}(D)$, Burger and Mozes \cite{Burger-Mozes}
constructed a closed subgroup of ${\rm Aut}(\mathcal{T})$, denoted
by $U(F)$, which is the universal group with local actions at every
vertex in $F$. The \emph{coloured Neretin group }$\mathcal{N}_{F}$
is defined as the group of $U(F)$-almost automorphisms. It is shown
in \cite{Lederle} that there is a unique group topology on $\mathcal{N}_{F}$
such that the inclusion $U(F)\hookrightarrow\mathcal{N}_{F}$ is continuous
and open, and endowed with this topology, $\mathcal{N}_{F}$ is locally
compact and compactly generated. 

\begin{theorem}\label{main}

Let $F<{\rm Sym}(D)$ be any subgroup. Let $\mu$ be an ergodic IRS
of the coloured Neretin group $\mathcal{N}_{F}$. Then either $\mu=\delta_{\{id\}}$
or $\mu$-a.e. $H$ contains the derived subgroup $\mathcal{N}_{F}'=\left[\mathcal{N}_{F},\mathcal{N}_{F}\right]$.
In particular, the Neretin group $\mathcal{N}_{d}$, corresponding
to the case $F={\rm Sym}(D)$, admits no non-trivial IRSs. 

\end{theorem}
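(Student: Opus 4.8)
The plan is to show that every ergodic IRS $\mu\ne\delta_{\{id\}}$ is concentrated on ``large'' subgroups, by first reducing to a statement about confined subgroups and then exploiting the dynamics of the action $\mathcal N_F\curvearrowright\partial\mathcal T$. We may assume $\mu\ne\delta_{\{id\}}$; since $\{\{id\}\}$ is a $\mathcal N_F$-fixed point of $\mathrm{Sub}(\mathcal N_F)$, ergodicity forces $\mu(\{\{id\}\})=0$, so $\mu$-almost every $H$ is non-trivial. We will use freely the following features of the action on the Cantor set $\partial\mathcal T$, recalled in Section~\ref{sec:Preliminaries}: it is faithful, minimal, \emph{micro-supported} (the rigid stabiliser $\mathrm{rist}(\Omega)\le\mathcal N_F$ of elements acting trivially off a clopen set $\Omega$ is non-trivial whenever $\Omega\ne\emptyset$, and is itself a perfect coloured-Neretin-type group), and \emph{compressible} (for every proper clopen $\Omega\subsetneq\partial\mathcal T$ and non-empty clopen $U$ there is $g\in\mathcal N_F$ with $g\Omega\subseteq U$); moreover $\mathcal N_F'$ is generated by the subgroups $\mathrm{rist}(\Omega)$, $\mathcal N_F$ acts transitively on the half-trees of $\mathcal T$, and, by Kapoudjian's simplicity theorem \cite{kapoudjian}, $\mathcal N_d'=\mathcal N_d$ when $F=\mathrm{Sym}(D)$.

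The first real step is to reduce the theorem to the assertion that \emph{every confined subgroup $H\le\mathcal N_F$ contains $\mathcal N_F'$}, where $H$ is \emph{confined} if the Chabauty-closure of its conjugacy class $\{gHg^{-1}:g\in\mathcal N_F\}$ avoids $\{id\}$ --- equivalently, if some compact $Q\subseteq\mathcal N_F\setminus\{id\}$ meets every conjugate of $H$. The set of confined subgroups is Borel and conjugation-invariant, and, since $\mathcal N_F$ is non-compact while $\mu$ is a conjugation-invariant probability measure, a recurrence argument for the measure-preserving system $\mathcal N_F\curvearrowright(\mathrm{Sub}(\mathcal N_F),\mu)$ shows that an ergodic IRS is either $\delta_{\{id\}}$ or concentrated on confined subgroups; this is the point at which the ``random'' hypothesis is genuinely used.

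To prove that a confined $H$ contains $\mathcal N_F'$, fix a proper clopen $\Omega\subsetneq\partial\mathcal T$; since the $\mathrm{rist}(\Omega)$ generate $\mathcal N_F'$ it suffices to show $\mathrm{rist}(\Omega)\subseteq H$. Let $Q\subseteq\mathcal N_F\setminus\{id\}$ witness confinement. Compactness of $Q$ yields a uniform complexity bound: there is $r$ so that each $q\in Q\setminus\{id\}$ has an ``active'' cone $C_q$ of depth $\le r$ (meaning $q(C_q)\cap C_q=\emptyset$), and together with $q(C_q)$ it then has at least two disjoint active cones. Using compressibility, I would choose $g\in\mathcal N_F$ so that $g(\partial\mathcal T\setminus\Omega)$ lies in a prescribed tiny cone avoiding these bounded-depth active cones; then for the element $q_g\in Q\setminus\{id\}$ supplied by confinement (so $h:=g^{-1}q_gg\in H$), the clopen set $C':=g^{-1}C_{q_g}\subseteq\Omega$ satisfies $h(C')\cap C'=\emptyset$. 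Letting $g$ vary produces many elements of $H$ that, in this weak ``moves-a-small-set-off-itself'' sense, are localised in arbitrarily small clopen subsets of $\Omega$; an Epstein-style double-commutator argument carried out inside the perfect group $\mathrm{rist}(\Omega)$ --- using only products and commutators of elements already placed in $H$ --- should then upgrade this to $\mathrm{rist}(\Omega)\subseteq H$. Doing this for every proper clopen $\Omega$ gives $\mathcal N_F'\subseteq H$; when $F=\mathrm{Sym}(D)$ this is all of $\mathcal N_d$, so $\mathcal N_d$ admits no non-trivial IRS.

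The main obstacle is the last part of the third step. A single element of $Q$, hence a single conjugate $g^{-1}q_gg$, may have no fixed point on $\partial\mathcal T$ at all, so it can never be conjugated into any $\mathrm{rist}(\Omega)$; and $H$, not being normal, is not stable under conjugation by arbitrary elements of $\mathcal N_F$. The delicate point is therefore to use the \emph{entire} conjugacy class of $H$ --- the fact that \emph{every} $g$ contributes an element $g^{-1}q_gg\in H$ --- together with compressibility to assemble, purely from products and commutators of such elements, a non-trivial element of $\mathrm{rist}(\Omega')$ for arbitrarily small $\Omega'\subseteq\Omega$, and then to bootstrap up to all of $\mathrm{rist}(\Omega)$, all uniformly in the adversarially chosen $q_g$ --- this uniformity is exactly why the complexity bound on $Q$ is needed. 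The recurrence reduction of the second paragraph is standard in spirit but must also be executed carefully for a non-discrete group.
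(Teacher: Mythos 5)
There is a genuine gap, and it occurs already at your second step. The claim that ``an ergodic IRS is either $\delta_{\{id\}}$ or concentrated on confined subgroups'' is not a consequence of conjugation-invariance plus non-compactness, and it is in fact false as a general principle. Conjugation by each $g$ preserves $\mu$, but Poincar\'e-type recurrence gives returns to positive-measure sets; it gives no obstruction to the conjugacy class of a $\mu$-generic $H$ accumulating at $\{id\}$ in the Chabauty topology. A concrete counterexample: in the lamplighter group $\left(\bigoplus_{\mathbb{Z}}\mathbb{Z}/2\right)\rtimes\mathbb{Z}$, let $H$ be the subgroup of lamps supported on a Bernoulli$(p)$ random subset $S\subseteq\mathbb{Z}$; this is an ergodic IRS, $\mu$-a.e.\ $H\neq\{id\}$, yet a.e.\ $H$ is not confined, since for any finite $Q$ avoiding the identity one can shift into a gap of $S$ and obtain a conjugate of $H$ disjoint from $Q$. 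So for $\mathcal{N}_F$ the statement ``$\mu$-a.e.\ $H$ is confined'' is not a soft reduction: it is essentially the hard content of the theorem (it is exactly what the paper extracts from Lemma \ref{cover} together with Proposition \ref{contain}, and those require the quantitative work in finite sub-quotients, the tree-matching estimates, Praeger--Saxl and Borel--Cantelli). Assuming it at the outset removes precisely the part of the argument where the IRS hypothesis must be exploited in a non-trivial way.

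The third step is also not merely ``delicate'' but open as you describe it, and you acknowledge this yourself. From confinement you only obtain, for each $g$, some element $g^{-1}q_g g\in H$ with a small clopen set moved off itself; that is far weaker than having elements of $H$ supported in small subsets of $\Omega$, and the Epstein/double-commutator mechanism needs either normality of $H$ or membership in $H$ of conjugates of $h$ by elements of $R(\Omega)$, which you do not have. Making ``confined $\Rightarrow$ contains $\mathcal{N}_F'$'' work for a non-discrete group of this type is a substantial theorem in its own right, not a routine bootstrap, and nothing in your sketch supplies the uniformity over the adversarial choice of $q_g$. By contrast, the paper's route avoids both issues: it proves Proposition \ref{contain} for the open subgroups $O_F^A$ via induced IRSs in the finite quotients $S_F^A(n)$, then uses Lemma \ref{cover} to see that a.e.\ non-trivial $H$ meets some $O_F^A$, and finally transfers to the countable dense subgroup $V_F$ via $H\mapsto H\cap V_F$ and the Dudko--Medynets classification of IRSs of $V_F'$, concluding $H\supseteq\overline{V_F'}\supseteq\mathcal{N}_F'$. (A smaller point: your final step also uses that the rigid stabilizers generate $\mathcal{N}_F'$ for general $F$, which should be justified from \cite{Lederle} rather than asserted.)
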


For general $F$, by \cite{Lederle} $\mathcal{N}_{F}'$ is simple,
open and of finite index in $\mathcal{N}_{F}$. In particular, Theorem
\ref{main} implies that $\mathcal{N}_{F}$ has no lattices, answering
\cite[Question 1.5]{Lederle} by removing the constraints on $F$. 

The key step in the proof of Theorem \ref{main} is the following
statement on containment of rigid stabilizers. Let $G$ be a group
acting on a topological space $X$ by homeomorphisms and $U\subseteq X$
an open subset. Denote by $R_{G}(U)$ the \emph{rigid stabilizer}
of $U$ in $G$, that is, $R_{G}(U)=\{g\in G:x\cdot g=x\mbox{ for all }x\in X\setminus U\}$.
Given a finite subtree $A$ of $\mathcal{T},$ denote by $B_{n}(A)$
the subtree with vertices within distance $n$ to $A$. Denote by
$O_{F}^{A}$ the subgroup which consists of almost automorphisms which
can be represented by a triple $\left(B_{n}(A),B_{n}(A),\varphi\right)$
for some $n\in\mathbb{N}$, see the precise definition in Section
\ref{sec:Induced}. When $A$ consists of a single vertex, $O_{F}^{A}$
is the same as the group $O$ considered in \cite{BCGM,Lederle}.
Note that $O_{F}^{A}$ is an open subgroup of $\mathcal{N}_{F}$. 

\begin{proposition}\label{contain}

Let $A$ be a finite complete subtree of $\mathcal{T}$ and $\mu_{A}$
be an IRS of $O_{F}^{A}$. Then $\mu_{A}$-a.e. $H$ satisfies the
following: if $H\neq\{id\}$, then there exists a non-empty open set
$U\subseteq\partial\mathcal{T}$ such that 
\[
\left[R_{O_{F}^{A}}(U),R_{O_{F}^{A}}(U)\right]<H.
\]

\end{proposition}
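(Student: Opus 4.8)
The plan is to exploit the rich supply of "local" structure in $O_F^A$ — in particular the fact that the action on $\partial\mathcal{T}$ is highly transitive on cylinders and that rigid stabilizers of disjoint cylinders commute — in order to run a Poincaré-recurrence / double-commutator argument on the IRS $\mu_A$. Fix a non-trivial $H$ in the support (in the measurable sense) and pick $h \in H$, $h \neq id$. Since $h$ is an almost automorphism, there is a cylinder $U_0 \subseteq \partial\mathcal{T}$ (the boundary of a subtree hanging off $B_n(A)$ for suitable $n$) such that $U_0 \cdot h \cap U_0 = \emptyset$; this is where the specific geometry of $O_F^A$ enters, guaranteeing we may choose such $U_0$ and even a nested family of smaller cylinders inside it on which $U(F)$, hence $R_{O_F^A}$, still acts with plenty of room.

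First I would set up the commutator identity: for $g \in R_{O_F^A}(U_0)$ one has $[g, h] = g^{-1}(h^{-1}gh)$, and since $U_0 h^{-1} \cap U_0 = \emptyset$ the conjugate $h^{-1}gh$ lies in $R_{O_F^A}(U_0 \cdot h^{-1})$, a rigid stabilizer of a cylinder disjoint from $U_0$. Hence $[g,h]$ is a product of two elements supported on disjoint cylinders, and in particular $[g,h]$ restricted to $U_0$ agrees with $g^{-1}$. Now $[g,h] \in H$ whenever $g \in H$; the point of the IRS hypothesis is to produce many such $g$ in $H$. This is the standard mechanism: by the pointwise ergodic theorem / Poincaré recurrence for the conjugation action underlying $\mu_A$ (cf. the characterisation of IRSs via stabiliser maps in the introduction), for $\mu_A$-a.e. $H$ and any fixed open $V$, the set of $g \in R_{O_F^A}(V)$ that happen to lie in $H$ must, after averaging, be "large enough"; more precisely one shows that if $H$ meets no rigid stabiliser non-trivially on a full-measure set then $H$ is forced to be trivial, contradicting the choice of $h$. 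So on a positive-measure (hence, by ergodicity, conull) set of $H$ there is a non-empty cylinder $V$ with $R_{O_F^A}(V) \cap H \neq \{id\}$.

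The second stage is to bootstrap from "$H$ contains one non-trivial rigid-stabiliser element on $V$" to "$H$ contains the full derived subgroup $[R_{O_F^A}(U), R_{O_F^A}(U)]$ for some non-empty open $U$." Take $k \in R_{O_F^A}(V) \cap H$, $k \neq id$, and choose a small cylinder $W \subseteq V$ with $W \cdot k \cap W = \emptyset$ and $W \cdot k \subseteq V$. For any $a, b \in R_{O_F^A}(W)$, the conjugates $k^{-1}ak$ and $k^{-1}bk$ are supported on $W \cdot k$, disjoint from $W$, so they commute with everything supported on $W$; expanding $[a,k][b,k]$ and using these commutation relations one checks that a suitable product of $H$-conjugates of $k$ by elements of $R_{O_F^A}(W)$ realises the commutator $[a,b]$. (This is the familiar trick showing that in groups with commuting rigid stabilisers of disjoint open sets, a single element whose support can be "pushed off itself" generates, together with its conjugates, all commutators of a rigid stabiliser.) Hence $[R_{O_F^A}(W), R_{O_F^A}(W)] < H$, and we take $U = W$.

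The main obstacle is the first stage — converting the IRS invariance into the existence of a non-trivial element of $H$ inside some rigid stabiliser. One cannot just invoke recurrence blindly: the rigid stabilisers $R_{O_F^A}(V)$ are open but of infinite index, and the conjugation dynamics on $\mathrm{Sub}(O_F^A)$ is not measure-preserving in an obvious way under the relevant "moves." I expect the right tool is a careful compactness/continuity argument in the Chabauty topology combined with the fact that an almost automorphism has a genuine non-trivial action on some finite piece of the tree boundary, feeding into a measure-theoretic averaging over the (compact) group $U(F)$-part of $O_F^A$; the colour group $F$ being arbitrary (possibly trivial) means this averaging must be done with $U(F)$ rather than with full $\mathrm{Aut}(\mathcal{T})$, so the argument has to use only the transitivity that $U(F)$ genuinely provides on cylinders, which is exactly the transitivity on the ends of subtrees grafted onto $B_n(A)$. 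Handling that uniformity across all $F$ is the delicate point, and it is presumably where the bulk of Section~\ref{sec:Induced} is spent.
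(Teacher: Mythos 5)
Your proposal has two genuine gaps, and together they miss the actual content of the proposition. First, the step you yourself flag as ``the main obstacle'' -- producing, for $\mu_A$-a.e.\ non-trivial $H$, a non-trivial element of $H$ inside some rigid stabilizer -- is not an auxiliary technicality that can be settled by Poincar\'e recurrence, Chabauty compactness, or averaging over the compact part $U(F)$: it \emph{is} the heart of Proposition \ref{contain}, and the soft recurrence-style argument you sketch is precisely what the paper points out is only known for countable groups (the double commutator lemma of \cite{rigidstab}); the author explicitly raises as an open question whether such soft arguments extend to the non-discrete setting. The paper instead proves this by a long quantitative detour: it passes to the induced IRSs $\bar{\mu}_n^A$ in the finite quotients $S_F^A(n)$, shows via the counting Lemmas \ref{E1} and \ref{E2}, the Praeger--Saxl bound, and the orbit-matching probability estimates of Section \ref{sec:Tree-match} (Lemmas \ref{caseI}--\ref{caseIII}, resp.\ Lemma \ref{colorbound} for general $F$) that any subgroup of $S_F^A(n)$ not containing a giant alternating group contributes only a summably small amount to the probability of the event $\Theta_{u,v}^{A,n}$, and then applies Borel--Cantelli. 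None of this is replaceable by the compactness/averaging heuristic you describe, and you give no argument for it.

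Second, your ``bootstrap'' stage is group-theoretically incorrect as written. From a single $k\in R_{O_F^A}(V)\cap H$ with $W\cdot k\cap W=\emptyset$ you form conjugates $a^{-1}ka$ and commutators $[k,a]=k^{-1}(a^{-1}ka)$ with $a\in R_{O_F^A}(W)$; but $a$ need not lie in $H$, so these elements lie in the normal closure of $k$, not in $H$. The classical commutator trick you invoke requires $H$ to be normal; for an IRS it must be replaced by a conditional-measure argument exploiting conjugation invariance (this is exactly what \cite{rigidstab} does for countable groups, and what Lemmas \ref{E1}--\ref{E2} encode here, applied to finite quotients). Moreover, even if one non-trivial element of each finite-level quotient were obtained, $R_{O_F^A}(W)$ is uncountable, so its full derived subgroup cannot be reached by finitely many pointwise manipulations: the paper concludes by showing that the finite quotients $\pi_{n}\left(H\cap O_F^A(n)\right)$ contain the relevant alternating groups at \emph{every} sufficiently deep level (Claim \ref{children} propagating the giant component down the tree) and only then uses that $H$ is Chabauty-closed to pass to the limit. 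This closure/limit mechanism is entirely absent from your proposal.
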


The statement of Proposition \ref{contain} is an exact analogue of
the double commutator lemma for IRSs of a countable group in \cite[Theorem 1.2]{rigidstab}.
In the countable setting, one can show the double commutator lemma
for IRSs with rather soft arguments. It seems to be an interesting
question to what extent such a result is true in the non-discrete
setting. Proposition \ref{contain} is proved via studying induced
IRSs of finite sub-quotients of $O_{F}^{A}$, see more discussion
below. 

The Higman-Thompson group $V_{d,d+1}$ embeds in $\mathcal{N}_{d}$
as a dense subgroup, see \cite{caprace-de-dedts}. It is observed
in \cite{Nek-Cuntz} that the topological full group of the one-sided
Bernoulli shift over the alphabet with $d$ letters is isomorphic
to a Higman-Thompson group. More generally, topological full groups
of one-sided irreducible shifts of finite type are introduced and
investigated in \cite{Matui2015}. For the coloured Neretin group
$\mathcal{N}_{F}$, it is shown in \cite[Theorem 3.9]{Lederle} that
$\mathcal{N}_{F}$ has a dense subgroup $V_{F}$, which can be identified
as the topological full group of a one-sided irreducible shift of
finite type. By \cite[Corollary 3.9]{Dudko-Medynets2}, the countable
group $V_{F}'$ does not have non-trivial IRSs. 

Proposition \ref{contain} allows us to transfer the problem of IRSs
of $\mathcal{N}_{F}$ to $V_{F}$ by considering the intersection
map $H\mapsto H\cap V_{F}$. More precisely, Proposition \ref{contain}
guarantees that almost surely $H\cap V_{F}\neq\{id\}$, so that known
results on IRSs of $V_{F}$ as cited above can be applied, see Section
\ref{contain}.

Most of this paper is devoted to the proof of Proposition \ref{contain}.
The basic idea in the proof is that in the finite sub-quotients of
$O_{F}^{A}$ considered, if a subgroup does not contain a large finite
alternating group, then the probability that a random conjugate of
it containing a specific kind of almost automorphisms is small, quantitatively.
Then the Borel-Cantelli lemma can be applied to combine the estimates
in finite sub-quotients to obtain almost sure statements on the IRS.
In Section \ref{sec:Effective} we formulate two general bounds for
IRSs in countable groups in terms of subgroup index (Lemma \ref{E1})
and conjuagcy class size (Lemma \ref{E2}). The starting point of
the proof in \cite{BCGM} for absence of lattices in $O$ is a co-volume
estimate in the finite sub-quotients, which is later confronted by
the discreteness of the lattice. In some sense the subgroup index
Lemma \ref{E1} is a replacement for co-volume bounds in the context
of IRSs, although it is weaker. An outline of the proof of Proposition
\ref{contain} can be found in Section \ref{sec:Induced} after introducing
the necessary objects. We mention that the proof is rather self-contained:
the only result on finite symmetric groups invoked is the Praeger-Saxl
bound \cite{prager-saxl} on the orders of primitive subgroups. 

Following \cite{CRW1,CRW2}, let $\mathscr{S}$ be the class of all
non-discrete, compactly generated, locally compact groups that are
topologically simple. There is an evolving theory which treats $\mathscr{S}$
as a whole, see the survey \cite{Caprace} and references therein.
The class $\mathscr{S}$ naturally divides into two subclasses, $\mathscr{S}_{{\rm Lie}}$
which consists of connected Lie groups in $\mathscr{S}$; and $\mathscr{S}_{{\rm td}}$
which consists of totally disconnected groups in $\mathscr{S}$. Motivated
by the theory of lattices in semisimple Lie groups, it is natural
to investigate lattices and more generally, IRSs of groups in the
class $\mathscr{S}_{{\rm td}}$. It is reasonable to expect that an
abundance of examples of non-discrete compactly generated locally
compact groups with no non-trivial IRSs can be found in the class
$\mathscr{S}_{{\rm td}}$: for instances, some topological full groups
similar to Neretin type groups and certain simple groups acting on
trees with almost prescribed local action introduced and studied in
\cite{LB}. Our proof relies on properties of finite symmetric groups.
It is interesting to develop a more conceptual and robust approach
that could contribute to the study of $\mathscr{S_{{\rm td}}}$. 

\subsection*{Organization of the paper}

In Section \ref{sec:Effective} we formulate two quantitative bounds
for IRSs of countable groups, which might be useful as general tools.
Section \ref{sec:Preliminaries} contains preliminaries on Neretin
type groups. In Section \ref{sec:Induced} the induced IRSs in certain
sub-quotients and relevant events we consider are introduced. In Section
\ref{sec:Proof-of-Theorem} we explain how to deduce Theorem \ref{main}
from Proposition \ref{contain}. Section \ref{sec:Tree-match} contains
an auxiliary bound for the probability of two randomly chosen sets
to be in the same orbit of some tree automorphism group. In Section
\ref{sec:transitive} we present the proof of Proposition \ref{contain}
when $F$ is transitive. At the end of Section \ref{sec:transitive},
the proof of Theorem \ref{main} in the case of transitive $F$, e.g.
$F={\rm Sym}(D)$, is complete. Section \ref{sec:colored} explains
the additional arguments needed to prove Proposition \ref{contain}
for general $F$. 

\subsection*{Acknowledgment}

We are grateful to Nichol\'as Matte Bon for sharing his insights
on Neretin groups. We thank Mikl\'os Ab\'ert for helpful discussions. 

\section{Two counting lemmas for IRSs of countable groups\label{sec:Effective}}

Let $G$ be a locally compact second countable group and ${\rm Sub}(G)$
the space of closed subgroups of $G$. Recall that a pre-basis of
open sets for the \emph{Chabauty topology} is given by sets of the
form 
\[
\left\{ H\in{\rm Sub}(G):H\cap V\neq\emptyset\right\} ,\ \left\{ H\in{\rm Sub}(G):\ H\cap K=\emptyset\right\} ,
\]
where $V$ is a relatively compact open subset of $G$, and $K$ a
compact subset of $G$. The space ${\rm Sub}(G)$ endowed with the
Chabauty topology is compact and metrizable. 

In this section we formulate two quantitative bounds which exploit
the conjugation invariance of an IRS $\mu$. These bounds are applicable
to general countable groups, finite or infinite. 

The first lemma bounds the probability that a random subgroup with
distribution $\mu$ intersects a given set, in terms of the size of
the set and certain subgroup index. In order to state the bound we
introduce some notations. Let $\Gamma\curvearrowright X$ by homeomorphisms
and $U,V$ be two disjoint non-empty open sets in $X$. Given a subgroup
$H$ of $\Gamma$, let
\begin{equation}
H_{U\to V}:=\left\{ h\in H:\ V=U\cdot h\right\} ,\label{eq:SHU}
\end{equation}
and
\begin{equation}
\bar{H}_{U\to V}:=\left\{ h|_{U}:\ h\in H_{U\to V}\right\} .\label{eq:BSHU}
\end{equation}
Elements of $\bar{H}_{U\to V}$ are viewed as partial homeomorphisms
with domain $U$ and range $V$, denoted by $h|_{U}:U\to V$. Let
$\Omega_{U,V}$ be the event that $H$ contains an element which maps
$U$ to $V$, that is,
\[
\Omega_{U,V}=\left\{ H\in{\rm Sub}(\Gamma):\ H_{U\to V}\neq\emptyset\right\} .
\]

Recall that $R_{\Gamma}(U)$ denotes the rigid stabilizer of $U$
in $\Gamma$. In probabilistic expressions involving $\mathbb{E}_{\mu}$
or $\mathbb{P}_{\mu}$, the symbol $H$ denotes a random subgroup
with distribution $\mu$. Write ${\bf 1}_{\Omega}$ for the indicator
of the set $\Omega.$

\begin{lemma}[Subgroup index Lemma]\label{E1}

Let $\Gamma$ be a countable group acting faithfully on a topological
space $X$ by homeomorphisms. Let $U,V$ be two disjoint non-empty
open sets such that $\Gamma_{U\to V}\neq\emptyset$. Let $\mu$ be
an IRS of $\Gamma$. Then for any finite subset $A$ of partial homeomorphisms
in $\bar{\Gamma}_{U\to V}$, we have that $\mu$-a.e. $H$ with $\bar{H}_{U\to V}\cap A\neq\emptyset$
satisfies $\left|R_{\Gamma}(U):\left(\bar{H}_{U\to U}\cap R_{\Gamma}\left(U\right)\right)\right|<\infty$.
Moreover,
\begin{multline}
\mathbb{P}_{\mu}\left(\bar{H}_{U\to V}\cap A\neq\emptyset\right)\\
\le\mathbb{E}_{\mu}\left[\min\left\{ \frac{|A|}{\left|R_{\Gamma}(U):\left(\bar{H}_{U\to U}\cap R_{\Gamma}\left(U\right)\right)\right|},1\right\} {\bf 1}_{\Omega_{U,V}}(H)\right].\label{eq:index}
\end{multline}

\end{lemma}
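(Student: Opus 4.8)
The plan is to exploit conjugation-invariance by averaging over conjugates of a fixed element that maps $U$ to $V$, comparing the "cost" of such an event to a subgroup index. Fix a partial homeomorphism $a \in A$ and pick $g_a \in \Gamma_{U \to V}$ with $g_a|_U = a$. The key observation is that if $H \in \Omega_{U,V}$ contains some element $h$ with $h|_U = a$, then $h g_a^{-1} \in R_\Gamma(U)$ (it maps $U$ to $U$ as a set and in fact is trivial off... no — more carefully, $h g_a^{-1}$ fixes $X \setminus V$? I need to be careful). Let me reconsider: if $h \in H_{U \to V}$ and $h|_U = g_a|_U$, then $h g_a^{-1}$ restricted to $V$ is the identity, but it need not be in $R_\Gamma(U)$. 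The right move is to look at $\bar H_{U \to U} \cap R_\Gamma(U)$: given two elements $h_1, h_2 \in H_{U\to V}$ with the same restriction to $U$, their "difference" lives over $V$; instead one should compare elements of $H_{U \to U}$.

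First I would set up the counting. For $r \in R_\Gamma(U)$, the conjugate $H^r = r^{-1} H r$ satisfies $\overline{(H^r)}_{U \to V} = \overline{H}_{U \to V} \circ (r|_U)$ — precomposition on the domain side, since $r$ fixes everything outside $U$ and in particular fixes $V$ pointwise as a set-map is identity on $V$. Wait, $r \in R_\Gamma(U)$ fixes $X \setminus U$ pointwise, so $r$ fixes $V$ pointwise (as $V$ is disjoint from $U$). Then for $h \in H_{U \to V}$, $r^{-1} h r$ maps $U \to V$ and its restriction to $U$ is $(r^{-1})|_U$ followed by $h|_U$ followed by identity $= (h|_U) \circ (r|_U)^{-1}$... the precise composition needs care but the upshot is: the set $\bar H_{U \to V}$, under conjugation by $R_\Gamma(U)$, gets translated by the right-action of $\bar H_{U \to U} \cap R_\Gamma(U)$-cosets, or rather the relevant stabilizer. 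The point is that $\bar H_{U \to V}$ is a union of right cosets of the subgroup $\bar H_{U \to U} \cap R_\Gamma(U)$ inside $\bar R_\Gamma(U) = R_\Gamma(U)|_U$ composed with a fixed element $a$; so hitting the finite set $A$ has probability controlled by $|A|$ divided by that index.

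The main computation: by conjugation-invariance, $\mathbb{P}_\mu(\bar H_{U\to V} \cap A \neq \emptyset) = \mathbb{P}_\mu(\overline{H^r}_{U \to V} \cap A \neq \emptyset)$ for every fixed $r \in R_\Gamma(U)$. Averaging over a finite set of coset representatives $r_1, \dots, r_N$ of $\bar H_{U\to U}\cap R_\Gamma(U)$ in $R_\Gamma(U)$ (this set depends on $H$, which is the delicate point — one must instead fix representatives and count how many translates of $A$ they can produce), I would show $\sum_i \mathbf{1}[\overline{H^{r_i}}_{U\to V}\cap A \neq \emptyset] \le |A|$ whenever the index is at least $|A|$, because distinct cosets give disjoint translates of $\bar H_{U\to V}$ and each can meet $A$ in at most... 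Actually the clean statement: on $\Omega_{U,V}$, the sets $\{(r_i)|_U : i\}$ acting on a fixed element of $\bar H_{U\to V}$ produce $|R_\Gamma(U) : \bar H_{U\to U}\cap R_\Gamma(U)|$ distinct partial homeomorphisms (when this index is finite), each pair of which is either equal or comes from the same coset; summing indicators of meeting $A$ over cosets is $\le |A|$. Taking expectations and using that all $N$ terms have equal probability yields $N \cdot \mathbb{P}_\mu(\cdots) \le \mathbb{E}_\mu[\min(|A|, N)\mathbf{1}_{\Omega}]$, i.e. the bound; the a.e. finiteness of the index follows because otherwise the left side would be arbitrarily large while the right side is at most $1$ in probability terms.

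**The hard part will be** making rigorous the interplay between the $H$-dependent subgroup $\bar H_{U\to U}\cap R_\Gamma(U)$ and the need to average over $\Gamma$-conjugation with $\Gamma$-measurable weights: one cannot simply "average over cosets of an $H$-dependent subgroup" inside an IRS. The standard fix, which I would implement, is a mass-transport / double-counting argument — define a nonnegative measurable function on pairs (subgroup, group element) counting "how many ways a conjugate by $r$ lands in the event restricted to $A$", integrate in two orders, and use that $\mu$ is conjugation-invariant to transfer the $r$-sum onto the coset structure. Verifying measurability of $H \mapsto \bar H_{U\to V}$, $H \mapsto \bar H_{U\to U}\cap R_\Gamma(U)$, and the index function with respect to the Chabauty $\sigma$-algebra is routine but must be stated; these follow since $\Gamma$ is countable, so each of these is a countable union/intersection of Chabauty-basic sets. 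Once the mass-transport identity is in hand, the inequality $\sum_{\text{cosets}} \mathbf 1[\text{translate meets } A] \le |A|$ is elementary, and the min with $1$ comes from the trivial bound $\mathbb{P} \le 1$.
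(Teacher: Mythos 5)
Your structural picture agrees with the paper's: $\bar H_{U\to V}$ is a coset of $\bar H_{U\to U}$, conjugation by the rigid stabilizer translates it, distinct cosets of $\bar H_{U\to U}\cap R_\Gamma(U)$ give pairwise disjoint translates, and at most $|A|$ of them can meet the finite set $A$. But the probabilistic heart of the lemma is exactly the point you flag and then leave unresolved: you need that, given the data preserved by the conjugation, the law of the coset $\bar H_{U\to V}$ spreads its mass \emph{equally} over the translates, so that each one carries conditional probability at most $1/\left|R_\Gamma(U):\left(\bar H_{U\to U}\cap R_\Gamma(U)\right)\right|$. The fix you sketch --- a mass-transport/double-counting identity with a fixed weight function on $R_\Gamma(U)$, integrated in two orders --- does not deliver this: after Fubini you are left with the $w$-masses of the $H$-dependent cosets, and $\max_C w(C)$ has no uniform bound of the form $1/N(H)$ (if the weights were allowed to depend on $H$, the invariance/Fubini step breaks). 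The paper resolves this by passing to the regular conditional distribution of $\left(\bar H_{U\to V},\bar H_{U\to U}\right)$ given $\bar H_{U\to U}$ on the event $\Omega_{U,V}$, and by conjugating with $\gamma\in R_\Gamma(V)$ (the \emph{range} side): such $\gamma$ fix $U$ pointwise, hence fix the conditioning variable $\bar H_{U\to U}$, while right-translating $\bar H_{U\to V}$ by $\gamma|_V$. Invariance of a conditional \emph{probability} measure on a countable set then forces the orbit of any atom to be finite (this is what gives the a.e.\ finiteness of the index, via the auxiliary subgroups $R_1\le R_\Gamma(V)$ and $T_1=\sigma R_1\sigma^{-1}\le R_\Gamma(U)$) and forces equal mass $\le 1/\ell$ on each translate, whence the bound after summing over the at most $|A|$ cosets meeting $A$.

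Two further points. First, your choice to conjugate by $R_\Gamma(U)$ (the domain side) makes even the conditional route awkward, since such conjugation does not fix $\bar H_{U\to U}$ but conjugates it; the paper's target-side choice is what makes the conditioning variable invariant and the argument clean. Second, your argument for the a.e.\ finiteness of $\left|R_\Gamma(U):\left(\bar H_{U\to U}\cap R_\Gamma(U)\right)\right|$ ("otherwise the left side would be arbitrarily large while the right side is at most $1$") is circular as stated: the displayed inequality with the minimum never forces finiteness by itself. In the paper the finiteness is an output of the same conditional-invariance mechanism (an invariant conditional probability on a countable orbit cannot charge an infinite orbit), not a consequence of the final inequality. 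So the proposal identifies the correct combinatorics but is missing the key idea that makes the estimate rigorous.
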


In the statement of the previous lemma it is understood that in the
expression $\left|R_{\Gamma}(U):\left(\bar{H}_{U\to U}\cap R_{\Gamma}\left(U\right)\right)\right|$,
both $R_{\Gamma}(U)$ and $\bar{H}_{U\to U}$ are viewed as groups
of homeomorphisms of $U$. The second lemma is in the setting of product
of two groups. It bounds the probability that a random subgroup contains
a given set of group elements $B$, in terms of the size of the conjugacy
class of some coset associated with $B$. Given a subset $B\subseteq\Gamma$
of a subgroup $W<\Gamma$, denote by ${\rm Cl}_{W}(B)$ the collection
$W$-conjugates of $B$, that is
\[
{\rm Cl}_{W}(B)=\left\{ g^{-1}Bg:\ g\in W\right\} .
\]

\begin{lemma}[Conjugacy class size Lemma]\label{E2}

Suppose $\Gamma$ is a subgroup of the product $L_{1}\times L_{2}$,
where $L_{1},L_{2}$ are countable. Denote by $\pi_{i}$ the projection
$L_{1}\times L_{2}\to L_{i}$, $i=1,2$. Let $\mu$ be an IRS of $\Gamma$.
Then for any subset $B\subseteq\Gamma$, we have that $\mu$-a.e.
$H$ with $H\supseteq B$ satisfies $\left|{\rm Cl}_{N_{1}}\left(\pi_{1}(B)H_{1}\right)\right|<\infty$.
Moreover, 
\begin{equation}
\mathbb{P}_{\mu}\left(H\supset B\right)\le\mathbb{E}_{\mu}\left[\frac{1}{\left|{\rm Cl}_{N_{1}}\left(\pi_{1}(B)H_{1}\right)\right|}\mathbf{1}_{\left\{ \pi_{2}(A)\subseteq\pi_{2}(H)\right\} }\right],\label{eq:conj}
\end{equation}
where $H_{1}=H\cap\left(L_{1}\times\left\{ id_{\Gamma_{2}}\right\} \right)$
and $N_{1}$ is the normalizer of $H_{1}$ in $\pi_{1}(\Gamma)$. 

\end{lemma}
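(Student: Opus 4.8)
The plan is to run a ``spreading by conjugation'' argument of the type behind Lemma~\ref{E1}, now exploiting the product decomposition $\Gamma\le L_{1}\times L_{2}$. I would first realize $\mu$ as the stabilizer IRS of a probability measure preserving action $\Gamma\curvearrowright(Z,m)$ --- legitimate since every IRS arises this way (\cite{AGV,7S}) --- so that $\mathbb{P}_{\mu}(\mathcal{E})=m\{z:{\rm St}_{\Gamma}(z)\in\mathcal{E}\}$ and conjugation invariance of $\mu$ becomes $\Gamma$-invariance of $m$. As in the statement, $H_{1}=H\cap(L_{1}\times\{id\})$ is identified with a subgroup of $L_{1}$ via $\pi_{1}$; write $N_{1}=N_{\pi_{1}(\Gamma)}(H_{1})$ and $T(H)=\pi_{1}(B)H_{1}\subseteq L_{1}$.

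The core input is an elementary analysis of the event $\{B\subseteq H\}$ through the extension $H_{1}\trianglelefteq H\twoheadrightarrow\pi_{2}(H)$. For $p\in\pi_{2}(H)$ the fibre $\{h\in H:\pi_{2}(h)=p\}$ is a coset of $H_{1}$, hence has set of first coordinates a single left coset $\sigma_{H}(p)\in L_{1}/H_{1}$; one checks that $B\subseteq H$ holds exactly when $\pi_{2}(B)\subseteq\pi_{2}(H)$ and $\sigma_{H}(\pi_{2}(b))=\pi_{1}(b)H_{1}$ for all $b\in B$, so that on this event the ``restricted cocycle'' $\tau(H):=\bigcup_{p\in\pi_{2}(B)}\sigma_{H}(p)$ equals $T(H)$. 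Next, conjugation of $H$ by $g\in\Gamma$ with $\pi_{1}(g)\in N_{1}$ leaves $H_{1}$, $N_{1}$ and $T(H)$ unchanged, replaces $\pi_{2}(H)$ by $\pi_{2}(g)\pi_{2}(H)\pi_{2}(g)^{-1}$, and moves the cocycle by $\sigma_{H}(p')\mapsto\pi_{1}(g)\,\sigma_{H}(\pi_{2}(g)^{-1}p'\pi_{2}(g))\,\pi_{1}(g)^{-1}$; for such $g$ with moreover $\pi_{2}(g)=id$ this carries $\tau(H)$ to its conjugate $\pi_{1}(g)\tau(H)\pi_{1}(g)^{-1}$, while general $g$ are needed to sweep out all of $N_{1}$ and additionally perturb $\pi_{2}(H)$. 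Since $B\subseteq H$ forces $\tau(H)=T(H)$, and the setwise $N_{1}$-stabilizer of $T(H)$ has index $|{\rm Cl}_{N_{1}}(T(H))|$, conjugating $H$ through (lifts of) a transversal of that stabilizer produces $|{\rm Cl}_{N_{1}}(T(H))|$ pairwise distinct conjugates, at most one of which can again satisfy $\tau=T$ and hence again contain $B$ --- and that one only when the second-coordinate inclusion is not destroyed.

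Assembling these facts into a mass transport on the orbit equivalence relation of $\Gamma\curvearrowright(Z,m)$ --- transporting, between a point $z$ with $B\subseteq{\rm St}_{\Gamma}(z)$ and its $|{\rm Cl}_{N_{1}}(T)|$ conjugate positions, equal shares of a unit of mass --- the mass transport principle for p.m.p.\ equivalence relations converts ``at most one conjugate position lies again over $\{B\subseteq H\}$'' into $\mathbb{P}_{\mu}(B\subseteq H)\le\mathbb{E}_{\mu}\big[\mathbf{1}_{\{\pi_{2}(B)\subseteq\pi_{2}(H)\}}/|{\rm Cl}_{N_{1}}(\pi_{1}(B)H_{1})|\big]$, i.e.\ \eqref{eq:conj} (whose $\pi_{2}(A)$ should read $\pi_{2}(B)$); running the same transport with a full unit of mass along each position shows that $|{\rm Cl}_{N_{1}}(\pi_{1}(B)H_{1})|$ cannot be infinite on a positive-measure subset of $\{B\subseteq H\}$ without moving infinite mass out of a finite set, which gives the asserted $\mu$-a.e.\ finiteness. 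The step I expect to be the main obstacle is precisely the passage to this transport: the conjugating group $N_{1}$ depends on $H$, its elements lift from $\pi_{1}(\Gamma)$ to $\Gamma$ only modulo $\Gamma\cap(\{id\}\times L_{2})$ and with an uncontrolled second coordinate, and one must organise the lifts and the transport so that the whole orbit ${\rm Cl}_{N_{1}}(T(H))$ is realised while the failures of $\pi_{2}(B)\subseteq\pi_{2}(H)$ along the way are absorbed into the indicator in \eqref{eq:conj} rather than into an unwanted multiplicative constant. This is exactly where the product structure $\Gamma\le L_{1}\times L_{2}$, as opposed to a single factor, genuinely enters, and it is the technical heart of the argument.
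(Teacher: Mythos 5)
Your reduction of the event $\left\{ B\subseteq H\right\}$ to the pairing condition is the same first step as the paper's, which phrases it via the isomorphism $\varphi_{H}:\pi_{2}(H)/H_{2}\to\pi_{1}(H)/H_{1}$ and the requirement $\varphi_{H}(\pi_{2}(B))=\pi_{1}(B)H_{1}$. The genuine gap comes after that, exactly where you place it yourself: the passage from conjugation by $N_{1}$ to an actual estimate. The claim that conjugating $H$ through lifts of a transversal of the setwise $N_{1}$-stabilizer of $T(H)=\pi_{1}(B)H_{1}$ yields $\left|{\rm Cl}_{N_{1}}(T(H))\right|$ positions of which at most one can again contain $B$ is unjustified as stated, and there is no reason for it to hold pointwise. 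A lift $g\in\Gamma$ of $n\in N_{1}$ has an uncontrolled second coordinate, and $B\subseteq g^{-1}Hg$ is equivalent to: for every $b\in B$, $\pi_{2}(g)\pi_{2}(b)\pi_{2}(g)^{-1}\in\pi_{2}(H)$ and $\sigma_{H}\left(\pi_{2}(g)\pi_{2}(b)\pi_{2}(g)^{-1}\right)=n\pi_{1}(b)n^{-1}H_{1}$. These conditions involve values of your cocycle $\sigma_{H}$ at different points of $\pi_{2}(H)$ than the original ones, so distinct transversal elements can simultaneously produce conjugates containing $B$. One could guarantee $\tau(g^{-1}Hg)=n^{-1}\tau(H)n$ by choosing lifts with $\pi_{2}(g)=id$, but such lifts exist only for $n\in\pi_{1}\left(\Gamma\cap\left(L_{1}\times\left\{ id\right\} \right)\right)$, which is in general much smaller than $N_{1}$. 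Since both the inequality and the a.e.\ finiteness in your transport computation rest on this ``at most one good position'' count, and you explicitly defer the organisation of the lifts and the absorption of the event failures into the indicator as the technical heart, the proof is incomplete at its central step.

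For comparison, the paper does not transport mass along the orbit equivalence relation: it conditions on $H_{1}$, takes the regular conditional distribution of the coset $\varphi_{H}(\pi_{2}(B))$ given $H_{1}$ for $H$ distributed according to $\mu\left(\cdot|\left\{ \pi_{2}(B)\subseteq\pi_{2}(H)\right\} \right)$, and uses the invariance of this conditional law under conjugation of the coset by elements of $N_{1}$ (\cite[Lemma 3.3]{rigidstab}). Equidistribution over ${\rm Cl}_{N_{1}}\left(\pi_{1}(B)H_{1}\right)$ is thus a statement about conditional measures, not about individual orbit positions; it immediately gives the bound $1/\left|{\rm Cl}_{N_{1}}\left(\pi_{1}(B)H_{1}\right)\right|$ on the conditional probability of the unique coset value compatible with $B\subseteq H$, hence the a.e.\ finiteness of the class and, after integrating over the event, (\ref{eq:conj}) (you are right that $\pi_{2}(A)$ there is a typo for $\pi_{2}(B)$). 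To complete your route you would have to prove precisely such an invariance statement for the conditional law --- this is the content of the cited lemma and is where the product structure actually does its work; it does not follow formally from the mass transport principle as you have set it up.
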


Lemma \ref{E1} and \ref{E2} can be used in conjunction as follows.
Start with a pair of open sets $U,V$ with $U\cap V=\emptyset$ and
$P\subset\bar{\Gamma}_{U\to V}$ such that $\mu\left(\bar{H}_{U\to V}\cap P\neq\emptyset\right)>0$.
Then Lemma \ref{E1} provides information on $\bar{H}_{U\to U}$,
and moreover, those $H$ with large index $\left|R_{\Gamma}(U):\left(\bar{H}_{U\to U}\cap R_{\Gamma}\left(U\right)\right)\right|$
make small contribution to the probability $\mu\left(\bar{H}_{U\to V}\cap P\neq\emptyset\right)$.
Next consider the induced IRS in $\Gamma_{U\to U},$which is a subgroup
of the product $L_{1}\times L_{2}$, where $L_{1}=\pi_{U}\left(\Gamma_{U\to U}\right)$
and $L_{2}=\pi_{U^{c}}\left(\Gamma_{U\to U}\right)$. Then Lemma \ref{E2}
provides information on sizes of conjugacy classes in the quotient
group $\bar{H}_{U\to U}/R_{H}(U)$. Such information can be useful
towards showing that $R_{H}(U)$ must contain certain subgroups. 

Given a non-discrete t.d.l.c.\@ group, to apply such estimates towards
understanding its IRSs, one first needs to choose a collection of
finite sub-quotients and consider the induced IRSs. For Neretin groups,
unlike countable groups discussed in \cite{rigidstab}, Lemma \ref{E1}
and \ref{E2} applied to induced IRSs are useful, but far from being
sufficient to conclude containment of rigid stabilizers. We will need
additional probability estimates in the finite sub-quotients in the
next sections. Such estimates heavily depend on the properties of
finite symmetric groups. 

The rest of this section is devoted to the proof of the two lemmas.
We follow notations of regular conditional distributions in the book
\cite[Chapter V.8]{Parthasarathy}. Let $(X,\mathcal{B})$, $(Y,\mathcal{C})$
be two Borel spaces, $\mathbb{P}$ a probability measure on $\mathcal{B}$
and $\pi:X\to Y$ a measurable map. Let $\mathbb{Q}=\mathbb{P}\circ\pi^{-1}$
be probability measure on $\mathcal{C}$ which is the pushforward
of $\mathbb{P}$. A \emph{regular conditional distribution} given
$\pi$ is a mapping $y\mapsto\mathbb{P}(y,\cdot)$ such that 

(i) for each $y\in Y$, $\mathbb{P}(y,\cdot)$ is a probability measure
on $\mathcal{B}$;

(ii) there exists a set $N\in\mathcal{C}$ such that $\mathbb{Q}(N)=0$
and for each $y\in Y\setminus N$, $\mathbb{P}(y,X\setminus\pi^{-1}(\{y\}))=0$;

(iii) for any $A\in\mathcal{B}$, the map $y\mapsto\mathbb{P}(y,A)$
is $\mathcal{C}$-measurable and 
\[
\mathbb{P}(A)=\int_{Y}\mathbb{P}(y,A)d\mathbb{Q}(y).
\]
We will refer to these three items as properties (i),(ii),(iii) of
a regular conditional distribution. 

Recall that a measure space $(X,\mathcal{B})$ is called a standard
Borel space if it is isomorphic to some Polish space equipped with
the Borel $\sigma$-field. It is classical that if $(X,\mathcal{B})$
and $(Y,\mathcal{C})$ are standard Borel spaces and $\pi:X\to Y$
is measurable, then there exists such a regular conditional distribution
$y\mapsto\mathbb{P}(y,\cdot)$ with properties (i),(ii),(iii); and
moreover it is unique: if $\mathbb{P}'(y,\cdot)$ is another such
mapping, then $\{y:\ \mathbb{P}'(y,\cdot)\neq\mathbb{P}(y,\cdot)\}$
is a set of $\mathbb{Q}$-measure $0$, see \cite[Theorem 8.1]{Parthasarathy}. 

In the proofs below, the outline is the same as in \cite{rigidstab}.
We keep track of the subgroup index and conjugacy class sizes which
appear in the argument, which naturally lead to the bounds stated
in Lemma \ref{E1} and \ref{E2}. 

\begin{proof}[Proof of Lemma \ref{E1}]

For $U,V$ such that $\mu(\Omega_{U,V})=0$, the statement of the
lemma is trivially true. Take a pair of $U,V$ such that $\mu(\Omega_{U,V})>0$
and consider the random variables $H_{U\to V}$, $\bar{H}_{U\to V}$
and $\bar{H}_{U\to U}$ as defined in (\ref{eq:SHU}), (\ref{eq:BSHU}).
Denote by $\mathbb{P}_{U,V}^{\mu}(\bar{H}_{U\to U},\cdot)$ the regular
conditional distribution of $\left(\bar{H}_{U\to V},\bar{H}_{U\to U}\right)$
given $\bar{H}_{U\to U}$, where $H$ has distribution $\mu_{U,V}=\mu\left(\cdot|\Omega_{U\to V}\right)$
on $\Omega_{U\to V}$. 

Since $\bar{H}_{U\to V}$ is a coset of $\bar{H}_{U\to U}$ and $\Gamma$
is countable, we have that $\mathbb{P}_{U,V}^{\mu}(\bar{H}_{U\to U},\cdot)$
is a probability measure on a countable set. Conjugation invariance
of $\mu$ implies that any $g\in G_{U\to V}$ and $\gamma\in R_{\Gamma}(V)$,
\begin{equation}
\mathbb{P}_{U,V}^{\mu}\left(\bar{H}_{U\to U},\left\{ \left(\bar{H}_{U\to U}g|_{U},\bar{H}_{U\to U}\right)\right\} \right)=\mathbb{P}_{U,V}^{\mu}\left(\bar{H}_{U\to U},\left\{ \left(\bar{H}_{U\to U}g|_{U}\gamma|_{V},\bar{H}_{U\to U}\right)\right\} \right),\label{eq:invariant}
\end{equation}
see \cite[Lemma 2.3]{rigidstab}. For $\mu$-a.e. $H\in\Omega_{U,V}$,
there must exist a coset $\bar{H}_{U\to U}\sigma|_{U}$, $\sigma\in G_{U\to V}$
depending on $\bar{H}_{U\to U}$, such that $\mathbb{P}_{U,V}^{\mu}\left(\bar{H}_{U\to U},\left\{ \left(\bar{H}_{U\to U}\sigma|_{U},\bar{H}_{U\to U}\right)\right\} \right)>0$.
If the number of right cosets $\bar{H}_{U\to U}\sigma|_{U}\gamma|_{V}$,
where $\gamma$ is taken over elements of $R_{\Gamma}(V)$, is infinite,
then the probability measure $\mathbb{P}_{U,V}^{\mu}(\bar{H}_{U\to U},\cdot)$
cannot be invariant under right multiplication as in (\ref{eq:invariant}).
Therefore there are only finitely many cosets of $\bar{H}_{U\to U}\sigma|_{U}$
in this collection. Denote by $\ell\left(\bar{H}_{U\to U}\sigma|_{U}\right)$
the number of cosets 
\begin{equation}
\ell\left(\bar{H}_{U\to U}\sigma|_{U}\right)=\left|\left\{ \bar{H}_{U\to U}\sigma|_{U}\gamma|_{V},\gamma\in R_{\Gamma}(V)\right\} \right|.\label{eq:numell}
\end{equation}
In other words, there are $\ell=\ell\left(\bar{H}_{U\to U}\sigma|_{U}\right)$
representatives $\gamma_{1},\ldots,\gamma_{\ell}$ in $R_{\Gamma}(V)$
such that for any $\gamma\in R_{\Gamma}(V)$, we have $\bar{H}_{U\to U}\sigma|_{U}\gamma|_{V}=\bar{H}_{U\to U}\sigma|_{U}\gamma_{k}|_{V}$
for exactly one $k\in\{1,\ldots,\ell\}$. It follows that for any
$\gamma\in R_{\Gamma}(V)$, there is a representative $\gamma_{k}$,
$k\in\{1,\ldots,\ell\}$, such that $\bar{H}_{U\to U}$ contains $\sigma|_{U}(\gamma\gamma_{k}^{-1})|_{V}\sigma|_{U}^{-1}$.
Consider the subgroup $R_{1}$ of $R_{\Gamma}(V)$ generated by the
collection $\gamma\gamma_{k}^{-1}$, where $\gamma\in R_{\Gamma}(V)$
and $\gamma_{k}$ is its corresponding representative. It's clear
by definition of $R_{1}$ that $\cup_{j=1}^{\ell}R_{1}\gamma_{j}=R_{\Gamma}(V)$,
therefore $R_{1}$ is a subgroup of $R_{\Gamma}(V)$ with index at
most $\ell$. Recall that $\sigma$ maps $U$ to $V$, therefore $\sigma R_{\Gamma}(V)\sigma^{-1}=R_{\Gamma}(U)$.
Let $T_{1}=\sigma R_{1}\sigma^{-1}$, it is a subgroup of $R_{\Gamma}(U)$
of index at most $\ell$. Elements of $T_{1}$ satisfy the property
that $\bar{H}_{U\to U}=\bar{H}_{U\to U}\gamma|_{U}$, in other words,
$\pi_{U}(T_{1})\le\bar{H}_{U\to U}$. Note that we have bounds on
the index
\begin{align}
\left|R_{\Gamma}(U):\left(R_{\Gamma}(U)\cap\bar{H}_{U\to U}\right)\right| & \le\left|\pi_{U}\left(R_{\Gamma}(V)\right):\pi_{U}\left(T_{1}\right)\right|\nonumber \\
 & \le\left|R_{\Gamma}(V):T_{1}\right|\nonumber \\
 & \le\ell\left(\bar{H}_{U\to U}\sigma|_{U}\right).\label{eq:index2}
\end{align}
The first statement on finite index follows. Now we proceed to prove
(\ref{eq:index}). Take any $g\in\Gamma_{U\to V}$. Then by property
(iii) of regular conditional probability, we have 
\begin{multline*}
\mathbb{P}_{\mu}\left(\bar{H}_{U\to V}\cap A\neq\emptyset\right)\\
=\mu\left(\Omega_{U,V}\right)\mathbb{E}_{\mu_{U,V}}\left[\sum_{\left(\bar{H}_{U\to U}g|_{U}\right)\cap A\neq\emptyset}\mathbb{P}_{U,V}^{\mu}\left(\bar{H}_{U\to U},\left\{ \left(\bar{H}_{U\to U}g|_{U},\bar{H}_{U\to U}\right)\right\} \right)\right],
\end{multline*}
where the summation is over those cosets in $\left\{ \bar{H}_{U\to U}g|_{U}:g\in\Gamma_{U\to V}\right\} $
with non-empty intersection with $A$. By the same reasoning as in
the previous paragraph, translation invariance (\ref{eq:invariant})
implies that for each coset,
\begin{multline*}
\mathbb{P}_{U,V}^{\mu}\left(\bar{H}_{U\to U},\left\{ \left(\bar{H}_{U\to U}g|_{U},\bar{H}_{U\to U}\right)\right\} \right)\\
\le\frac{1}{\ell(\bar{H}_{U\to U}g|_{U})}\le\frac{1}{\left|R_{\Gamma}(U):\left(R_{\Gamma}(U)\cap\bar{H}_{U\to U}\right)\right|},
\end{multline*}
where $\ell(\bar{H}_{U\to U}g|_{U})$ is the number of cosets defined
in (\ref{eq:numell}) and in the last step we plugged in (\ref{eq:index2}).
Since the cosets are disjoint, there are at most $|A|$ of them that
intersect $A$. It follows that 
\begin{multline*}
\mathbb{P}_{\mu}\left(\bar{H}_{U\to V}\cap A\neq\emptyset\right)\\
\le\mu\left(\Omega_{U,V}\right)\mathbb{E}_{\mu_{U,V}}\left[{\bf 1}_{\left\{ \left|R_{\Gamma}(U):\left(R_{\Gamma}(U)\cap\bar{H}_{U\to U}\right)\right|\le|A|\right\} }+\frac{|A|{\bf \cdot1}_{\left|R_{\Gamma}(U):\left(R_{\Gamma}(U)\cap\bar{H}_{U\to U}\right)\right|>|A|}}{\left|R_{\Gamma}(U):\left(R_{\Gamma}(U)\cap\bar{H}_{U\to U}\right)\right|}\right].
\end{multline*}
The statement follows. 

\end{proof}

\begin{proof}[Proof of Lemma \ref{E2}]

Denote by $\text{\ensuremath{\mathcal{A}_{B}} the event \ensuremath{\{H\in{\rm Sub}(\Gamma):\ \pi_{2}(B)\subset\pi_{2}(H)\}}}.$
If $\mu\left(\mathcal{A}_{B}\right)=0$ then the statement is trivially
true. We may assume $\mu\left(\mathcal{A}_{B}\right)>0$. Recall that
for any $H<L_{1}\times L_{2}$, there is an isomorphism $\varphi_{H}:\pi_{2}(H)/H_{2}\to\pi_{1}(H)/H_{1}$,
given by the map $h_{2}\mapsto\{h_{1}\in\pi_{1}(H):(h_{1},h_{2})\in H\}$,
see \cite[Fact 3.1]{rigidstab}. We refer to $\varphi$ as the paring
in $H$ between the two coordinates. Denote by $\mathbb{P}_{B}^{\mu}$
the regular conditional distribution of $\left(\varphi_{H}(\pi_{2}(B)),H_{1}\right)$
given the random variable $H_{1}$, where $H$ has distribution $\mu\left(\cdot|\mathcal{A}_{B}\right)$
on $\mathcal{A}_{B}$. Then the conjugation invariance property of
$\mu$ implies that 
\[
\mathbb{P}_{B}^{\mu}\left(H_{1},\left\{ \left(\pi_{1}(B)H_{1},H_{1}\right)\right\} \right)=\mathbb{P}_{B}^{\mu}\left(H_{1},\left\{ \left(g^{-1}\pi_{1}(B)H_{1}g,H_{1}\right)\right\} \right),
\]
for any $g\in N_{1}$, see \cite[Lemma 3.3]{rigidstab}. It follows
that 
\[
\mathbb{P}_{B}^{\mu}\left(H_{1},\left\{ \left(\pi_{1}(B)H_{1},H_{1}\right)\right\} \right)\le\frac{1}{{\rm Cl}_{N_{1}}\left(\pi_{1}(B)H_{1}\right)}.
\]
In order for $B$ to be contained in $H$, it is necessarily that
$\pi_{2}(B)$ is paired with $\pi_{1}(B)$ under $\varphi_{H}$. Thus,
by property (iii) of regular conditional distribution, we have 
\begin{align*}
\mathbb{P}_{\mu}(B & \subset H)\le\mathbb{P}(H\in\mathcal{A}_{B}\mbox{ and }\varphi_{H}(\pi_{2}(B))=\pi_{1}(B)H_{1})\\
 & =\mu\left(\mathcal{A}_{B}\right)\mathbb{E}_{\mu(\cdot|\mathcal{A}_{B})}\left[\mathbb{P}_{B}^{\mu}\left(H_{1},\left\{ \left(\pi_{1}(B)H_{1},B\right)\right\} \right)\right]\\
 & \le\mathbb{E}_{\mu}\left[\frac{1}{\left|{\rm Cl}_{N_{1}}\left(\pi_{1}(B)H_{1}\right)\right|}\mathbf{1}_{\mathcal{A}_{B}}(H)\right].
\end{align*}

\end{proof}

\section{Preliminaries on Neretin-type groups\label{sec:Preliminaries}}

Terminologies and notations in this section follow \cite{Lederle}. 

Let $\mathcal{T}=\mathcal{T}_{d+1}$ be a (unrooted) regular tree
of degree $d+1$. Denote by ${\rm Aut}(\mathcal{T})$ the group of
automorphisms of $\mathcal{T}$, equipped with the topology of pointwise
convergence. We fix, once and for all, a reference point $v_{0}\in\mathcal{T}$,
and a legal colouring of (geometric) edges of $\mathcal{T}.$ Recall
that a legal colouring is a map ${\rm col}$ from (geometric) edges
of $\mathcal{T}$ to the set $D=\{0,1,\ldots,d\}$, such that at every
vertex the edges incident to it have different colours. 

Denote by $\partial\mathcal{T}$ the boundary of $\mathcal{T}$, which
consists of all infinite geodesic rays starting at $v_{0}$. Given
a vertex $v\in\mathcal{T},$denote by $C_{v}$ the subset of $\partial\mathcal{T}$
which consists of infinite geodesic rays that starts at $v_{0}$ and
passes through $v$. As usual, $\partial\mathcal{T}$ is equipped
with the topology generated by the basis $\left\{ C_{v}\right\} _{v\in\mathcal{T}}$. 

Let $A$ be a finite subtree of $\mathcal{T}.$ The subtree $A$ is
called \emph{complete} if it contains the reference point $v_{0}$
and if a vertex $v\in A$ is not a leaf, then all of its children
are contained in $A$. Denote by $\partial A$ the set of leaves of
$A$. By $\mathcal{T}\setminus A$ we mean the subgraph $\coprod_{v\in\partial A}\mathcal{T}_{v}$,
that is the disjoint union (forest) of subtrees rooted at leaves of
$A$. 

An \emph{almost automorphism} of $\mathcal{T}$ is represented by
a triple $(A,B,\varphi),$ where $A,B\subseteq\mathcal{T}$ are complete
finite subtrees such that $\left|\partial A\right|=\left|\partial B\right|$,
and $\varphi:\mathcal{T}\setminus A\to\mathcal{T}\setminus B$ is
a forest isomorphism. Two such triples are equivalent if up to enlarging
the subtrees $A,B$ they coincide. An almost automophism is the equivalence
class of such a representing triple. An almost automorphism of $\mathcal{T}$
induces a homeomorphism of $\partial\mathcal{T}$, called a \emph{spheromorphism}
of $\partial\mathcal{T}$. The Neretin group $\mathcal{N}_{d}$ is
defined as the group of all almost automorphism of $\mathcal{T}$.
Equivalently, $\mathcal{N}_{d}$ is the group of all spheromorphisms
of $\partial\mathcal{T}.$ For more detailed exposition see for example
\cite{GL-Neretin}. 

The group $\mathcal{N}_{d}$ can be viewed as the topological full
group of ${\rm Aut}(\mathcal{T})\curvearrowright\partial\mathcal{T}$.
Given a group $G$ acting on a topological space $X$, the \emph{topological
full group} of $G\curvearrowright X$ consists of all homeomorphisms
$\varphi$ of $X$ such that for any $x\in X$, there exists a neighborhood
$U$ of $x$ and an element $g\in G$ such that $\varphi|_{U}=g|_{U}$.
The topology on $\mathcal{N}_{d}$ is defined such that the inclusion
${\rm Aut}(\mathcal{T})\hookrightarrow\mathcal{N}_{d}$ is open and
continuous.

In \cite{caprace-de-dedts}, it is shown that $\mathcal{N}_{d}$ is
compactly generated: indeed it contains a dense copy of the Higman-Thompson
group $V_{d,d+1}$, which is finitely generated. We now describe the
embedded Higman-Thompson group following \cite{caprace-de-dedts}.
For general reference on Higman-Thompson groups, see for instance
\cite{brown}. Let $\mathsf{\mathcal{T}}_{d,k}$ be the rooted tree
where the root $v_{0}$ has $k$ children and all the other vertices
have $d$ children. For each vertex $v$, fix a local order $<_{v}$,
which is a total order on the children of $v$. Such a collection
of total orders $\left\{ <_{v}\right\} $ is referred to as a plane
order, as it specifies an embedding of the tree $\mathsf{\mathcal{T}}_{d.k}$
in $\mathbb{R}^{2}$, where the root $v_{0}$ is drawn at the origin,
and the children of a vertex are drawn below the parent, arranged
from left to right according to the order. An almost automorphism
is \emph{locally order preserving} if it can be represented by a triple
$(A,B,\varphi)$ where for each vertex $v\in\mathcal{T}_{d,k}\setminus A$,
the restriction of $\varphi$ on the children of $v$ preserves the
order. The subgroup of ${\rm AAut}(\mathcal{T}_{d,k})$ which consists
of locally order preserving elements is the \emph{Higman-Thompson
group} $V_{d,k}$. Returning to the $(d+1)$-regular tree $\mathcal{T},$
we have that a plane order on $\mathcal{T}$ gives an embedding of
the group $V_{d,d+1}$ as a dense subgroup of $\mathcal{N}_{d}$. 

\emph{Coloured Neretin groups} are introduced and investigated in
\cite{Lederle}. Take a closed subgroup $G<{\rm Aut}(\mathcal{T})$
and let $\mathsf{F}(G)$ be the topological full group of the action
$G\curvearrowright\partial\mathcal{T}$. When $G$ has Tits' independence
property, there exists a unique group topology on $\mathsf{F}(G)$
such that the inclusion $G\hookrightarrow\mathsf{F}(G)$ is open and
continuous, see \cite[Proposition 2.22]{Lederle}. Equipped with this
topology, $\mathsf{F}(G)$ is a t.d.l.c.\@ group containing $G$
as an open subgroup. 

Consider the case where $G$ is a universal group acting on $\mathcal{T}$
with a prescribed local action in the sense of Burger-Mozes \cite{Burger-Mozes}.
Recall that we have fixed a legal colouring of the tree $\mathcal{T}$.
Given a subgroup $F<{\rm Sym}(D)$, the \emph{Burger-Mozes' universal
group} $U(F)$ is defined as the subgroup of ${\rm Aut}(\mathcal{T})$
which consists of elements whose local action at every vertex is in
$F$. More precisely, at any vertex $v$ of $\mathcal{T}$, an automorphism
$g\in{\rm Aut}(\mathcal{T})$ induces a bijection $g_{v}:E(v)\to E(g(v))$,
where $E(v)$ denotes edges incident to $v$. The bijection $g_{v}$
gives rise to a local permutation of colours given by $\sigma(g,v)={\rm col}_{v}^{-1}\circ g_{v}\circ{\rm col}_{g(v)}$
in ${\rm Sym}(D)$. The group $U(F)$ consists of all automorphisms
$g\in{\rm Aut}(\mathcal{T})$ such that $\sigma(g,v)\in F$ for all
$v\in\mathcal{T}$. 

Denote by $\mathcal{N}_{F}$ the topological full group of the action
$U(F)\curvearrowright\partial\mathcal{T}$, equipped with the unique
group topology such that $U(F)\hookrightarrow\mathcal{N}_{F}$ is
open and continuous. We refer to $\mathcal{N}_{F}$ as the \emph{coloured
Neretin group associated with} $F$. Elements of $\mathcal{N}_{F}$
are called $U(F)$-almost automorphisms and each element $g\in\mathcal{N}_{F}$
can be represented by a triple $\left(A,B,\varphi\right)$, where
$A,B$ are complete finite subtrees with $|\partial A|=\left|\partial B\right|$
and $\varphi$ is a forest isomorphism $\mathcal{T}\setminus A\to\mathcal{T}\setminus B$
such that for each leaf $v\in\partial A$, there exists an element
$h_{v}\in U(F)$ such that $\varphi|_{\mathcal{T}_{v}}=h_{v}|_{\mathcal{T}_{v}}.$ 

Given $F$, denote by $\left\{ D^{(0)},\ldots,D^{(l)}\right\} $ the
$F$-orbits in $D=\{0,1,\ldots,d\}$. The group $F$ induces a labeling
on the vertices of the tree $\mathcal{T}$ except at the root $v_{0}$:
for any vertex $v\neq v_{0}$, define $\ell_{F}(v)$ as the $F$-orbit
of ${\rm col}(e)$, where $e$ is the edge connecting $v$ to its
parent. Suppose a plane order on $\mathcal{T}$ is given, we say an
almost automorphism in $V_{d,d+1}$ is $\ell_{F}$-\emph{label preserving
}if it can be represented by a triple $(A,B,\varphi)$ where $\varphi$
is a locally order preserving forest isomorphism and for each leaf
$v$ of $A$, $\ell_{F}(v)=\ell_{F}(\varphi(v))$.

By \cite[Proposition 3.14]{Lederle}, there exists a plane order on
the tree $\mathcal{T}$ which is compatible with the vertex labeling
$\ell_{F}$ such that the subgroup of almost automorphisms that are
locally order preserving and $\ell_{F}$-label preserving is a subgroup
of $\mathcal{N}_{F}$. Denote this group by $V_{F}$, $V_{F}=V_{d,d+1}\cap\mathcal{N}_{F}$.
By \cite[Theorem 4.1]{Lederle}, $V_{F}$ is dense in $\mathcal{N}_{F}$.
For a given $F$, we fix such a compatible plane order. The group
$V_{F}$ is analogous to the Higman-Thompson groups, its isomorphism
class only depends on the size of $F$-orbits in $D$. By \cite[Theorem 3.9]{Lederle}
$V_{F}$ can be identified as the topological full group of a one-sided
irreducible shift of finite type, which is introduced by Matui in
\cite{Matui2015}. 

\section{Induced IRSs and events in sub-quotients\label{sec:Induced}}

Let $A$ be a complete finite subtree of $\mathcal{T}.$ Recall that
$A$ is called complete if it contains the root $v_{0}$ and if a
vertex $v\in A$ is not a leaf, then all of its children are contained
in $A$. Denote by $B_{n}(A)$ the subtree with vertices within distance
$n$ to $A$, in other words it is the subtree which contains $A$
and trees of height $n$ rooted at the leaves of $A$. 

Denote by $O_{F}^{A}(n)$ the subgroup which consists of elements
in $\mathcal{N}_{F}$ which can be represented by a triple $\left(B_{n}(A),B_{n}(A),\varphi\right)$,
where $\varphi$ is a forest isomorphism $\mathcal{T}\setminus B_{n}(A)\to\mathcal{T}\setminus B_{n}(A)$
such that for each leaf $v\in\partial B_{n}(A)$, there exists an
element $h_{v}\in U(F)$ such that $\varphi|_{\mathcal{T}_{v}}=h_{v}|_{\mathcal{T}_{v}}.$
It is an open compact subgroup of $\mathcal{N}_{F}$. Let $O_{F}^{A}$
be the increasing union
\[
O_{F}^{A}:=\bigcup_{n=0}^{\infty}O_{F}^{A}(n).
\]
For instance, when $A=\{v_{0}\}$, the corresponding group $O_{F}^{\{v_{0}\}}$
is the open subgroup $O$ considered in \cite{BCGM,Lederle}. 

The group $O_{F}^{A}(n)$ permutes the leaves of the subtree $B_{n}(A)$
and the kernel of this action is the pointwise stabilizer of $B_{n}(A)$
in $U(F)$, which we denote by
\[
U_{F}^{A}(n):=\{g\in U(F):\ x\cdot g=x\mbox{ for all }x\in B_{n}(A)\}.
\]
Note that $U_{F}^{A}(n)$ is open and compact. Denote by $S_{F}^{A}(n)$
the quotient group $O_{F}^{A}(n)/U_{F}^{A}(n)$ and $\pi_{n}$ the
projection
\[
\pi_{n}:O_{F}^{A}(n)\to S_{F}^{A}(n).
\]
When $F$ is transitive on $D$, the quotient $S_{F}^{A}(n)$ is isomorphic
to the symmetric group ${\rm Sym}(\partial B_{n}(A))$. For general
$F$, recall that we denote by $\left\{ D^{(0)},\ldots,D^{(l)}\right\} $
the $F$-orbits on $D$ and $\ell_{F}$ be the labeling associated
to $F$-orbits on $\mathcal{T}\setminus\{v_{0}\}$. Then the quotient
group $S_{F}^{A}(n)$ is isomorphic to $\prod_{i=0}^{l}{\rm Sym}\left(D_{n,A}^{(i)}\right)$,
where $D_{n,A}^{(i)}=\{v\in\partial B_{n}(A):\ \ell_{F}(v)=D^{(i)}\}$. 

Given a finite complete subtree $A$ and two distinct leaves $u,v$
of $A$ with $\ell_{F}(u)=\ell_{F}(v)$, consider the event in ${\rm Sub}(\mathcal{N}_{F})$
\begin{equation}
\Theta_{u,v}^{A}:=\left\{ H\in{\rm Sub}\left(\mathcal{N}_{F}\right):\ \exists h\in H\cap O_{F}^{A}(0)\mbox{ s.t.}\ v=u\cdot\pi_{0}(h)\right\} .\label{eq:Auv}
\end{equation}
Since the set of $\left\{ g\in O_{F}^{A}(0):\ v=u\cdot\pi_{0}(g)\right\} $
is open in $\mathcal{N}_{F}$, we have that $\Theta_{u,v}^{A}$ is
an open subset in ${\rm Sub}\left(\mathcal{N}_{F}\right)$. This event
is similar to the event $\Omega_{U,V}$ considered in Section \ref{sec:Effective}. 

Let $\mu$ be an ergodic IRS of $\mathcal{N}_{F}$, $\mu\neq\delta_{\{id\}}$.
We will verify that for $\mu$-a.e. $H$, there exists a finite complete
tree $A$ and two distinct leaves $u,v\in\partial A$ such that $H\in\Theta_{u,v}^{A}$,
see Lemma \ref{cover}. Thus in what follows we focus on these complete
finite subtree $A$ and leaves $u\neq v$ in $\partial A$ such that
$\mu\left(\Theta_{u,v}^{A}\right)>0$. 

Given a leaf $u$ of a finite complete subtree $A$, denote by $C_{u}^{n}$
the set of vertices $x\in\mathcal{T}$ such that $u$ lies on the
geodesic from the root $v_{0}$ to $x$ and $d_{\mathcal{T}}(u,x)=n$.
In other words, $C_{u}^{n}$ consists of vertices of distance $n$
to $u$ in the subtree $\mathcal{T}_{u}$ rooted at $u$. 

Now suppose $A,u,v$ are such that $\mu\left(\Theta_{u,v}^{A}\right)>0$.
Expand the subtree $A$ to $B_{n}(A)$ and consider in the finite
group $S_{F}^{A}(n)$ the event 
\begin{equation}
\Theta_{u,v}^{A,n}:=\left\{ H\le S_{F}^{A}(n):\ \exists h\in H\cap\pi_{n}\left(O_{F}^{A}(0)\right)\mbox{ s.t. }C_{v}^{n}=C_{u}^{n}\cdot h\right\} .\label{eq:Auvn}
\end{equation}
It's clear by definitions of the events that for any $n\ge0$,
\[
\Theta_{u,v}^{A}\subseteq\left\{ H\in{\rm Sub}\left(\mathcal{N}_{F}\right):\ \pi_{n}\left(H\cap O_{F}^{A}(n)\right)\in\Theta_{u,v}^{A,n}\right\} .
\]
Note that the maps $H\mapsto H\cap O_{F}^{A}(n)$ and $H\mapsto\ \pi_{n}\left(H\cap O_{F}^{A}(n)\right)$
are continuous. Denote by $\bar{\mu}_{n}^{A}$ the induced IRS in
the finite group $S_{F}^{A}(n)$, that is, the pushforward of $\mu$
under $H\mapsto\pi_{n}\left(H\cap O_{F}^{A}(n)\right)$. Suppose $\mu\left(\Theta_{u,v}^{A}\right)>0$,
then we have for all $n$, 
\[
\bar{\mu}_{n}^{A}\left(\Theta_{u,v}^{A,n}\right)\ge\mu\left(\Theta_{u,v}^{A}\right)>0.
\]
This uniform lower bound, independent of $n$, on the probability
of containing a specific kind of almost automophisms, is the starting
point of our argument. We will show, by combining general lemmas in
Section \ref{sec:Effective} and properties of finite symmetric groups,
that this lower bound forces the finitary IRS $\bar{\mu}_{n}^{A}$
to charge groups that contain a \textquotedbl large\textquotedbl{}
alternating subgroup of $S_{F}^{A}(n)$. Proposition \ref{contain}
will be shown by applying the Borel-Cantelli lemma to combine the
estimates in each level $n$.

\section{Proof of Theorem \ref{main} given Proposition \ref{contain} \label{sec:Proof-of-Theorem}}

In this section we explain how to deduce Theorem \ref{main} from
Proposition \ref{contain}. Recall that in the previous section, we
have defined the countable collection of events $\left\{ \Theta_{u,v}^{A}\right\} $
as in (\ref{eq:Auv}), where $u,v$ goes over all pairs of distinct
vertices in $\partial A$ and $A$ goes over all finite complete subtrees
of $\mathcal{T}$. 

To proceed, we first show that if $\mu$ is an ergodic IRS of $\mathcal{N}_{F}$
such that $\mu\neq\delta_{\{id\}}$, then for $\mu$-a.e. $H$, there
exists some $A$ such that $H\cap O_{F}^{A}\neq\{id\}$. When $H$
is of finite co-volume, this is obvious because $O_{F}^{A}$ has infinite
Haar measure. For essentially the same reason, an ergodic IRS that
is concentrated at the trivial group cannot intersect all $O_{F}^{A}$
trivially:

\begin{lemma}\label{cover}

Let $\mu$ be an ergodic IRS of $\mathcal{N}_{F}$, $\mu\neq\delta_{\{id\}}$.
Then for $\mu$-a.e. $H$, there exists a finite complete subtree
$A$ such that $H\cap O_{F}^{A}\neq\{id\}$.

\end{lemma}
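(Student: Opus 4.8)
The plan is to argue by contradiction: suppose $\mu \neq \delta_{\{id\}}$ is ergodic, yet the event $E = \{H : H \cap O_F^A = \{id\} \text{ for all finite complete subtrees } A\}$ has positive measure. Since the collection of finite complete subtrees is countable and $E$ is the intersection over all such $A$ of the events $\{H : H \cap O_F^A = \{id\}\}$, and since each $O_F^A$ is an open subgroup (indeed an increasing union of compact open subgroups), $E$ is a Borel set. Moreover $E$ is conjugation-invariant: conjugating by $g \in \mathcal{N}_F$ sends $O_F^A$ to $O_F^{g(A)}$ (up to the usual enlargement of subtrees), and $g$ permutes the finite complete subtrees, so $g^{-1} H g \cap O_F^A = \{id\}$ for all $A$ iff $H \cap O_F^A = \{id\}$ for all $A$. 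By ergodicity, $\mu(E) \in \{0,1\}$, so we are assuming $\mu(E) = 1$; equivalently, $\mu$-a.e.\ $H$ meets every $O_F^A$ trivially.

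Next I would exploit that the groups $O_F^A$ exhaust $\mathcal{N}_F$ in a strong sense: every compact subset of $\mathcal{N}_F$, and in particular every compact open subgroup such as $U(F)$ and more generally $O_F^A(n)$, is contained in some $O_F^{A'}$ — one just enlarges the subtree enough to contain the support of all the relevant triples. Since $\mu \neq \delta_{\{id\}}$, there is a positive-measure set of $H$ with $H \neq \{id\}$, so for $\mu$-a.e.\ such $H$ there is a nontrivial $h \in H$. The element $h$, being an element of the t.d.l.c.\ group $\mathcal{N}_F$, lies in some compact open subgroup, hence in some $O_F^{A}$; thus $H \cap O_F^A \ni h \neq id$, contradicting $\mu(E)=1$. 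This already gives the conclusion, and in fact shows the hypothesis of ergodicity is only needed to pass from "positive measure" to "measure one" is not even required here — but I will keep the statement as in the Lemma. The one subtlety to address carefully: I should phrase the union $O_F^A = \bigcup_n O_F^A(n)$ and the fact that $\mathcal{N}_F = \bigcup_A O_F^A$, where $A$ ranges over finite complete subtrees, which follows because any element of $\mathcal{N}_F$ is represented by some triple $(A,B,\varphi)$ and can be conjugated — or rather, directly, any $h$ fixing the complement of $B_n(A)$ as a tree isomorphism for a suitable $A$ lies in $O_F^A$; combined with the representing-triple description, every $h \in \mathcal{N}_F$ lies in $O_F^A$ for the subtree spanned by the relevant leaves.

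The only genuine content, and the place to be careful, is the claim that $\mathcal{N}_F = \bigcup_A O_F^A$. This is where the structure of almost automorphisms enters: given $h \in \mathcal{N}_F$ represented by $(A_1, A_2, \varphi)$, let $A$ be a finite complete subtree containing both $A_1$ and $A_2$; then replacing the representative by an equivalent one on $A$, $h$ is represented by a triple $(A, A, \psi)$ with $\psi$ a forest isomorphism $\mathcal{T}\setminus A \to \mathcal{T} \setminus A$, so $h \in O_F^A(0) \subseteq O_F^A$. Hence every element of $\mathcal{N}_F$ — in particular every nontrivial element of a random $H$ — lies in some $O_F^A$. Combining this with the positive-measure set $\{H \neq \{id\}\}$ (which is nonempty precisely because $\mu \neq \delta_{\{id\}}$) and countable additivity over the countably many subtrees $A$ yields: the event $\bigcup_A \{H : H \cap O_F^A \neq \{id\}\}$ has measure at least $\mu(\{H \neq \{id\}\}) > 0$; upgrading this to measure $1$ via ergodicity and conjugation-invariance of the complementary event $E$ completes the proof. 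I do not anticipate any real obstacle here — the lemma is essentially a bookkeeping argument once the exhaustion $\mathcal{N}_F = \bigcup_A O_F^A$ is in place — so I would keep the write-up short.
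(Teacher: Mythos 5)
Your proof collapses at the step you yourself identify as ``the only genuine content'': the claim that $\mathcal{N}_F=\bigcup_A O_F^A$. This is false. Given $h$ represented by $(A_1,A_2,\varphi)$, enlarging the domain tree $A_1$ to a bigger complete subtree $A$ does \emph{not} let you take the codomain tree to be $A$ as well: the equivalent representative has domain $A$ and codomain $A_2\cup\varphi(A\setminus A_1)$, and in general there is no representative of $h$ with equal domain and codomain trees. Concretely, an element of $O_F^A(n)$ permutes the finitely many cylinders $\{C_w:w\in\partial B_n(A)\}$ and acts rigidly inside each, so it can never map a cylinder $C_u$ onto a proper subcylinder $C_{u'}\subsetneq C_u$ (a union of $m$ partition cylinders cannot biject onto a union of fewer). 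But $\mathcal{N}_F$ contains plenty of such ``contracting'' almost automorphisms (the analogues of non-torsion elements of Thompson's group $V$), and none of them lies in any $O_F^A$. Worse for your strategy, the cyclic (discrete, closed) subgroup generated by one contracting element meets \emph{every} $O_F^A$ trivially, so the set $E$ you want to rule out is nonempty in ${\rm Sub}(\mathcal{N}_F)$; the lemma is not a bookkeeping statement about an exhaustion, but the assertion that an IRS other than $\delta_{\{id\}}$ cannot charge such subgroups.

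This is exactly why the paper's proof has real content: it argues by contradiction from $\mu\bigl(\bigcup_A\{H:H\cap O_F^A\neq\{id\}\}\bigr)=0$, covers ${\rm Sub}(\mathcal{N}_F)\setminus\{\{id\}\}$ by the countably many events $\{H\cap W(A,B:u,v)\neq\emptyset\}$ (which is where a representing triple with $C_u\cap C_v=\emptyset$ is used, legitimately, without forcing equal trees), observes that under the contradiction hypothesis the intersection $H\cap W(A,B:u,v)$ is a single element, and then shows that conjugation invariance of $\mu$ would force the conditional law of the component $\Psi_u^H$ to be invariant under right translation by $R_{O_F^A}(C_u)$ --- impossible since that group admits no finite right Haar measure. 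Only the very first reduction in your write-up (conjugation invariance of the union and the $0$--$1$ law from ergodicity) matches the paper; the substantive part of your argument does not work and cannot be repaired along the lines you propose, because the exhaustion it relies on simply fails.
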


\begin{proof}

Suppose $H$ is such that $H\cap O_{F}^{A}=\{id\}$ for all $A$.
It follows that for any $A,B$ with $\left|\partial A\right|=\left|\partial B\right|$,
there is at most one element in $H$ that can be represented by a
triple of the form $\left(A,B,\varphi\right)$. Indeed, if there are
two distinct elements $h_{1},h_{2}$ of this form, then $h_{1}h_{2}^{-1}$
would be a non-identity element in $H\cap O_{F}^{A}$, contradicting
$H\cap O_{F}^{A}=\{id\}$. 

Note that the union $\bigcup_{A}\left\{ H:H\cap O_{F}^{A}\neq\{id\}\right\} $,
over all finite complete subtrees $A$, is invariant under conjugation
by $\mathcal{N}_{F}$. Thus by ergodicity the $\mu$-measure of this
union is either $0$ or $1$. We argue by contradiction and assume
from now on 
\begin{equation}
\mu\left(\bigcup_{A}\left\{ H:H\cap O_{F}^{A}\neq\{id\}\right\} \right)=0.\label{eq:non-intersect}
\end{equation}

Given two finite complete subtrees $A,B$ with $|\partial A|=|\partial B|$,
a point $u\in\partial A$ and $v\in\partial B$, denote by $W(A,B:u,v)$
the set of $U(F)$-almost automorphisms that can be written as a product
$\Psi(A,B,\sigma)$, where $\Psi\in O_{F}^{A}$ with $u\cdot\Psi=u$
for all $u\in\partial A$ and $\sigma$ is a locally order preserving
forest isomorphism $\mathcal{T}\setminus A\to\mathcal{T}\setminus B$
with $u\sigma=v$. Take the collection $\left\{ W(A,B:u,v)\right\} $
where the pairs $u,v$ are such that $u\in\partial A$, $v\in\partial B$
and $C_{u}\cap C_{v}=\emptyset$; and $A,B$ go over all finite complete
subtrees with $|\partial A|=|\partial B|$. The corresponding collection
$\left\{ H\in{\rm Sub}(\mathcal{N}_{F}):H\cap W(A,B:u,v)\neq\emptyset\right\} $
form an open cover of ${\rm Sub}\left(\mathcal{N}_{F}\right)\setminus\{\{id\}\}$.
Indeed, to verify this claim, let $g\in\mathcal{N}_{F}$ be any non-identity
element and $(A_{0},B_{0},\varphi_{0})$ be a representing triple
for $g$. Then there exists disjoint clopen subsets $V_{1},V_{2}\subseteq\partial\mathcal{T}$
such that $V_{2}=V_{1}\cdot g$. Expand the trees $A_{0}$ and $B_{0}$
to sufficiently large levels, we may represent $g$ by a triple $(A_{1},B_{1},\varphi_{1})$
such that there exists a vertex $u\in\partial A_{1}$ with $C_{u}\subseteq V_{1}$.
It follows that $g\in W(A_{1},B_{1}:u,\varphi_{1}(u))$ where $C_{u}\cap C_{\varphi_{1}(u)}=\emptyset$. 

Since the cover of ${\rm Sub}\left(\mathcal{N}_{F}\right)\setminus\{\{id\}\}$
in the previous paragraph is countable, there must exist some $A,B,u,v$
such that 
\[
\mu\left(\left\{ H:H\cap W(A,B:u,v)\neq\emptyset\right\} \right)>0.
\]

Recall the fact that if $H\cap W(A,B:u,v)\neq\emptyset$ and $H\cap O_{F}^{A}=\{id\}$,
then the intersection $H\cap W(A,B:u,v)$ consists of a unique element.
In this case, we write the unique element of $H\cap W(A,B:u,v)$ as
$\left(\Psi_{x}^{H}\right)_{x\in\partial A}\sigma^{H}$, where $\left(\Psi_{x}^{H}\right)_{x\in\partial A}\in O_{F}^{A}$
and $\sigma^{H}$ is a bijection from $\partial A$ to $\partial B$
with $\sigma^{H}(u)=v$. Recall that we have assumed (\ref{eq:non-intersect}).
Denote by $\eta_{u}$ the conditional distribution of the element
$\Psi_{u}^{H}$ given that $H\cap W(A,B:u,v)\neq\emptyset$. We now
repeat the same kind of argument as in the proof of Lemma \ref{E1}.
Take any element $g\in R_{O_{F}^{B}}(C_{v})$, then under conjugation
by $g$, we have 
\begin{equation}
\Psi_{u}^{g^{-1}Hg}=\Psi_{u}^{H}\sigma g\sigma^{-1}.\label{eq:trans}
\end{equation}
Note that the set $\left\{ H:H\cap W(A,B:u,v)\neq\emptyset\right\} $
is invariant under conjugation by $R_{O_{F}^{B}}(C_{v})$. It follows
from conjugacy invariance of $\mu$ and (\ref{eq:trans}) that the
distribution $\eta_{u}$ is invariant under right translation by $R_{O_{F}^{A}}(C_{u})$.
However this is impossible because $R_{O_{F}^{A}}(C_{u})$ does not
admit a finite right Haar measure. We have reached a contradiction
and therefore (\ref{eq:non-intersect}) is false. Instead, we have
\[
\mu\left(\bigcup_{A}\left\{ H:H\cap O_{F}^{A}\neq\{id\}\right\} \right)=1.
\]

\end{proof}

Now we assume Proposition \ref{contain} and show Theorem \ref{main}
by reducing the problem to the Higman-Thompson type group $V_{F}$
densely embedded in $\mathcal{N}_{F}$. 

Since $V_{F}$ is countable, the Chabauty space ${\rm Sub}(V_{F})$
is equipped with the topology inherited from the product topology
on $\{0,1\}^{V_{F}}.$ The intersection map ${\rm Sub}(\mathcal{N}_{F})\to{\rm Sub}(V_{F})$
given by $H\mapsto H\cap V_{F}$ is Borel measurable. Thus given an
IRS $\mu$ of $\mathcal{N}_{F}$, we can consider its pushforward
$\mu_{V_{F}}$ under $H\mapsto H\cap V_{F}$. 

A priori, an ergodic IRS of $\mathcal{N}_{F}$ may intersect with
$V_{F}$ trivially. However this cannot happen unless $H=\{id\}$
because of Lemma \ref{cover} and Proposition \ref{contain}. The
intersection $V_{F}\cap R_{O_{F}^{A}}(U)$ is nontrivial for any choice
of $A$ and open set $U$, indeed, $V_{F}\cap R_{O_{F}^{A}}(U)$ is
dense in $R_{O_{F}^{A}}(U)$, where $R_{O_{F}^{A}}(U)$ is equipped
with the subspace topology inherited from the natural inclusion $R_{O_{F}^{A}}\left(U\right)\hookrightarrow\mathcal{N}_{F}$.
Thus we complete the proof of Theorem \ref{main} by combining Proposition
\ref{contain} and the result that $V_{F}'$ has no non-trivial IRSs. 

\begin{proof}[Proof of Theorem \ref{main} given Proposition \ref{contain}]

Let $\mu$ be an ergodic IRS of $\mathcal{N}_{d}$, $\mu\neq\delta_{\{id\}}$.
By Lemma \ref{cover} and Proposition \ref{contain} (when $F$ is
transitive on $D$ one can use the special case Proposition \ref{containRO}),
we have that the collection of subgroups $H$ with the property that
there exists a complete finite tree $A$ and a non-empty open set
$U\subseteq\partial\mathcal{T}$ such that $H\ge R_{O_{F}^{A}}(U)'$
has $\mu$-measure $1$. Since $R_{O_{F}^{A}}(U)'$ contains non-trivial
locally order preserving $U(F)$-almost automorphisms, we have that
$V_{F}'\cap H\neq\{id\}$. 

Consider the induced IRS $\mu_{V_{F}}$ which is the pushforward of
$\mu$ under the intersection map $H\mapsto H\cap V_{F}$. Then by
\cite[Corollary 3.9]{Dudko-Medynets2}, the ergodic decomposition
of $\mu_{V_{F}}$ is of a convex combination of the form 
\[
\mu_{V_{F}}=\lambda_{0}\delta_{\{id\}}+\sum_{i=1}^{p}\lambda_{i}\delta_{L_{i}},
\]
where $p$ is finite and $L_{i}$ is a normal subgroup of $V_{F}$
that contains $V_{F}'$, see also \cite[Corollary 5.4]{rigidstab}
for an alternative proof. As explained in the previous paragraph,
Lemma \ref{cover} and Proposition \ref{contain} implies that $\mu$-a.e.
$H$ satisfies $V_{F}'\cap H\neq\{id\}$. Therefore $\lambda_{0}=0$
and $\mu$-a.e. $H$, $V_{F}\cap H\ge V_{F}'$. Since $H$ is closed,
it follows that $H$ contains the closure of $V_{F}'$. Since the
closure of $V_{F}'$ contains the commutator subgroup $\mathcal{N}_{F}'$,
we have proved the statement. 

\end{proof}

The proof of Proposition \ref{contain} occupies the next three sections.

\section{Tree automorphism orbits versus random permutations\label{sec:Tree-match}}

Consider a rooted tree $\mathsf{T}_{d,q}$, $d,q\ge2$, where the
root $o$ has $q$ children and the rest of the vertices have $d$
children. Denote by $W={\rm Aut}\left(\mathsf{T}_{d.q}\right)$ the
group of \emph{rooted} tree automorphisms of $\mathsf{T}_{d,q}$.
Note that $W$ has the structure of a semi-direct product 
\[
W=\left(\oplus_{v\in\mathsf{L}_{1}}{\rm Aut}(\mathsf{T}_{v})\right)\rtimes{\rm Sym}\left(\mathsf{L}_{1}\right),
\]
where $\mathsf{T}_{v}$ is the subtree rooted at $v$ and ${\rm Aut}(\mathsf{T}_{v})$
is the group of rooted tree automorphisms of $\mathsf{T}_{v}$. The
font $\mathsf{T}$ is used in this section to emphasize that the tree
is rooted and ${\rm Aut}(\mathsf{T}_{v})$ is the group of rooted
automorphisms. 

Write $\mathsf{L}_{n}$ for the level $n$ vertices with respect to
the root $o$, that is, $\mathsf{L}_{n}$ consists of vertices of
$\mathsf{T}_{d,q}$ such that $d(o,v)=n$. A vertex in $\mathsf{L}_{n}$
is encoded as a string $v=v_{1}\ldots v_{n}$, where $v_{1}\in\{0,\ldots,q-1\}$
and $v_{j}\in\left\{ 0,\ldots,d-1\right\} $ for $2\le j\le n$. For
two subset $E,F$ of $\mathsf{L}_{n}$, we write 
\[
E\sim_{W}F
\]
if $F$ is in the $W$-orbit of $E$, that is, $E\sim_{W}F$ if and
only if there exists $g\in W$ such that $F=E\cdot g$. 

In this auxiliary section we estimate the probability that two randomly
chosen subsets are in the same orbit of $W$. Such estimates will
be useful in the next sections to rule out certain cases of intransitivity
or imprimitivity. As in the previous section, denote by $C_{u}^{n}$
the vertices in the subtree rooted at $u$ of distance $n$ to $u$. 

\begin{lemma}\label{treematch}

In the rooted tree $\mathsf{T}_{d,q}$, let $u,v$ be two distinct
vertices in $\mathsf{L}_{1}$. Let $n,k$ be integers such that $n\ge2$
and $2\le k\le d^{n}/2$. Choose a set $E$ of size $k$ uniformly
random from $C_{u}^{n}$ and independently choose a set $F$ of size
$k$ uniformly random from $C_{v}^{n}$. Then for any $\delta>0$,
there exists constants $C,c>0$ only depending on $\delta,d$, such
that for all such $n,k$, 
\[
\mathbb{P}\left(E\sim_{W}F\right)\le C\exp\left(-ck^{\frac{d-1}{2d}-\delta}\right).
\]

\end{lemma}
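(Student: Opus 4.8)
The plan is to bound $\mathbb{P}(E\sim_W F)$ by a union bound over $g\in W$ together with a careful accounting of how much freedom $g$ has. The key observation is that $W$ is, up to the factor $\mathrm{Sym}(\mathsf{L}_1)$ (which I can ignore since $u,v$ are fixed vertices of $\mathsf{L}_1$ and I only need the component of $g$ that carries the subtree $\mathsf{T}_u$ onto $\mathsf{T}_v$), isomorphic to the iterated wreath product $\mathrm{Sym}(\{0,\dots,d-1\})\wr\cdots\wr\mathrm{Sym}(\{0,\dots,d-1\})$ ($n$ times, after also choosing the identification of the first-level child sets). An element $g$ acts on $C_u^n\cong \{0,\dots,d-1\}^n$ by a "tree automorphism": it is determined by choosing, for each internal vertex $w$ at levels $0,\dots,n-1$ of $\mathsf{T}_u$, a permutation $\sigma_w\in\mathrm{Sym}(\{0,\dots,d-1\})$, and then $g$ permutes the $d^n$ leaves accordingly. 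So $|{\rm Aut}(\mathsf{T}_u)\text{ acting on }C_u^n| = (d!)^{(d^n-1)/(d-1)}$, which is far too large for a naive union bound; the point is that most of these automorphisms cannot possibly send the specific set $E$ to the specific set $F$, and I need to exploit the structure of $E$ and $F$.

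First I would set up the union bound in the form $\mathbb{P}(E\sim_W F)\le \sum_{g}\mathbb{P}(F = E\cdot g)$, where $g$ ranges over a set of representatives for the relevant $W$-action on size-$k$ subsets; since $E$ is uniform among $\binom{d^n}{k}$ sets and, for fixed $g$, $E\cdot g$ is a fixed set, $\mathbb{P}(F=E\cdot g\mid E)=1/\binom{d^n}{k}$. Thus $\mathbb{P}(E\sim_W F)\le \bigl|\{E\cdot g: g\in {\rm Aut}(\mathsf{T}_u),\ |E|=k\}\bigr|\big/\binom{d^n}{k} \le N_{n,k}/\binom{d^n}{k}$, where $N_{n,k}$ is the number of distinct size-$k$ subsets of $C_v^n$ lying in the $W$-orbit of $E$ — but this still needs $E$ to be "generic." The right way is to condition on $E$ and bound, for a typical $E$, the number of $g$ (modulo the stabilizer of $E$) for which $E\cdot g$ is a candidate, i.e. to bound the orbit size of $E$ from below — equivalently the stabilizer from above — for most $E$. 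Concretely, I would show that with probability $\ge 1 - C\exp(-ck^{(d-1)/(2d)-\delta})$ over the choice of $E$, the set $E$ "spreads out" over the tree: for a large number $m$ of vertices $w$ at a suitable level $\ell$ (chosen so that $d^\ell \approx k^{(d-1)/(2d)\pm}$), the trace of $E$ on the subtree $\mathsf{T}_w\cap C_u^n$ has a size that is generic enough (e.g. all distinct, so the level-$\ell$ "profile" of $E$ pins down $\sigma_w$ substantially), making the stabilizer of $E$ in ${\rm Aut}(\mathsf{T}_u)$ small; then $|{\rm orbit}(E)| = |{\rm Aut}(\mathsf{T}_u)\ {\rm on}\ C_u^n| / |{\rm Stab}(E)|$ is huge and the probability that the independent $F$ lands in it is correspondingly tiny. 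This is the standard philosophy that a uniformly random large set has almost no symmetry; the quantitative rate $k^{(d-1)/(2d)-\delta}$ suggests the "hard level" $\ell$ is chosen with $d^\ell$ of order roughly $k^{(d-1)/(2d)}$ and one counts coincidences via a second-moment or large-deviation (Poissonization) estimate — the expected number of "collision" pairs among the level-$\ell$ traces, or among pairs of leaves forced to agree, being polynomial in $k$ times $d^{-\ell}$, which then gets exponentiated by a Chernoff-type argument into $\exp(-ck^{(d-1)/(2d)-\delta})$.

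The main obstacle, and the step I would spend the most care on, is the combinatorial/probabilistic heart: showing that a uniformly random $k$-subset $E\subseteq C_u^n$ has, with the stated exponentially-high probability, a small automorphism-stabilizer — or, dually, directly bounding $\sum_{g\in {\rm Aut}(\mathsf{T}_u)}\mathbb{P}(E\cdot g = F)$ by isolating a level $\ell$ and arguing that $g$ must match the (random) level-$\ell$ profile of $E$ to that of $F$, which happens with small probability unless the profiles are "degenerate," and degeneracy of a random profile is itself exponentially rare. One has to choose $\ell$ to balance two competing effects: larger $\ell$ gives more independent constraints on $g$ (better), but the tail estimate for the profile being non-degenerate degrades (worse); optimizing the trade-off against the constraint $k\le d^n/2$ and extracting the exponent $\tfrac{d-1}{2d}-\delta$ is the delicate calculation. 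I would also need a clean way to compare $N_{n,k}/\binom{d^n}{k}$ when $k$ is very small (the regime $k=O(1)$ or $k$ much smaller than $d^n$) versus when $k$ is close to $d^n/2$; the bound $2\le k\le d^n/2$ and the symmetry $k\leftrightarrow d^n-k$ are there to ensure we are always in the "many constraints available" regime, and I would handle small $k$ by a direct counting argument (the orbit of a $k$-set is at least $\sim (d^\ell)^{\,k}/k!$-ish once $k\lesssim d^\ell$ and the points of $E$ land in distinct level-$\ell$ subtrees, which again is a high-probability event), and large $k$ by the profile argument above.
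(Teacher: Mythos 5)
Your starting identity is fine: conditioning on $E$, the chance that the independent uniform $F$ is equivalent to $E$ equals $\left|\{k\text{-subsets of }C_v^n\text{ in the }W\text{-orbit of }E\}\right|/\binom{d^n}{k}$. But the mechanism you then propose is inverted: if the orbit of $E$ is huge, the probability that $F$ lands in it is \emph{larger}, not smaller. What is needed is that the orbit of a typical $E$ is an exponentially small \emph{fraction} of all $k$-subsets, and this cannot be extracted from "a random set has almost no symmetry." In fact that heuristic is false for rooted tree automorphism groups: at every internal vertex just above the leaves, the restriction of $E$ to its $d$ leaf-children is stabilized by a nontrivial element of ${\rm Sym}(d)$ (always when $d\ge3$, and at a positive fraction of such vertices with overwhelming probability when $d=2$), and each such local permutation extends by the identity to an automorphism fixing $E$; hence ${\rm Stab}(E)$ has order at least exponential in $d^{n-1}$ for essentially every $E$. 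Moreover $\left|{\rm Aut}(\mathsf{T}_u)\right|=(d!)^{(d^n-1)/(d-1)}=e^{\Theta(d^n)}$ is comparable to or larger than $\binom{d^n}{k}$ (and vastly larger when $k\ll d^n$), so no bound of the form $|{\rm orbit}(E)|=|{\rm Aut}|/|{\rm Stab}(E)|$ with merely "small" stabilizer can come out below $\binom{d^n}{k}$, let alone beat it by $e^{-ck^{(d-1)/(2d)-\delta}}$. The union bound over $g\in W$, salvaged only by symmetry-breaking of $E$, therefore cannot give the lemma.

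The quantitative heart — which your plan explicitly defers as "the delicate calculation" — is exactly where the paper works, and it works differently: set $a(\ell,r)=\mathbb{P}(A_1\sim_W A_2)$ for independent uniform $r$-subsets of two level-$\ell$ subtrees and run a recursion down the tree. For $A_1\sim_W A_2$ the multiset of child-intersection sizes of $A_1$ must match that of $A_2$ up to a permutation in ${\rm Sym}(d)$; a Chernoff/relative-entropy bound (Fact \ref{kset}) concentrates these sizes near $r/d$, the local-limit bound $\mathbb{P}(|(K\cdot\boldsymbol{\sigma})\cap U|=i)\le C\sqrt{k/(i(k-i))}$ makes the coincidence of the two independent size-profiles cost a factor $O(r^{-(d-1)/2})$, and then one recurses into the $d$ children with $r\mapsto r/d-\epsilon r$, iterating $s$ levels with $s$ chosen so that the accumulated polynomial gains beat the $d$-fold branching; summing the errors gives $a(\ell,r)\le C\exp(-cr^{(d-1)/(2d)-\delta})$. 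Your "match the profile at one well-chosen level $\ell$" idea points in this direction, but a single level only yields a polynomial-in-$k$ gain, not the stretched-exponential bound; without the iterated scheme (or an equivalent multi-level accounting) the stated exponent $\frac{d-1}{2d}-\delta$ is not reachable, so as written the proposal does not constitute a proof.
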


The lemma is shown by recursion down the tree. We use the following
well-known basic probability estimates. For $p,q\in(0,1)$, denote
by $H(q||p)$ the relative entropy (also called the Kullback--Leibler
divergence) between the Bernoulli distribution with parameter $q$
and the Bernoulli distribution with parameter $p$, that is, 
\begin{equation}
H(q||p)=q\log\frac{q}{p}+(1-q)\log\frac{1-q}{1-p}.\label{eq:relative}
\end{equation}
The relative entropy $H(q||p)$ is always non-negative and is zero
if and only if $q=p$.

\begin{fact}\label{kset}

Let $X$ be a finite set, $\boldsymbol{\sigma}$ a uniformly random
permutation in ${\rm Sym}(X)$. Let $U$ and $K$ be two non-empty
subset of $X$ and write $p=|U|/|X|$, $k=|K|$. Then for any $x>0$, 

\begin{align}
\mathbb{P}\left(\left|\left(K\cdot\boldsymbol{\sigma}\right)\cap U\right|>(p+x)k\right) & \le e^{-H(p+x||p)k},\nonumber \\
\mathbb{P}\left(\left|\left(K\cdot\boldsymbol{\sigma}\right)\cap U\right|<(p-x)k\right) & \le e^{-H(p-x||p)k}.\label{eq:rel-ent}
\end{align}
Moreover, suppose $k\le|X|/2$, there exists an absolute constant
$C>0$ such that for $i\in[pk/2,\frac{3}{2}pk]$,
\begin{equation}
\mathbb{P}(|\left(K\cdot\boldsymbol{\sigma}\right)\cap U|=i)\le C\sqrt{\frac{k}{i(k-i)}}.\label{eq:sup}
\end{equation}

\end{fact}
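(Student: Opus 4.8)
The plan is to recognise the probability in question as the value of a hypergeometric distribution, and then to prove the two tail bounds (\ref{eq:rel-ent}) via Hoeffding's comparison with the binomial, and the local bound (\ref{eq:sup}) via Stirling's formula together with the strong concavity of the binary entropy function $h(t):=-t\log t-(1-t)\log(1-t)$ (not to be confused with the relative entropy $H(\cdot\|\cdot)$ of (\ref{eq:relative})).

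\emph{Reduction.} Since $\boldsymbol{\sigma}$ is uniform in $\mathrm{Sym}(X)$, the set $K\cdot\boldsymbol{\sigma}$ is a uniformly random $k$-element subset of $X$; hence $Y:=|(K\cdot\boldsymbol{\sigma})\cap U|$ has the hypergeometric distribution with parameters $N:=|X|$, $m:=|U|=pN$ and $k$, i.e. $\mathbb{P}(Y=i)=\binom{m}{i}\binom{N-m}{k-i}/\binom{N}{k}$, supported on $\max(0,k-(N-m))\le i\le\min(m,k)$. All three displayed inequalities are thus statements about this explicit law. For the tail bounds I would argue that drawing $k$ items from $X$ without replacement is dominated, in the convex order, by drawing them with replacement: for every convex $f$ one has $\mathbb{E}[f(Y)]\le\mathbb{E}[f(\widetilde{Y})]$ with $\widetilde{Y}\sim\mathrm{Bin}(k,p)$ (Hoeffding's inequality for sampling without replacement, applied to the indicator function of $U$). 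Taking $f(t)=e^{\lambda t}$ gives $\mathbb{E}[e^{\lambda Y}]\le(1-p+pe^{\lambda})^{k}$, and the usual Cram\'er--Chernoff optimisation (optimal $\lambda=\log\frac{q(1-p)}{p(1-q)}>0$) yields $\mathbb{P}(Y\ge qk)\le e^{-k H(q\|p)}$ for $q>p$; the symmetric choice $\lambda<0$ gives $\mathbb{P}(Y\le qk)\le e^{-kH(q\|p)}$ for $q<p$. Putting $q=p+x$, respectively $q=p-x$, is precisely (\ref{eq:rel-ent}).

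\emph{Local bound.} It suffices to treat $i$ in the interior of the support of $Y$: the endpoints $i\in\{0,k\}$ of $[0,k]$ make the right side of (\ref{eq:sup}) equal to $+\infty$, while the remaining boundary values $i=\min(m,k)$ (when $m<k$) and $i=k-(N-m)$ (when $k>N-m$) are handled by the tail bounds above. For interior $i$ I would apply Stirling's formula in the form $\binom{n}{j}=\Theta\!\left(\sqrt{n/(j(n-j))}\,e^{n h(j/n)}\right)$, with absolute implied constants, to the three binomials (upper bound on the two in the numerator, lower bound on $\binom{N}{k}$). The three proportions involved satisfy
\[
\frac{k}{N}=p\cdot\frac{i}{m}+(1-p)\cdot\frac{k-i}{N-m},
\]
so by the $4$-strong concavity of $h$ (one has $-h''(t)=1/(t(1-t))\ge4$ on $(0,1)$),
\[
mh\!\Big(\tfrac{i}{m}\Big)+(N-m)h\!\Big(\tfrac{k-i}{N-m}\Big)\le Nh\!\Big(\tfrac{k}{N}\Big)-\frac{2N(i-pk)^{2}}{m(N-m)},
\]
and the exponential factors from Stirling collapse into a harmless weight $e^{-E}\le1$ with $E:=\dfrac{2N(i-pk)^{2}}{m(N-m)}$. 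What remains is
\[
\mathbb{P}(Y=i)\ \le\ C_{0}\,\sqrt{P}\;e^{-E}\,\sqrt{\frac{k}{i(k-i)}},\qquad P:=\frac{m(N-m)(N-k)}{N\,(m-i)\,(N-m-k+i)},
\]
with $C_{0}$ absolute, so one only has to bound $\sqrt{P}\,e^{-E}$ by an absolute constant. From $i\le\tfrac32pk$ and $k\le N/2$ one gets $i\le\tfrac34m$, hence $m/(m-i)\le4$; and $(N-k)/N\le1$. For the last factor set $d:=N-m-k+i$ (which is $\ge1$ for interior $i$) and split: if $d\ge\tfrac12(1-p)(N-k)$ then $\frac{N-m}{d}\le\frac{2(N-m)}{(1-p)(N-k)}=\frac{2N}{N-k}\le4$ since $k\le N/2$; if $d<\tfrac12(1-p)(N-k)$ then, since $pk-i=(1-p)(N-k)-d$, this forces $i<pk$ and $pk-i>\tfrac12(1-p)(N-k)\ge\tfrac14(1-p)N$, whence $E\ge\tfrac18(1-p)N$, and this exponent dominates $\log\frac{N-m}{d}\le\log\big((1-p)N\big)$ because $\log x\le x/4+\log4$ for all $x>0$. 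In either case $\log P-2E$ is bounded by an absolute constant, so $\sqrt{P}\,e^{-E}$ is too, and (\ref{eq:sup}) follows.

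\emph{Main obstacle.} The reduction and the tail bounds are routine (Hoeffding plus Chernoff). The real work lies in the local bound: one must carry out the entropy cancellation quantitatively, in order to extract the Gaussian weight $e^{-E}$, and then run the case analysis controlling the polynomial Stirling prefactor $P$. The mechanism that makes the latter succeed is that $P$ is large only when $i$ sits close to an endpoint of the support of $Y$ — which is exactly where $e^{-E}$ is exponentially small — so the two effects offset each other; keeping every constant absolute (independent of $N,m,k,i$) throughout this offsetting is where the care is needed. The hypotheses $k\le|X|/2$ and $i\in[pk/2,\tfrac32pk]$ are used precisely to make it go through, and the parameters $\delta,d$ of Lemma~\ref{treematch} do not enter here.
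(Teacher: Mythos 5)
Your proposal is correct and takes essentially the same route as the paper: identify $|(K\cdot\boldsymbol{\sigma})\cap U|$ as hypergeometric, prove (\ref{eq:rel-ent}) by dominating the moment generating function by the binomial one and optimizing the Chernoff bound, and prove (\ref{eq:sup}) by Stirling's approximation applied to the ratio of binomial coefficients, with the hypotheses $k\le|X|/2$ and $i\in[pk/2,\tfrac32 pk]$ controlling the prefactor. The only differences are presentational: you obtain the MGF bound by citing Hoeffding's convex-order comparison of sampling without and with replacement where the paper iterates conditional expectations, and you make explicit (via strong concavity of the binary entropy and the ensuing case analysis on the Stirling prefactor) the bookkeeping that the paper compresses into ``the statement follows.''
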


We include a proof for Fact \ref{kset} for the convenience of the
reader. Recall Stirling's approximation:
\begin{equation}
1\le\frac{n!}{\sqrt{2\pi n}\left(\frac{n}{e}\right)^{n}}\le\frac{e}{\sqrt{2\pi}}\mbox{ for all }n\ge1.\label{eq:stirling}
\end{equation}
\begin{proof}[Proof of Fact \ref{kset}]

List the elements of $K$ as $x_{1},\ldots,x_{k}$ and write $Z_{j}=\sum_{i=1}^{k}{\bf 1}_{U}\left(x_{i}\cdot\boldsymbol{\sigma}\right)$.
Then $Z_{k}=|\left(K\cdot\boldsymbol{\sigma}\right)\cap U|$. The
moment generating function of $Z_{k}$ satisfies that for $\lambda>0$,
\begin{align*}
\mathbb{E}\left[e^{\lambda Z_{k}}\right] & =\mathbb{E}\left[e^{\lambda Z_{k-1}}\mathbb{E}\left[e^{\lambda(Z_{k}-Z_{k-1})}|Z_{k-1}\right]\right]\\
 & =\mathbb{E}\left[e^{\lambda Z_{k-1}}\left(e^{\lambda}\frac{\left(|U|-Z_{k-1}\right)_{+}}{|X|-Z_{k-1}}+\frac{|X|-Z_{k-1}-\left(|U|-Z_{k-1}\right)_{+}}{|X|-Z_{k-1}}\right)\right]\\
 & \le\mathbb{E}\left[e^{\lambda Z_{k-1}}\left(e^{\lambda}p+1-p\right)\right].
\end{align*}
Iterate this inequality we have $\mathbb{E}\left[e^{\lambda Z_{k}}\right]\le\left(e^{\lambda}p+1-p\right)^{k}$.
By the Chernoff bound, we have
\[
\mathbb{P}\left(Z_{k}\ge(p+x)k\right)\le e^{-\lambda(p+x)}\mathbb{E}\left[e^{\lambda Z_{k}}\right]\le e^{-\lambda(p+x)}\left(e^{\lambda}p+(1-p)\right)^{k}.
\]
Optimize the choice of $\lambda$ we obtain the first inequality.
Similarly, $\mathbb{P}\left(Z_{k}\le(p-x)k\right)=\mathbb{P}\left(\left|\left(K\cdot\boldsymbol{\sigma}\right)\cap U^{c}\right|\ge(1-p+x)k\right)$.
Apply the first inequality to the set $U^{c}$, we obtain the second
inequality. 

For the last bound, write $u=|U|$, $x=|X|$. We have that by Stirling's
approximation (\ref{eq:stirling}),
\begin{align*}
\mathbb{P}(|\left(K\cdot\boldsymbol{\sigma}\right)\cap U| & =i)=\left(\begin{array}{c}
u\\
i
\end{array}\right)\left(\begin{array}{c}
x-u\\
k-i
\end{array}\right)/\left(\begin{array}{c}
x\\
k
\end{array}\right)\\
 & \le C\left(\frac{1-k/x}{\left(1-i/u\right)\left(1-(k-i)/(x-u)\right)}\frac{k}{i(k-i)}\right)^{1/2}\\
 & \cdot\exp\left(-H\left(\frac{i}{k}||p\right)k-H\left(\frac{u-i}{x-k}||p\right)(x-k)\right).
\end{align*}
The statement follows.

\end{proof}

\begin{proof}[Proof of Lemma \ref{treematch}]

Consider recursively down the subtrees rooted at $u,v$. In order
to have an element in $W$ which maps $E\to F$, it is necessary that
there exists a permutation $\gamma\in{\rm Sym}\left(\left\{ 0,\ldots,d-1\right\} \right)={\rm Sym}(d)$
such that for each child $ui$ of $u$, $E\cap C_{ui}^{n-1}\sim_{W}F\cap C_{v\gamma(i)}^{n-1}$.
Note that since the sets are chosen uniformly at random independently,
conditioned on the event $\left|E\cap C_{ui}^{n-1}\right|=\left|F\cap C_{v\gamma(i)}^{n-1}\right|=r$,
the distribution of $E\cap C_{ui}^{n-1}$ and $F\cap C_{v\gamma(i)}^{n-1}$
are independent, where $E\cap C_{ui}^{n-1}$ is distributed uniformly
on subsets of size $r$ of $C_{ui}^{n-1}$ and $F\cap C_{v\gamma(i)}^{n-1}$
is distributed uniformly on subsets of size $r$ of $C_{v\gamma(i)}^{n-1}$. 

The number $n$ is fixed throughout the calculation. For two distinct
vertices $y,z$ in level $\ell$, choose a set $A_{1}$ of size $r$
uniformly from $C_{y}^{n-\ell}$, and independently a set $A_{2}$
of size $r$ uniformly from $C_{z}^{n-\ell}$. It is clear that the
probability that $A_{1}\sim_{W}A_{2}$ depends only on the level $\ell$
and the size $r$, and we denote it by 
\begin{equation}
a(\ell,r)=\mathbb{P}(A_{1}\sim_{W}A_{2}).\label{eq:alr-2}
\end{equation}
Take a small constant $\epsilon<1/d^{2}$. We have
\begin{align*}
\mathbb{P}(A_{1}\sim_{W}A_{2}) & \le\mathbb{P}\left(\exists i\in\{0,\ldots,d-1\}\ \left|\left|A_{1}\cap C_{yi}^{n-\ell-1}\right|-\frac{r}{d}\right|>\epsilon r\right)\\
 & +\mathbb{P}\left(A_{1}\sim_{W}A_{2}\mbox{ and }\left|\left|A_{1}\cap C_{yi}^{n-\ell-1}\right|-\frac{r}{d}\right|\le\epsilon r\mbox{ for all }i\right)\\
 & :={\rm I}+{\rm II}.
\end{align*}
Write 
\[
\mathfrak{h}(\epsilon)=\max\left\{ H\left(\frac{1}{d}-\epsilon||\frac{1}{d}\right),H\left(\frac{1}{d}+\epsilon||\frac{1}{d}\right)\right\} .
\]
Then by (\ref{eq:rel-ent}), we have 
\[
{\rm I}\le2d\exp\left(-\mathfrak{h}(\epsilon)r\right).
\]
Then one step recursion to the children of $y$ and $z$ as in the
previous paragraph shows that 
\begin{equation}
{\rm II}\le\sum_{\gamma\in{\rm Sym}(d)}\sum_{\substack{\left|r_{i}-r/d\right|\le\epsilon r,\sum r_{i}=r}
}\mathbb{P}\left(\bigcap_{i=0}^{d-1}\left\{ \left|A_{1}\cap C_{yi}^{n-\ell-1}\right|=\left|A_{2}\cap C_{z\gamma(i)}^{n-\ell-1}\right|=r_{i}\right\} \right)\prod_{i=0}^{d-1}a(\ell+1,r_{i}).\label{eq:II}
\end{equation}
By independence we have 
\begin{align*}
 & \mathbb{P}\left(\bigcap_{i=0}^{d-1}\left\{ \left|A_{1}\cap C_{yi}^{n-\ell-1}\right|=\left|A_{2}\cap C_{z\gamma(i)}^{n-\ell-1}\right|=r_{i}\right\} \right)\\
= & \mathbb{P}\left(\bigcap_{i=0}^{d-1}\left\{ \left|A_{1}\cap C_{yi}^{n-\ell-1}\right|=r_{i}\right\} \right)\mathbb{P}\left(\bigcap_{i=0}^{d-1}\left\{ \left|A_{2}\cap C_{zi}^{n-\ell-1}\right|=r_{i}\right\} \right)\\
\le & \mathbb{P}\left(\bigcap_{i=0}^{d-1}\left\{ \left|A_{1}\cap C_{yi}^{n-\ell-1}\right|=r_{i}\right\} \right)C_{d,\epsilon}r^{-\frac{d-1}{2}},
\end{align*}
where in the second line we applied the bound (\ref{eq:sup}) $d-1$
times and the constant $C_{d,\epsilon}$ is $C^{d-1}\left(\frac{1}{d}-(d-1)\epsilon\right)^{-d+1}.$
Note that if $r>d^{n-\ell}/2$, then we should replace $r$ by $d^{n-\ell}-r$
in $a(\ell,r)$. For $r\le d^{n-\ell}/2$, set
\[
\tilde{a}(\ell,r)=\sup_{r\le s\le d^{n-\ell}/2}a(\ell,s).
\]
Plugging back in (\ref{eq:II}), we have 
\[
{\rm II}\le d!C_{d,\epsilon}r^{-\frac{d-1}{2}}\tilde{a}\left(\ell+1,\frac{r}{d}-\epsilon r\right)^{d},
\]
Combine part I and II, we have
\[
\tilde{a}\left(\ell,r\right)\le d\exp\left(-\mathfrak{h}(\epsilon)r\right)+d!C_{d,\epsilon}r^{-\frac{d-1}{2}}\tilde{a}\left(\ell+1,\frac{r}{d}-\epsilon r\right)^{d}.
\]
Using the bound $(x+y)^{n}\le2^{n-1}(x^{n}+y^{n})$ we can iterate
this inequality. Start with $r$ where $d^{n-\ell}>2r$ and iterate
for $s$ steps, where $s$ is such that 
\begin{equation}
d!C_{d,\epsilon}^{d}r^{-\frac{d-1}{2}}\le\left(\frac{1}{4}\left(\frac{1}{d}-\epsilon\right)^{s}\right)^{d},\label{eq:s}
\end{equation}
then summing up the terms we have 
\[
a\left(\ell,r\right)\le C_{1}\exp\left(-\mathfrak{h}(\epsilon)r\left(1-d\epsilon\right)^{s-1}\right)+2^{-d^{s-1}},
\]
where $C_{1}$ is a constant depending only on $d,\epsilon$. Given
a $\delta>0$, choose $\epsilon$ sufficiently small and $s$ the
largest integer satisfying (\ref{eq:s}), we conclude that for $r\le d^{n-\ell}/2$,
\begin{equation}
a(\ell,r)\le C\exp\left(-cr^{\frac{d-1}{2d}-\delta}\right).\label{eq:alr}
\end{equation}
The statement is given by taking $\ell=1$ in (\ref{eq:alr}). 

\end{proof}

We deduce two corollaries from Fact \ref{kset} and Lemma \ref{treematch},
which will be used in the next section. 

\begin{corollary}\label{cut1}

In the rooted tree $\mathsf{T}_{d,q}$, let $u,v$ be two distinct
vertices in $\mathsf{L}_{1}$. Let $\boldsymbol{\sigma}$ be a uniform
random permutation in ${\rm Sym}(\mathsf{L}_{n})$. Let $K$ be a
subset of $\mathsf{L}_{n}$ with $k=|K|\le|\mathsf{L}_{n}|/2$. Then
for any $\delta>0$, we have
\[
\mathbb{P}\left(\left(K\cdot\boldsymbol{\sigma}\right)\cap C_{u}^{n}\sim_{W}\left(K\cdot\boldsymbol{\sigma}\right)\cap C_{v}^{n}\right)\le C\exp\left(-ck^{\frac{d-1}{2d}-\delta}\right),
\]
where $C,c>0$ are constants that only depends on $\delta,d$ and
$q$. 

\end{corollary}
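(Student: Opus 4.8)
The plan is to reduce Corollary~\ref{cut1} to Lemma~\ref{treematch} by conditioning on the sizes of the random set $K\cdot\boldsymbol{\sigma}$ inside the two subtrees $C_u^n$ and $C_v^n$, together with a typicality estimate that these sizes are both close to $k/d$ when $q$ is the root degree. Write $p=1/d$, and note that $|C_u^n|=|C_v^n|=d^{n-1}$ while $|\mathsf{L}_n|=qd^{n-1}$, so the proportion of $\mathsf{L}_n$ covered by each of $C_u^n,C_v^n$ is $1/(qd^{\,0})\cdot\ldots$; more precisely each $C_w^n$ with $w\in\mathsf{L}_1$ has $|C_w^n|/|\mathsf{L}_n|=1/q$. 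First I would apply Fact~\ref{kset} (the relative-entropy tail bound \eqref{eq:rel-ent}) with $U=C_u^n$ and $U=C_v^n$ to see that, outside an event of probability at most $2e^{-H(1/q+x\,\|\,1/q)k}+2e^{-H(1/q-x\,\|\,1/q)k}$, both intersections $a:=|(K\cdot\boldsymbol{\sigma})\cap C_u^n|$ and $b:=|(K\cdot\boldsymbol{\sigma})\cap C_v^n|$ lie in the window $[(1/q-x)k,(1/q+x)k]$ for a fixed small $x$ depending on $q$.

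Next, I would condition on the unordered partition of $K\cdot\boldsymbol{\sigma}$ into its traces on $C_u^n$, $C_v^n$, and the complement $\mathsf{L}_n\setminus(C_u^n\cup C_v^n)$. Crucially, conditioned on the \emph{sizes} $a$ and $b$ of these traces, the trace $(K\cdot\boldsymbol{\sigma})\cap C_u^n$ is a uniformly random $a$-subset of $C_u^n$ and $(K\cdot\boldsymbol{\sigma})\cap C_v^n$ is a uniformly random $b$-subset of $C_v^n$, and these two are independent of each other (the joint distribution of a uniform random set restricted to disjoint blocks, conditioned on block sizes, factorizes). Now $C_u^n$ and $C_v^n$ are each copies of the rooted tree below a level-$1$ vertex, i.e.\ each is $C_w^n$ for $w\in\mathsf{L}_1$, with $|C_w^n|=d^{n-1}$; after possibly replacing $a$ by $d^{n-1}-a$ (respectively $b$ by $d^{n-1}-b$) using that $X\sim_W Y$ iff $X^c\sim_W Y^c$ within a fixed subtree, we may assume $a,b\le d^{n-1}/2$, and of course being $W$-equivalent forces $a=b$. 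So we are exactly in the situation of Lemma~\ref{treematch}, applied at level $\ell=1$ inside $\mathsf{T}_{d,q}$ (or directly to the two isomorphic subtrees rooted at $u,v$), with set-size $a=b$. By that lemma, $\mathbb{P}((K\cdot\boldsymbol{\sigma})\cap C_u^n\sim_W (K\cdot\boldsymbol{\sigma})\cap C_v^n \mid a=b=r)\le C\exp(-cr^{\frac{d-1}{2d}-\delta})$, and on the typical event $r\ge(1/q-x)k\ge c'k$, so this is at most $C\exp(-c''k^{\frac{d-1}{2d}-\delta})$.

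Finally I would combine the two pieces: the total probability is bounded by $\mathbb{P}(\text{atypical sizes})+\sup_{r}\mathbb{P}(\sim_W\mid a=b=r)$, and since the relative-entropy exponent $H(1/q\pm x\,\|\,1/q)k$ decays exponentially in $k$ while the Lemma~\ref{treematch} bound decays like $\exp(-ck^{\frac{d-1}{2d}-\delta})$, the second term dominates; absorbing constants gives the claimed bound $C\exp(-ck^{\frac{d-1}{2d}-\delta})$ with $C,c$ depending only on $\delta,d,q$. One bookkeeping subtlety to handle carefully: the event $\{a=b=r\}$ has positive probability only for $r$ in the appropriate range, and the conditioning argument must be phrased so that the independence of the two traces given their sizes is cleanly justified (this is where a one-line remark about uniform sampling restricted to disjoint blocks belongs).

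The main obstacle I expect is not the conditioning — that is standard — but making sure the reduction to Lemma~\ref{treematch} genuinely uses the lemma in its stated form. Lemma~\ref{treematch} is stated for $u,v\in\mathsf{L}_1$ of the \emph{same} ambient tree $\mathsf{T}_{d,q}$ and for sets of a common size $k$ with $2\le k\le d^n/2$; here the roles of "$n$" and the set-size shift (our level-$n$ vertices of $\mathsf{T}_{d,q}$ are at distance $n-1$ below a level-$1$ vertex, and our conditioned set-size $r$ plays the role of the lemma's "$k$"), so one must be careful that $r$ still satisfies $2\le r\le d^{n-1}/2$ after the complementation step and that $n-1\ge 2$, i.e.\ handle the small-$n$ or small-$k$ edge cases separately (for $n=1$ or $k$ bounded the statement is trivial by adjusting $C$). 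Everything else is routine: assembling the tail bounds and re-absorbing constants.
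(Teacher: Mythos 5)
Your proposal is correct and follows essentially the same route as the paper: condition on the (equal) sizes of the two traces, use that given these sizes the traces are independent and uniform in $C_u^n$ and $C_v^n$, control the sizes via the relative-entropy bounds of Fact \ref{kset} (concentration around $k/q$), and apply Lemma \ref{treematch} on the typical event. Your extra bookkeeping about the level shift and the complementation step only makes explicit what the paper's proof glosses over.
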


\begin{proof}

In order to have an element in $W$ which maps $\left(K\cdot\boldsymbol{\sigma}\right)\cap C_{u}^{n}\to\left(K\cdot\boldsymbol{\sigma}\right)\cap C_{v}^{n}$,
it is necessary that they are of equal sizes. Since $\sigma$ is uniform,
conditioned on the event $\left|\left(K\cdot\sigma\right)\cap C_{u}^{n}\right|=\left|\left(K\cdot\sigma\right)\cap C_{v}^{n}\right|=r$,
the distribution of the sets $\left(K\cdot\sigma\right)\cap C_{u}^{n}$
and $\left(K\cdot\sigma\right)\cap C_{v}^{n}$ are independent, where
each $\left(K\cdot\sigma\right)\cap C_{x}^{n}$ is distributed uniformly
on subsets of size $r$ of $C_{x}^{n}$, $x\in\{u,v\}$. Therefore
we have 
\begin{multline*}
\mathbb{P}\left(\left(K\cdot\boldsymbol{\sigma}\right)\cap C_{u}^{n}\sim_{W}\left(K\cdot\boldsymbol{\sigma}\right)\cap C_{v}^{n}\right)\\
=\sum_{r\le k/2}\mathbb{P}\left(\left|\left(K\cdot\boldsymbol{\sigma}\right)\cap C_{u}^{n}\right|=\left|\left(K\cdot\boldsymbol{\sigma}\right)\cap C_{v}^{n}\right|=r\right)p(r,n),
\end{multline*}
where $p(r,n)=\mathbb{P}\left(E\sim_{W}F\right)$, the set $E$ is
a uniformly random subset of size $r$ in $C_{u}^{n}$ and $F$ is
an independent uniformly random subset of size $r$ in $C_{v}^{n}$.

By Fact \ref{kset}, the size of $\left|\left(K\cdot\boldsymbol{\sigma}\right)\cap C_{u}^{n}\right|$
is concentrated around $k/q$. Thus, apply Fact \ref{kset} and Lemma
\ref{treematch}, we have for any $\epsilon>0$,
\begin{multline*}
\mathbb{P}\left(\left(K\cdot\boldsymbol{\sigma}\right)\cap C_{u}^{n}\sim_{W}\left(K\cdot\boldsymbol{\sigma}\right)\cap C_{v}^{n}\right)\\
\le\exp\left(-H(1/q+\epsilon||1/q)k\right)+\exp\left(-H(1/q-\epsilon||1/q)k\right)+C\exp\left(-c\left(\frac{k}{q}-\epsilon k\right)^{\frac{d-1}{2d}-\delta}\right).
\end{multline*}
Choosing for example $\epsilon=\frac{1}{2q}$, we obtain the statement. 

\end{proof}

\begin{corollary}\label{cut2}

In the rooted tree $\mathsf{T}_{d,q}$, let $u,v$ be two distinct
vertices in $\mathsf{L}_{1}$. Let $\boldsymbol{\sigma}$ be a uniform
random permutation in ${\rm Sym}(\mathsf{L}_{n})$. Let $K_{1},K_{2}$
be two disjoint subsets of $\mathsf{L}_{n}$ with $\left|K_{1}\right|=\left|K_{2}\right|=k$.
Then
\[
\mathbb{P}\left(\left(K_{1}\cdot\boldsymbol{\sigma}\right)\cap C_{u}^{n}\sim_{W}\left(K_{2}\cdot\boldsymbol{\sigma}\right)\cap C_{v}^{n}\right)\le\exp\left(-c_{\delta}k^{\frac{d-1}{2d}-\delta}\right),
\]
where $c_{\delta}>0$ is a constant that only depends on $\delta,d$
and $q$. 

\end{corollary}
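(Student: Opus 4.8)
The plan is to follow the scheme of the proof of Corollary~\ref{cut1}, the only novelty being that we now have two disjoint sets $K_1,K_2$ in place of a single set $K$, so that the two random sets
\[
E:=(K_1\cdot\boldsymbol{\sigma})\cap C_u^n,\qquad F:=(K_2\cdot\boldsymbol{\sigma})\cap C_v^n
\]
come from two independently placed sets rather than from one. First I would record the conditional structure of the pair $(E,F)$. Since $\boldsymbol{\sigma}$ is uniform, the law of the ordered pair $\bigl(K_1\cdot\boldsymbol{\sigma},\,K_2\cdot\boldsymbol{\sigma}\bigr)$ is invariant under the simultaneous action on both coordinates of the block-diagonal subgroup ${\rm Sym}(C_u^n)\times{\rm Sym}(C_v^n)\times{\rm Sym}\bigl(\mathsf{L}_n\setminus(C_u^n\cup C_v^n)\bigr)$ of ${\rm Sym}(\mathsf{L}_n)$ (here $C_u^n\cap C_v^n=\emptyset$ because $u\neq v$ in $\mathsf{L}_1$). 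Conditioning on the sizes of the intersections of $K_1\cdot\boldsymbol{\sigma}$ and of $K_2\cdot\boldsymbol{\sigma}$ with the two blocks $C_u^n$ and $C_v^n$, this invariance forces the conditional law to factor over the blocks, each factor being uniform on ordered pairs of disjoint subsets of the prescribed sizes. In particular, conditioned on $|E|$ and $|F|$, the sets $E$ and $F$ are independent, with $E$ uniform among the subsets of $C_u^n$ of size $|E|$ and $F$ uniform among the subsets of $C_v^n$ of size $|F|$ --- exactly as in the proof of Corollary~\ref{cut1}, except that here the constraint $|E|+|F|\le k$ is no longer present.

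Since $E\sim_W F$ forces $|E|=|F|$, I would then write
\[
\mathbb{P}\bigl(E\sim_W F\bigr)=\sum_{r}\mathbb{P}\bigl(|E|=|F|=r\bigr)\,p(r,n),
\]
where, as in the proof of Corollary~\ref{cut1}, $p(r,n)$ denotes the probability that two independent uniformly random size-$r$ subsets, one of $C_u^n$ and one of $C_v^n$, lie in the same $W$-orbit. By Fact~\ref{kset} (applied with $X=\mathsf{L}_n$ and $U=C_u^n$, resp.\ $U=C_v^n$), both $|E|$ and $|F|$ concentrate around $k/q$: for a fixed small $\epsilon>0$, each of the events $|E|<(1-\epsilon)k/q$ and $|F|<(1-\epsilon)k/q$ has probability at most $e^{-c_\epsilon k}$. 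On the complement of their union one has $r\ge(1-\epsilon)k/q$, and Lemma~\ref{treematch} (with its size parameter equal to our $r$) gives $p(r,n)\le C\exp\!\bigl(-c\,r^{\frac{d-1}{2d}-\delta}\bigr)\le C\exp\!\bigl(-c\,((1-\epsilon)k/q)^{\frac{d-1}{2d}-\delta}\bigr)$; since $\sum_r\mathbb{P}(|E|=|F|=r)\le 1$, the total contribution of these terms is at most this last quantity. Adding the two tail probabilities, and using that $\tfrac{d-1}{2d}-\delta<1$ so that $e^{-c_\epsilon k}$ is negligible against $\exp(-c'k^{\frac{d-1}{2d}-\delta})$, yields the claimed estimate, with the leading constant and the factor $(1-\epsilon)/q$ absorbed into $c_\delta$ (which therefore depends on $d,q,\delta$).

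The one bookkeeping subtlety is that, $K_1$ and $K_2$ being merely disjoint, we only have $k\le|\mathsf{L}_n|/2$, so $\mathbb{E}|E|=k/q$ can be as large as $|C_u^n|/2$ and a small upward fluctuation may push $r$ past $|C_u^n|/2$, outside the range of Lemma~\ref{treematch}. This is handled just as in the proof of Lemma~\ref{treematch} itself, by passing to complements: $E\sim_W F$ if and only if $C_u^n\setminus E\sim_W C_v^n\setminus F$, and $|C_u^n|-r\ge(1-\epsilon)|C_u^n|/2\ge(1-\epsilon)k/q$, so applying Lemma~\ref{treematch} to the complements gives the same bound. I do not expect any genuine obstacle here: the proof is a routine adaptation of that of Corollary~\ref{cut1}, the only new ingredient being the block-diagonal invariance observation of the first paragraph, which for two disjoint source sets plays the role of the conditional-independence fact used there for a single set.
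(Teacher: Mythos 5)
Your proposal is correct and follows essentially the same route as the paper's proof of Corollary \ref{cut2}: conditional independence and uniformity of the two intersection sets given their sizes, decomposition over the common size $r$, the concentration bound of Fact \ref{kset}, and Lemma \ref{treematch} for the bulk term (with the complement trick for $r>|C_u^n|/2$). The only point to tighten is your complement step: the inequality you want for the complement size requires an upper bound on $r$, so you should also discard the upper-tail event $|E|>k/q+\epsilon k$ (again exponentially unlikely by Fact \ref{kset}), i.e.\ restrict to the two-sided window $[k/q-\epsilon k,\,k/q+\epsilon k]$ exactly as the paper does; then, since $|C_u^n|=d^n\ge 2k/q$, one gets $|C_u^n|-r\ge(1-q\epsilon)k/q$ and the application of Lemma \ref{treematch} to the complements goes through.
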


\begin{proof}

The proof is similar to Corollary \ref{cut1}. Since $\sigma$ is
uniform, conditioned on the event $\left|(K_{1}\cdot\sigma)\cap C_{u}^{n}\right|=\left|\left(K_{2}\cdot\sigma\right)\cap C_{v}^{n}\right|=r$,
the distributions of $\left(K_{1}\cdot\boldsymbol{\sigma}\right)\cap C_{u}^{n}$
and $\left(K_{2}\cdot\boldsymbol{\sigma}\right)\cap C_{v}^{n}$ are
independent and uniform in sets of size $r$ in $C_{u}^{n}$ and $C_{v}^{n}$
respectively. Let $p(r,n)$ be as in the proof of Corollary \ref{cut1}.
Then we have
\begin{align*}
 & \mathbb{P}\left(\left(K_{1}\cdot\boldsymbol{\sigma}\right)\cap C_{u}^{n}\sim_{W}\left(K_{2}\cdot\boldsymbol{\sigma}\right)\cap C_{v}^{n}\right)\\
= & \sum_{r}\mathbb{P}\left(\left|\left(K_{1}\cdot\boldsymbol{\sigma}\right)\cap C_{u}^{n}\right|=\left|\left(K_{2}\cdot\boldsymbol{\sigma}\right)\cap C_{v}^{n}\right|=r\right)p(r,n)\\
\le & \mathbb{P}\left(\left|\left(K_{1}\cdot\boldsymbol{\sigma}\right)\cap C_{u}^{n}\right|\notin\left[k/q-\epsilon k,k/q+\epsilon k\right]\right)+\max_{r\in\left[k/q-\epsilon k,k/q+\epsilon k\right]}p(r,n).
\end{align*}
The statement follows from Fact \ref{kset} and Lemma \ref{treematch}. 

\end{proof}

\section{Containment of rigid stabilizers when $F$ is transitive on $D$\label{sec:transitive}}

The goal of this section is to prove Proposition \ref{containRO},
assuming $F$ is transitive. The case of intransitive $F$ brings
in the complication that the quotient $S_{F}^{A}(n)$ is a product
of symmetric groups instead of ${\rm Sym}\left(\partial B_{n}(A)\right)$.
This is not hard to handle (see the next section), but for clarity
we present the argument for the transitive case first. 

Throughout this section $F$ is assumed to be transitive on $D$.
Let $\mu$ be an IRS of $\mathcal{N}_{F}$. Recall the setting and
notations in Section \ref{sec:Induced}. Suppose $A,u,v$ are such
that $\mu\left(\Theta_{u,v}^{A}\right)>0$, where $A$ is a finite
complete subtree, $u,v$ are two distinct vertices in $\partial A$,
and the event $\Theta_{u,v}^{A}$ is defined in (\ref{eq:Auv}). Fix
such a triple $A,u,v$. Go down $n$ more levels and consider the
induced IRS $\bar{\mu}_{n}^{A}$ in the finite group $S_{F}^{A}(n)={\rm Sym}\left(\partial B_{n}(A)\right)$,
that is, $\bar{\mu}_{n}^{A}$ is the pushforward of $\mu$ under the
map $H\mapsto\pi_{n}\left(H\cap O_{F}^{A}(n)\right)$. For $\Gamma<{\rm Sym}(\partial B_{n}(A))$,
let $\nu_{\Gamma}$ be the IRS of ${\rm Sym}(\partial B_{n}(A))$
which is uniform on conjugates of $\Gamma$. Denote the ergodic decomposition
of $\bar{\mu}_{n}^{A}$ by 
\[
\bar{\mu}_{n}^{A}=\sum_{i=1}^{I_{n}}\lambda_{i}\nu_{\Gamma_{i}},
\]
where $\nu_{\Gamma_{i}}$ is the IRS associated with the subgroup
$\Gamma_{i}<{\rm Sym}\left(\partial B_{n}(A)\right)$ and $I_{n}$
is a finite indexing set. 

Recall the event 
\[
\Theta_{u,v}^{A,n}=\left\{ H\le{\rm Sym}(\partial B_{n}(A)):\ \exists h\in H\cap\pi_{n}\left(O_{F}^{A}(0)\right)\mbox{ s.t. }C_{v}^{n}=C_{u}^{n}\cdot h\right\} 
\]
 as defined in (\ref{eq:Auvn}) and the fact that 
\[
\bar{\mu}_{n}^{A}\left(\Theta_{u,v}^{A,n}\right)\ge\mu\left(\Theta_{u,v}^{A}\right).
\]
Given a subgroup $\Gamma<{\rm Sym}(\partial B_{n}(A))$, consider
the probability $\nu_{\Gamma}\left(\Theta_{u,v}^{A,n}\right)$. We
want to show that if $\Gamma$ does not contain a large alternating
subgroup, then $\nu_{\Gamma}\left(\Theta_{u,v}^{A,n}\right)$ is small. 

One ingredient that goes into the bounds is the following direct consequence
of the subgroup index Lemma \ref{E1}, which is useful to subgroups
of relatively small index.

\begin{lemma}\label{index}

Suppose $Q$ is a subset of $\left\{ \gamma\in{\rm Sym}(X):\ U\cdot\gamma=V\right\} $.
Let $\Gamma<{\rm Sym}(X)$ be any subgroup. Then 
\[
\mathbb{P}_{\nu_{\Gamma}}(H\cap Q\neq\emptyset)\le\frac{|\Gamma|\cdot|Q{}_{U}|}{\left|{\rm Sym}(U)\right|},
\]
where $Q_{U}=\{\gamma|_{U}:\ \gamma\in Q\}.$

\end{lemma}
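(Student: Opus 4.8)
The plan is to deduce Lemma~\ref{index} directly from the subgroup index Lemma~\ref{E1}, applied to the natural faithful action of $\mathrm{Sym}(X)$ on the finite set $X$. First I would note that $\nu_\Gamma$ is genuinely an IRS of $\mathrm{Sym}(X)$: it is the law of $\boldsymbol{g}^{-1}\Gamma\boldsymbol{g}$ for $\boldsymbol{g}$ uniform in $\mathrm{Sym}(X)$, hence conjugation invariant. We may assume $Q\neq\emptyset$ (otherwise both sides vanish), so $|U|=|V|$; as in the intended application I take $U,V$ nonempty and disjoint, so $\mathrm{Sym}(X)_{U\to V}\neq\emptyset$ and all hypotheses of Lemma~\ref{E1} are in force. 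The key observation is the set inclusion of events $\{H:H\cap Q\neq\emptyset\}\subseteq\{H:\bar H_{U\to V}\cap Q_U\neq\emptyset\}$ in the notation \eqref{eq:SHU}--\eqref{eq:BSHU}: if $h\in H\cap Q$ then $U\cdot h=V$, so $h\in H_{U\to V}$ and $h|_U\in\bar H_{U\to V}$, while $h\in Q$ gives $h|_U\in Q_U$. Thus it suffices to bound $\mathbb{P}_{\nu_\Gamma}(\bar H_{U\to V}\cap Q_U\neq\emptyset)$, which is exactly the quantity estimated by Lemma~\ref{E1} applied with acting group $\mathrm{Sym}(X)$, IRS $\nu_\Gamma$, open sets $U,V$, and the finite set $A=Q_U$ (a finite set of bijections $U\to V$, finite because $X$ is).

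It then remains to identify the index appearing on the right-hand side of \eqref{eq:index}. Here $R_{\mathrm{Sym}(X)}(U)=\mathrm{Sym}(U)$, and for $\nu_\Gamma$-a.e.\ $H$ we have $|H|=|\Gamma|$, so the setwise stabilizer of $U$ in $H$ — and hence its image $\bar H_{U\to U}$ under restriction to $U$, which is a subgroup of $\mathrm{Sym}(U)$ — has order at most $|\Gamma|$; therefore $|\mathrm{Sym}(U):\bar H_{U\to U}|\ge|\mathrm{Sym}(U)|/|\Gamma|$ for $\nu_\Gamma$-a.e.\ $H$. Substituting this into \eqref{eq:index}, bounding the minimum by its first argument and the indicator by $1$, gives
\[
\mathbb{P}_{\nu_\Gamma}\bigl(H\cap Q\neq\emptyset\bigr)\ \le\ \mathbb{P}_{\nu_\Gamma}\bigl(\bar H_{U\to V}\cap Q_U\neq\emptyset\bigr)\ \le\ \mathbb{E}_{\nu_\Gamma}\!\left[\frac{|Q_U|}{|\mathrm{Sym}(U):\bar H_{U\to U}|}\right]\ \le\ \frac{|\Gamma|\,|Q_U|}{|\mathrm{Sym}(U)|},
\]
which is the claimed estimate.

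There is no serious obstacle here; all the content is in Lemma~\ref{E1}, and the rest is bookkeeping. The one point that requires care is the \emph{direction} of the index inequality: Lemma~\ref{E1} is useful precisely when $|R_\Gamma(U):\bar H_{U\to U}|$ is large, so one needs the a priori lower bound on this index, and this comes for free because $H$ is literally a $\mathrm{Sym}(X)$-conjugate of $\Gamma$, whence $|\bar H_{U\to U}|\le|H|=|\Gamma|$. One should also confirm that $\bar H_{U\to U}$ really is a subgroup of $\mathrm{Sym}(U)$ (being the image under restriction of the setwise stabilizer of $U$ in $H$), so that the index is meaningful, and dispose of the degenerate cases $Q=\emptyset$ and $U=\emptyset$ separately, both of which are immediate.
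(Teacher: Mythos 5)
Your proof is correct and is essentially the paper's own argument: apply Lemma \ref{E1} to the IRS $\nu_\Gamma$ for the action of ${\rm Sym}(X)$ on $X$ with $A=Q_U$, note that $R_{{\rm Sym}(X)}(U)={\rm Sym}(U)$, and lower-bound the index via $|\bar H_{U\to U}|\le|H|=|\Gamma|$. The extra bookkeeping you include (the event inclusion $\{H\cap Q\neq\emptyset\}\subseteq\{\bar H_{U\to V}\cap Q_U\neq\emptyset\}$ and the degenerate cases) is implicit in the paper's shorter write-up and raises no issues.
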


\begin{proof}

The rigid stabilizer of $U$ in ${\rm Sym}(X)$ is ${\rm Sym}(U)$.
Apply Lemma \ref{E1} to the IRS $\nu_{\Gamma}$, we have 
\[
\mathbb{P}_{\nu_{\Gamma}}(H\cap Q\neq\emptyset)\le\mathbb{E}_{\nu_{\Gamma}}\left[\frac{|Q_{U}|}{\left|{\rm Sym}(U):\left(\bar{H}_{U\to U}\cap{\rm Sym}\left(U\right)\right)\right|}\right].
\]
Since $\left|\bar{H}_{U\to U}\right|\le|H|=|\Gamma|$, we have 
\[
\left|{\rm Sym}(U):\left(\bar{H}_{U\to U}\cap{\rm Sym}\left(U\right)\right)\right|\ge\left|{\rm Sym}(U)\right|/|\Gamma|.
\]
The statement follows. 

\end{proof}

Write 
\[
q=|\partial A|\mbox{ and }k_{n}=qd^{n}=\left|\partial B_{n}(A)\right|.
\]
The group $O_{F}^{A}(0)$ is a subgroup of the semi-direct product
$W=\left(\oplus_{v\in\partial A}{\rm Aut}(\mathsf{T}_{v})\right)\rtimes{\rm Sym}\left(\partial A\right)$
as in the setting of Section \ref{sec:Tree-match}. We suppress reference
to $A$ in the notations $q,k_{n}$ and $W$, understanding that $A$
is fixed through the calculations. Denote by $Q_{u,v}^{A,n}$ the
subset 
\[
Q_{u,v}^{A,n}=\left\{ g\in\pi_{n}\left(O_{F}^{A}(0)\right):\ C_{v}^{n}=C_{u}^{n}\cdot g\right\} .
\]
 The size of the set of partial homeomorphisms $\left\{ g|_{C_{u}^{n}}:\ g\in Q_{u,v}^{A,n}\right\} $
is bounded by $|F|^{d^{n}}.$ Then by Lemma \ref{index}, we have
\begin{equation}
\nu_{\Gamma}\left(\Theta_{u,v}^{A,n}\right)=\mathbb{P}_{\nu_{\Gamma}}\left(H\cap Q_{u,v}^{A,n}\neq\emptyset\right)\le\frac{|\Gamma|\cdot|F|^{d^{n}}}{(d^{n})!}.\label{eq:size1}
\end{equation}
This shows that $\nu_{\Gamma}\left(Q_{u,v}^{A,n}\right)$ is small,
if the size of $\Gamma$ is much smaller than $(d^{n})!/|F|^{d^{n}}$.
Recall that the size of ${\rm Sym}(\partial B_{n}(A))$ is $\left(qd^{n}\right)$!.
As remarked earlier in the Introduction, this kind of bound is similar
to, but weaker than, the co-volume estimate used in the proof of absence
of lattices in \cite{BCGM}. 

Now consider in more detail the structure of $\Gamma$. The bounds
for $\nu_{\Gamma}\left(\Theta_{u,v}^{A,n}\right)$ are divided into
three cases below. The estimates we show here are far from being sharp,
but sufficient for the purpose of proving Proposition \ref{containRO}. 

To apply bounds in Section \ref{sec:Tree-match}, we fix a number
in $\left(0,\frac{d-1}{2d}\right)$, for instance, let
\[
\alpha=\frac{d-1}{4d}.
\]
In what follows, $\sigma$ denotes a random permutation with uniform
distribution in ${\rm Sym}\left(\partial B_{n}(A)\right)$. Denote
by $\mathsf{t}_{1},\ldots,\mathsf{t}_{r}$ the sizes of transitive
components of $\Gamma$ on $\partial B_{n}(A)$ and denote by $\mathsf{t}_{\Gamma}$
the maximum of $\mathsf{t}_{1},\ldots,\mathsf{t}_{r}$. 

\begin{lemma}[Case I: intransitive without giant component]\label{caseI}

Suppose $\Gamma$ is not transitive and denote by with $\mathsf{t}_{\Gamma}$
the maximum of sizes of transitive components. Then there exists a
constant $c,C>0$ depending only on $d,q$ such that 
\[
\nu_{\Gamma}\left(\Theta_{u,v}^{A,n}\right)\le C\exp\left(-c(k_{n}-\mathsf{t}_{\Gamma})^{\alpha}\right)+C\exp\left(-ck_{n}^{\alpha/2q}\right).
\]
\end{lemma}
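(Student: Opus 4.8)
The plan is to bound $\nu_{\Gamma}\left(\Theta_{u,v}^{A,n}\right)$ by distinguishing according to the size of the largest transitive component $\mathsf{t}_{\Gamma}$ of $\Gamma$ on $\partial B_n(A)$, and feeding the relevant case into the auxiliary estimates of Section \ref{sec:Tree-match}. Recall that a random conjugate of $\Gamma$ under $\nu_\Gamma$ is $\sigma^{-1}\Gamma\sigma$ for a uniform $\sigma \in {\rm Sym}(\partial B_n(A))$, so the transitive components of the random subgroup are exactly the sets $\mathsf{O}\cdot\sigma$ where $\mathsf{O}$ ranges over the (fixed) components of $\Gamma$. The event $\Theta_{u,v}^{A,n}$ requires that the random subgroup contain an element $h \in \pi_n(O_F^A(0))$ with $C_u^n \cdot h = C_v^n$; in particular such an $h$ must carry $C_u^n$ into $C_v^n$, so $C_u^n$ and $C_v^n$ must meet the same transitive components of the random subgroup, and within each component the portion lying in $C_u^n$ must be carried to the portion lying in $C_v^n$. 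Since $h$ lies in $\pi_n(O_F^A(0)) \subset W$ (where $W$ acts as a rooted tree automorphism group on each $\mathsf{T}_v$, $v \in \partial A$), such an $h$ in particular witnesses $\left(K \cdot \sigma\right)\cap C_u^n \sim_W \left(K\cdot\sigma\right)\cap C_v^n$ for every component-preimage $K$ of $\Gamma$ (here $K = \mathsf{O}$ is a subset of $\partial B_n(A) = \mathsf{L}_n$).

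\medskip

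First I would split into two sub-cases. \textbf{Sub-case (a): some component $\mathsf{O}$ of $\Gamma$ has $|\mathsf{O}| > k_n/2$}, i.e.\ there is a ``giant'' component of size $\mathsf{t}_{\Gamma}$, but still $\mathsf{t}_\Gamma < k_n$ since $\Gamma$ is intransitive. Let $K = \mathsf{O}$ be the complement of the union of the other components, so $|K| = k_n - \mathsf{t}_\Gamma$... actually it is cleaner to take $K$ to be the union of the non-giant components, $|K| = k_n - \mathsf{t}_\Gamma \le k_n/2$. For $h$ to carry $C_u^n$ to $C_v^n$ component-wise, in particular the part of $C_u^n$ landing in the random small components must be carried by an element of $W$ to the part of $C_v^n$ in those components; splitting by individual small components and applying Corollary \ref{cut1} to $K \cdot \sigma$ gives $\mathbb{P}\left(\left(K\cdot\sigma\right)\cap C_u^n \sim_W \left(K\cdot\sigma\right)\cap C_v^n\right) \le C\exp\left(-c(k_n - \mathsf{t}_\Gamma)^{\frac{d-1}{2d}-\delta}\right)$; choosing $\delta$ so that $\frac{d-1}{2d}-\delta = \alpha = \frac{d-1}{4d}$ yields the first term $C\exp\left(-c(k_n-\mathsf{t}_\Gamma)^\alpha\right)$. \textbf{Sub-case (b): every component of $\Gamma$ has size $\le k_n/2$.} Then I would pick any two distinct components $\mathsf{O}_1,\mathsf{O}_2$ of $\Gamma$ (there are at least two), and note that $C_u^n$ must meet $\mathsf{O}_1\cdot\sigma$ (say) with a substantial mass — by Fact \ref{kset} the intersection $|(\mathsf{O}_1\cdot\sigma)\cap C_u^n|$ concentrates around $|\mathsf{O}_1|/q \gtrsim$ some polynomial fraction — so, truncating $\mathsf{O}_1,\mathsf{O}_2$ to equal sizes and applying Corollary \ref{cut2}, the probability that a component-to-component $W$-matching between $C_u^n$ and $C_v^n$ exists is at most $\exp\left(-c_\delta k'^{\frac{d-1}{2d}-\delta}\right)$ where $k'$ is (a lower bound on) the relevant intersection size, of order $k_n^{1}$ up to the $1/q$ factor; after accounting for the concentration failure probability (exponentially small in $k_n$) and re-indexing $\delta$, this is absorbed into $C\exp\left(-ck_n^{\alpha/2q}\right)$. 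The factor $1/2q$ in the exponent is the slack coming from first dividing the component mass by $q$ (via Fact \ref{kset}) and then applying the $\frac{d-1}{2d}$-power from Corollary \ref{cut2}, with room to spare.

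\medskip

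The two sub-cases are unioned, and since in sub-case (a) the second term $C\exp(-ck_n^{\alpha/2q})$ is harmless to add, and in sub-case (b) the first term is likewise harmless, the combined bound $\nu_\Gamma\left(\Theta_{u,v}^{A,n}\right) \le C\exp\left(-c(k_n-\mathsf{t}_\Gamma)^\alpha\right) + C\exp\left(-ck_n^{\alpha/2q}\right)$ holds in all cases, with $c,C$ depending only on $d,q$ (the dependence on $A$ enters only through $q = |\partial A|$, which is fixed). I would also need to check the bookkeeping that the number of components, the number of choices of $\gamma \in {\rm Sym}(d)$ or of component-permutations, and similar combinatorial prefactors are all subexponential (indeed bounded in terms of $d$ and $q$ alone after the reductions, or at worst polynomial in $k_n$) so that they are swallowed by the stretched-exponential decay — this is the routine part.

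\medskip

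The main obstacle I anticipate is sub-case (b): making precise the reduction from ``$h \in H \cap \pi_n(O_F^A(0))$ carries $C_u^n$ to $C_v^n$'' to a clean statement about $W$-orbit equivalence of intersections of a \emph{single} randomly-permuted component with $C_u^n$ and $C_v^n$ to which Corollary \ref{cut2} applies directly. The subtlety is that $h$ permutes the $\partial A$-indexed subtrees among themselves (it lies in $W = (\oplus_v {\rm Aut}(\mathsf{T}_v)) \rtimes {\rm Sym}(\partial A)$), so ``$C_u^n \cdot h = C_v^n$'' already forces the $\partial A$-coordinate of $h$ to send the vertex $u$ to $v$, and then the rooted-automorphism part gives the $\sim_W$ relation between $h$-source and $h$-target within matched components; one must be careful that the components of the random subgroup $\sigma^{-1}\Gamma\sigma$ are $\mathsf{O}\cdot\sigma$, that $h \in \sigma^{-1}\Gamma\sigma$ preserves the partition into these, and hence that for the component(s) whose $\sigma$-image meets $C_u^n$ we get the needed orbit-equivalence after restriction. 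Getting the sizes right — which intersection size controls the exponent, and ensuring it is $\Omega(k_n/q)$ with high probability via Fact \ref{kset} rather than something smaller — is where care is needed, but no genuinely new idea beyond Corollaries \ref{cut1} and \ref{cut2} should be required.
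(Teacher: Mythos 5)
Your sub-case (a) (giant component: apply Corollary \ref{cut1} to the union of the non-giant components) matches the paper's treatment of the regime $\mathsf{t}_{\Gamma}>k_{n}/2$ and is fine. The genuine gap is in sub-case (b): your argument implicitly assumes that when no component exceeds $k_{n}/2$, the largest component still carries a constant fraction of $\partial B_{n}(A)$ (you speak of an intersection ``of order $k_{n}$ up to the $1/q$ factor''). That is false in general: $\Gamma$ may have all orbits of size $2$ or, more generally, $\mathsf{t}_{\Gamma}$ growing slower than any power of $k_{n}$, in which case Fact \ref{kset} and Corollaries \ref{cut1}--\ref{cut2} applied to a single component give only a constant bound, since the parameter $k$ in those estimates is the component (or intersection) size, not $k_{n}$. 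The paper handles exactly this regime by a completely different mechanism: if $\mathsf{t}_{\Gamma}\le k_{n}^{1/2q}$ then $|\Gamma|\le\mathsf{t}_{1}!\cdots\mathsf{t}_{r}!$ is so small that the subgroup-index bound (\ref{eq:size1}) from Lemma \ref{index} (i.e.\ Lemma \ref{E1}) gives $\nu_{\Gamma}(\Theta_{u,v}^{A,n})\le e^{-\frac{1}{2q}k_{n}\log\frac{k_{n}}{C}}$; Corollary \ref{cut1} applied to the largest component is only used in the intermediate regime $k_{n}^{1/2q}<\mathsf{t}_{\Gamma}\le k_{n}/2$, where it yields $C\exp(-c\mathsf{t}_{\Gamma}^{\alpha})\le C\exp(-ck_{n}^{\alpha/2q})$. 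This also shows your explanation of the exponent $\alpha/2q$ is off: it records the threshold $\mathsf{t}_{\Gamma}\ge k_{n}^{1/2q}$ at which the counting argument hands over to the orbit-matching argument, not a loss of a factor $1/q$ from Fact \ref{kset} (dividing by $q$ would only change constants, giving $k_{n}^{\alpha}$). Without the counting/index ingredient your proof cannot produce any decay for small-orbit $\Gamma$, so the stated bound is not established.

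A secondary error: your plan to ``pick two distinct components $\mathsf{O}_{1},\mathsf{O}_{2}$'' and apply Corollary \ref{cut2} misreads the structure. Every element of $\sigma^{-1}\Gamma\sigma$ preserves each orbit $\mathsf{O}\cdot\sigma$ setwise (orbits are invariant under the whole group), so an $h\in H\cap\pi_{n}\left(O_{F}^{A}(0)\right)$ with $C_{v}^{n}=C_{u}^{n}\cdot h$ forces, for each single component, the equivalence $\left(\mathsf{O}\cdot\sigma\right)\cap C_{u}^{n}\sim_{W}\left(\mathsf{O}\cdot\sigma\right)\cap C_{v}^{n}$; there is no component-to-component matching between distinct orbits. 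Corollary \ref{cut1} is the correct tool here, and Corollary \ref{cut2} is needed only later (Case III), where blocks of imprimitivity, unlike orbits, can be permuted among themselves.
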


\begin{proof}

Denote by $Y_{1},\ldots,Y_{r}$ the transitive components of $\Gamma$,
with $|Y_{1}|=\mathsf{t}_{\Gamma}$. The size of $\Gamma$ is at most
$\mathsf{t}_{1}!\ldots\mathsf{t}_{r}!$. Then by (\ref{eq:size1})
and Stirling's approximation (\ref{eq:stirling}), we have that if
$\mathsf{t}_{\Gamma}\le k_{n+m}^{1/2q}$, then there exists a constant
$C>0$ depending only on $q,d$ such that 
\[
\nu_{\Gamma}\left(\Theta_{u,v}^{A,n}\right)\le e^{-\frac{1}{2q}k_{n}\log\frac{k_{n}}{C}}.
\]

If $k_{n}^{1/2q}<\mathsf{t}_{\Gamma}\le k_{n}/2$, then in order for
$\sigma^{-1}\Gamma\sigma$ to contain an element $h\in\pi_{n}(O_{F}^{A}(0))$
with $C_{v}^{n}=C_{u}^{n}\cdot h$, it is necessary that for each
transitive component $Y_{i}\cdot\sigma$ of $\sigma^{-1}\Gamma\sigma$,
the intersections with $C_{u}$ and $C_{v}$ are in the same $O_{F}^{A}(0)$-orbit,
that is, for each $1\le i\le r$,
\[
\left(Y_{i}\cdot\sigma\right)\cap C_{u}^{n}\sim_{O_{F}^{A}(0)}\left(Y_{i}\cdot\sigma\right)\cap C_{v}^{n}.
\]
Since $O_{F}^{A}(0)\le W$, it is necessary then they are in the same
$W$-orbit. Thus by Corollary \ref{cut1} applied to the maximum component
$Y_{1}\cdot\sigma$, we have 
\begin{align*}
\nu_{\Gamma}\left(\Theta_{u,v}^{A,n}\right) & \le\mathbb{P}\left(\left(Y_{1}\cdot\sigma\right)\cap C_{u}^{n}\sim_{W}\left(Y_{1}\cdot\sigma\right)\cap C_{v}^{n}\right)\\
 & \le C\exp\left(-c\mathsf{t}_{\Gamma}^{\alpha}\right)\le C\exp\left(-ck_{n}^{\alpha/2q}\right).
\end{align*}

If $\mathsf{t}_{\Gamma}>k_{n}/2$, then consider the compliment of
$Y_{1}$ and the same reasoning as above implies that the intersection
of $\partial B_{n}(A)\setminus Y_{1}\cdot\sigma$ with $C_{u}^{n}$
and $C_{v}^{n}$ must be in the same $W$-orbit, therefore by Corollary
\ref{cut1}, we have 
\begin{align*}
\nu_{\Gamma}\left(\Theta_{u,v}^{A,n}\right) & \le\mathbb{P}\left(\left|\left(\partial B_{n}(A)\setminus Y_{1}\cdot\sigma\right)\cap C_{u}\right|\sim_{O_{n}}\left|\left(\partial B_{n}(A)\setminus Y_{1}\cdot\sigma\right)\cap C_{v}\right|\right)\\
 & \le C\exp\left(-c(k_{n}-\mathsf{t}_{\Gamma})^{\alpha}\right).
\end{align*}
The statement is obtained by combining these three cases. 

\end{proof}

Now consider the case where $\mathsf{t}_{\Gamma}$ is large and in
particular $\mathsf{t}_{\Gamma}>k_{n}/2$. We refer to the largest
transitive component as the giant component and denote it by $Y_{1}$.
Denote by $\bar{\Gamma}$ the projection of $\Gamma$ to permutations
of the giant component. If $\bar{\Gamma}$ is primitive but does not
contain ${\rm Alt}(Y_{1})$, then the size of $\bar{\Gamma}$ is small
and we can apply Lemma \ref{index} again. For our purposes it suffices
to use Praeger-Saxl's bound \cite{prager-saxl}: if $L\le{\rm Sym}(X)$
is primitive but does not contain ${\rm Alt}(X),$then $|L|\le4^{|X|}$.
Stronger bounds which are sub-exponential in $|X|$ are due to Babai
\cite{babai1,babai2}. Note that these results do not rely on classification
of finite simple groups. 

\begin{lemma}[Case II: primitive in the giant component but doesn't contain Alt]\label{caseII}

Suppose $\mathsf{t}_{\Gamma}>\left(1-\frac{1}{2q}\right)k_{n}$ and
the projection $\bar{\Gamma}$ in the giant component $Y_{1}$ is
primitive but doesn't contain ${\rm Alt}(Y_{1})$. Then there exists
a constant $C$ only depending on $q$ such that 
\[
\nu_{\Gamma}\left(\Theta_{u,v}^{A,n}\right)\le e^{-\frac{1}{2q}k_{n}\log\frac{k_{n}}{C}}.
\]

\end{lemma}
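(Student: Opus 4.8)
The plan is to leverage the same mechanism as in Lemma~\ref{caseI} together with the Praeger--Saxl bound to control the size of $\Gamma$ and then invoke the subgroup index Lemma~\ref{index}. First I would estimate $|\Gamma|$. Since $\mathsf{t}_\Gamma > (1-\tfrac{1}{2q})k_n$, the complement of the giant component $Y_1$ has size $k_n - \mathsf{t}_\Gamma < \tfrac{1}{2q}k_n$, so the action of $\Gamma$ off $Y_1$ contributes a factor at most $(\tfrac{1}{2q}k_n)!$. On the giant component, $\Gamma$ surjects onto $\bar\Gamma \le {\rm Sym}(Y_1)$, and the kernel of this surjection injects into the symmetric group on $\partial B_n(A)\setminus Y_1$, hence has size at most $(\tfrac{1}{2q}k_n)!$ as well. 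By the Praeger--Saxl bound~\cite{prager-saxl}, since $\bar\Gamma$ is primitive and does not contain ${\rm Alt}(Y_1)$, we have $|\bar\Gamma| \le 4^{|Y_1|} \le 4^{k_n}$. Combining, $|\Gamma| \le 4^{k_n}\bigl((\tfrac{1}{2q}k_n)!\bigr)^2$.

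Next I would feed this into the index bound~\eqref{eq:size1}, which gives
\[
\nu_\Gamma\left(\Theta_{u,v}^{A,n}\right) \le \frac{|\Gamma|\cdot|F|^{d^n}}{(d^n)!}.
\]
Here $d^n = k_n/q$, so the denominator $(d^n)! = (k_n/q)!$ grows like $\exp\bigl(\tfrac{1}{q}k_n\log k_n\bigr)$ up to lower-order terms, by Stirling~\eqref{eq:stirling}. In the numerator, $|F|^{d^n} = |F|^{k_n/q} = \exp(O(k_n))$ is merely exponential in $k_n$; the factor $4^{k_n}$ is likewise exponential; and each $(\tfrac{1}{2q}k_n)!$ contributes at most $\exp\bigl(\tfrac{1}{2q}k_n\log k_n\bigr)$ up to $O(k_n)$ terms, so the two such factors together contribute $\exp\bigl(\tfrac{1}{q}k_n\log k_n + O(k_n)\bigr)$. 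Thus the numerator is $\exp\bigl(\tfrac{1}{q}k_n\log k_n + O(k_n)\bigr)$, which is of the \emph{same} leading order as the denominator --- this is exactly where care is needed.

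The resolution is that the bound in Lemma~\ref{index} actually uses the restriction $Q_U$ of partial homeomorphisms to a single coordinate set; taking $U = C_u^n$ with $|U| = d^n$, the denominator in the index estimate should really be compared against $|{\rm Sym}(U)| = (d^n)!$ while only the \emph{restriction} of $\bar H_{U\to U}$ matters, and one should also use that the two factors $(\tfrac{1}{2q}k_n)!$ overcount: the kernel-times-complement contribution is genuinely bounded by $(k_n-\mathsf{t}_\Gamma)!\le(\tfrac{1}{2q}k_n)!$ only \emph{once}, not squared, because the off-$Y_1$ permutations and the kernel both live inside ${\rm Sym}(\partial B_n(A)\setminus Y_1)$ and their product is still a subgroup of it. So in fact $|\Gamma|\le 4^{k_n}\cdot(\tfrac{1}{2q}k_n)!$, giving a numerator of order $\exp\bigl(\tfrac{1}{2q}k_n\log k_n + O(k_n)\bigr)$ against a denominator of order $\exp\bigl(\tfrac{1}{q}k_n\log k_n\bigr)$. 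The ratio is then at most $\exp\bigl(-\tfrac{1}{2q}k_n\log k_n + O(k_n)\bigr) \le e^{-\frac{1}{2q}k_n\log\frac{k_n}{C}}$ for a suitable constant $C = C(q)$, absorbing the linear-in-$k_n$ slack (from $4^{k_n}$, $|F|^{k_n/q}$, and the Stirling corrections) into the constant $C$. This is the claimed estimate.

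The main obstacle I anticipate is precisely the bookkeeping in this last paragraph: one must be scrupulous that the permutations of $\Gamma$ acting outside the giant component contribute only a \emph{single} factorial of $(k_n-\mathsf{t}_\Gamma)$ rather than a squared one, since a naive ``kernel $\times$ complement-action $\times$ $\bar\Gamma$'' decomposition double-counts. Once the size bound $|\Gamma|\le 4^{k_n}(k_n-\mathsf{t}_\Gamma)!$ is secured, with $k_n-\mathsf{t}_\Gamma < \tfrac{1}{2q}k_n$, the Stirling comparison against $(k_n/q)!$ is routine and yields the stated super-exponential decay; one just needs $q\ge 2$ (equivalently $|\partial A|\ge 2$, which holds since $u\neq v$ are both leaves of $A$) to ensure $\tfrac{1}{2q} < \tfrac{1}{q}$ so that the leading $\tfrac{1}{q}k_n\log k_n$ term from the denominator strictly dominates the $\tfrac{1}{2q}k_n\log k_n$ term from the numerator.
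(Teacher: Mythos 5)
Your final argument is correct and is essentially the paper's proof: bound $|\Gamma|$ by the Praeger--Saxl estimate on $\bar\Gamma$ times a single factor $(k_n-\mathsf{t}_\Gamma)!$ for the kernel of $\Gamma\to\bar\Gamma$ (which embeds in ${\rm Sym}(\partial B_n(A)\setminus Y_1)$, so there is no squared factorial), then plug into (\ref{eq:size1}) and compare with $(d^n)!=(k_n/q)!$ via Stirling. The detour about restrictions in Lemma~\ref{index} is unnecessary, but the self-correction to $|\Gamma|\le 4^{k_n}(k_n-\mathsf{t}_\Gamma)!$ lands exactly on the paper's bound $|\Gamma|\le (k_n-\mathsf{t}_\Gamma)!\cdot 4^{\mathsf{t}_\Gamma}$ up to the constant absorbed in $C$.
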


\begin{proof}

Under the assumptions of the lemma, by the Praeger-Saxl's bound, we
have 
\[
\left|\Gamma\right|\le\left(k_{n}-\mathsf{t}_{\Gamma}\right)!\cdot4^{\mathsf{t}_{\Gamma}}.
\]
The statement follows then from (\ref{eq:size1}). 

\end{proof}

It remains to consider the case of imprimitive $\bar{\Gamma}$. Given
$\bar{\Gamma}$, which is transitive on the giant component $Y_{1}$,
let $Z_{1},\ldots,Z_{\mathsf{p}}$ be the sets in the system of imprimitivity
for $\bar{\Gamma}$.

\begin{lemma}[Case III: imprimitive in the giant component]\label{caseIII}

Suppose $\mathsf{t}_{\Gamma}>(1-\frac{1}{2q})k_{n}$ and $\bar{\Gamma}$
is imprimitive in the giant transitive component $Y_{1}$. Then 
\[
\nu_{L}\left(\Theta_{u,v}^{A,n}\right)\le Ck_{n}\exp\left(-ck_{n}^{\alpha/3q}\right),
\]
where $c,C$ are constants only depending on $q$ and $d$. 

\end{lemma}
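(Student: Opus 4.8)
The plan is to reduce the imprimitive case to one of the already-treated cases by finding a suitable invariant under the group $W$ (really under $O_{F}^{A}(0)\le W$) that must match on the $C_{u}^{n}$ and $C_{v}^{n}$ sides, and to which the tree-matching estimates of Section \ref{sec:Tree-match} or Fact \ref{kset} apply. Concretely, let $Z_{1},\dots,Z_{\mathsf p}$ be a block system for $\bar\Gamma$ on the giant component $Y_1$. After conjugating by the uniform random $\sigma$, each block $Z_j\cdot\sigma$ is a uniformly random set of size $k_n\mathsf t_\Gamma^{-1}\mathsf p^{-1}\cdot\mathsf t_\Gamma$... more usefully, the \emph{partition} $\{Z_j\cdot\sigma\}$ is a uniformly random partition of $Y_1\cdot\sigma$ into $\mathsf p$ equal parts. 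For $\sigma^{-1}\Gamma\sigma$ to contain an element $h\in\pi_n(O_F^A(0))$ with $C_v^n=C_u^n\cdot h$, that element must carry the block system on the $C_u^n$-side to the block system on the $C_v^n$-side; in particular the multiset of sizes $\{|Z_j\cdot\sigma\cap C_u^n|\}_j$ must equal the multiset $\{|Z_j\cdot\sigma\cap C_v^n|\}_j$, and moreover, restricted to each block, the pieces $Z_j\cdot\sigma\cap C_u^n$ and $Z_{j'}\cdot\sigma\cap C_v^n$ that get matched must lie in the same $W$-orbit.

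The key steps, in order, are: (1) Split according to the block size $b=|Z_1|$. If $b$ is small, say $b\le k_n^{2/3}$, then there are at least $\mathsf p\ge \mathsf t_\Gamma/b\ge k_n^{1/3}/2$ blocks, and $|\Gamma|\le (k_n-\mathsf t_\Gamma)!\,(b!)^{\mathsf p}\,\mathsf p!$, which is far smaller than $(d^n)!/|F|^{d^n}$; so (\ref{eq:size1}) gives the bound $e^{-\frac1{2q}k_n\log(k_n/C)}$ just as in Lemma \ref{caseII}. (2) If $b$ is large, say $b> k_n^{2/3}$, then there are few blocks and each block, after conjugation by $\sigma$, is a uniformly random subset of $Y_1\cdot\sigma$ of size $b$ with $b\le \mathsf t_\Gamma/2\le k_n/2$ or $b>\mathsf t_\Gamma/2$; in the former subcase apply Corollary \ref{cut1} to a single block $Z_1\cdot\sigma$ (its intersections with $C_u^n$ and $C_v^n$ must be $W$-equivalent), giving $C\exp(-cb^{\alpha})\le C\exp(-ck_n^{2\alpha/3})$; in the latter subcase $\mathsf p=2$ and we argue with the complement of a block exactly as in the last case of the proof of Lemma \ref{caseI}. (3) Finally there is a boundary issue: a block may be split between $C_u^n$ and $C_v^n$, and also between $Y_1$ and the small components; handle this by first conditioning on the sizes $|Z_j\cdot\sigma\cap C_u^n|$, which by Fact \ref{kset} concentrate around $b/q$, then applying the matching estimate on a block whose $C_u^n$-piece has size $\gtrsim b/q\gtrsim k_n^{2/3}/q$, which accounts for the exponent $\alpha/3q$ in the statement. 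Summing over the $O(\mathsf p)=O(k_n)$ choices of which block matches which produces the factor $Ck_n$.

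The main obstacle will be organizing the bookkeeping in step (3): the random permutation $\sigma$ simultaneously randomizes how each block meets $C_u^n$ versus $C_v^n$ and how it meets the giant component versus the small components, and one needs the intersection of a single block with $C_u^n$ restricted to $Y_1$ to still be large and "uniformly random enough" to feed into Corollary \ref{cut1}. The clean way is to condition on the entire size profile $(|Z_j\cdot\sigma\cap C_u^n\cap Y_1|)_j$, note that conditionally the restricted sets are independent uniform of the prescribed sizes, bound the bad event that any such size drops below, say, $b/(2q)$ by Fact \ref{kset} and a union bound over $\mathsf p\le k_n$ blocks, and on the complementary event apply Corollary \ref{cut1} to the largest-piece block. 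The arithmetic is routine once this conditioning is set up; the only subtlety is that when $\mathsf t_\Gamma>(1-\tfrac1{2q})k_n$, the largest block piece in $C_u^n\cap Y_1$ is indeed $\gtrsim k_n^{2/3}/q$, which uses $b>k_n^{2/3}$ together with the concentration from Fact \ref{kset}. Combining the three regimes and absorbing constants yields $\nu_L(\Theta_{u,v}^{A,n})\le Ck_n\exp(-ck_n^{\alpha/3q})$.
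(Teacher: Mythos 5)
Your case split by block size has a genuine gap: the counting regime you propose is far too large, and the regime it genuinely fails in is exactly where a different idea is needed. For $b\le k_n^{2/3}$ you claim $|\Gamma|\le (k_n-\mathsf{t}_\Gamma)!\,(b!)^{\mathsf{p}}\,\mathsf{p}!$ is far smaller than $(d^n)!/|F|^{d^n}$, but this is false for $q=|\partial A|\ge 2$: taking $\bar\Gamma={\rm Sym}(b)\wr{\rm Sym}(\mathsf{p})$ with $b=k_n^{2/3}$ gives $\log|\Gamma|\ge \mathsf{t}_\Gamma\log b\approx\tfrac{2}{3}k_n\log k_n$, while $\log\big((d^n)!\big)\approx\tfrac{1}{q}k_n\log k_n\le\tfrac{1}{2}k_n\log k_n$, so the right-hand side of (\ref{eq:size1}) exceeds $1$ and Lemma \ref{index} gives nothing. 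The index bound only decays when $\mathsf{t}_\Gamma\log b+\mathsf{p}\log\mathsf{p}$ stays below $\big(\tfrac{1}{q}-\tfrac{1}{2q}\big)k_n\log k_n$ up to lower-order terms, which forces $b$ to be at most about $k_n^{1/2q}$ \emph{and} bounded below by a constant multiple of $q$; this is why the paper restricts the counting case to $3q\le b\le k_n^{1/3q}$. Worse, for blocks of bounded size (say $b=2$, so $\mathsf{p}\approx\mathsf{t}_\Gamma/2$ and $\mathsf{p}!\approx\exp(\tfrac{k_n}{2}\log k_n)$, which is at least as large as $(d^n)!$ for every $q\ge2$) no counting argument can work at all; the paper treats $b\le 3q$ by a separate probabilistic argument, namely that the unions $M_u(\sigma)$, $M_v(\sigma)$ of blocks wholly contained in $C_u^n$, resp.\ $C_v^n$, must satisfy $M_u(\sigma)\sim_W M_v(\sigma)$, their sizes concentrate by Fact \ref{kset}, and then Lemma \ref{treematch} applies. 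Your proposal has no substitute for this bounded-block case, and no threshold-tuning of the counting step can supply one.

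Two smaller points. In your large-block regime the hypothetical element $h$ need not match a block with itself: $Z_1\cdot\sigma\cap C_u^n$ must be $W$-equivalent to $Z_j\cdot\sigma\cap C_v^n$ for \emph{some} $j$, so for $j\neq 1$ you need Corollary \ref{cut2} (two disjoint random sets), not only Corollary \ref{cut1}, combined with the union bound over $j$ that you mention; this is where the factor $\mathsf{p}_\Gamma\le Ck_n$ in the statement comes from. (Also the subcase $b>\mathsf{t}_\Gamma/2$ is vacuous, since a proper block system has $\mathsf{p}\ge2$ equal blocks.) Finally, your accounting of the exponent is inconsistent: block pieces of size at least $k_n^{2/3}/q$ would yield a bound $\exp(-ck_n^{2\alpha/3})$, not $\exp(-ck_n^{\alpha/3q})$; in the paper the exponent $\alpha/3q$ arises from the lower threshold $b>k_n^{1/3q}$ of the matching regime, which is dictated by where the counting argument genuinely stops working.
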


\begin{proof}

Denote by $\mathsf{p}_{\Gamma}$ the number of sets in the system
of imprimitivity. Write $b=\mathsf{t}_{\Gamma}/\mathsf{p}_{\Gamma}$,
that is, the cardinality of the block (domain of imprimitivity) $Z_{i}$.
The size of $\bar{\Gamma}$ is at most $\left(b!\right)^{\mathsf{p}_{\Gamma}}\mathsf{p}_{\Gamma}!$.
Thus by (\ref{eq:size1}) and Stirling's approximation, we have that
if $3q\le b\le k_{n}^{1/3q}$, then there exists a constant $C>0$
only depending on $q$ such that 
\[
\nu_{L}\left(\Theta_{u,v}^{A,n}\right)\le e^{-\frac{1}{6q}k_{n}\log\frac{k_{n}}{C}}.
\]

Next consider the case $k_{n}^{1/3q}<b\le\mathsf{t}_{\Gamma}/2$.
For $\sigma^{-1}\Gamma\sigma$ to contain an element $h\in\pi_{n}(O_{F}^{A}(0))$
that sends $C_{u}^{n}$ to $C_{v}^{n}$, it is necessary that for
each block $Z_{i}\cdot\sigma$, either $Z_{i}\cdot\sigma\cap C_{u}=\emptyset$,
or there exists a block $Z_{j}\cdot\sigma$ such that $Z_{i}\cdot\sigma\cap C_{u}^{n}\sim_{W}Z_{j}\cdot\sigma\cap C_{v}^{n}$.
It is allowed that $i=j$. The case of empty intersection can be viewed
as a special instance of $Z_{i}\cdot\sigma\cap C_{u}^{n}\sim_{W}Z_{i}\cdot\sigma\cap C_{v}^{n}$.
For $i\neq j$, apply Corollary \ref{cut2} to $Z_{i},Z_{j}$; and
for $i=j$, apply Corollary \ref{cut1} to $Z_{i}$. Then for $Z_{1}\cdot\sigma$,
take a union bound over $j$, we have that in this case
\begin{align*}
\nu_{L}\left(\Theta_{u,v}^{A,n}\right) & \le\sum_{1\le j\le\mathsf{p}_{\Gamma}}\mathbb{P}\left(Z_{1}\cdot\sigma\cap C_{u}^{n}\sim_{W}Z_{j}\cdot\sigma\cap C_{v}^{n}\right)\\
 & \le\mathsf{p}_{\Gamma}\cdot C\exp\left(-cb^{\alpha}\right)\le C\mathsf{p}_{\Gamma}\exp\left(-ck_{n}^{\alpha/3q}\right).
\end{align*}

Finally consider the case $b\le3q$, that is, the blocks are of bounded
size. Consider the blocks which are completely contained in $C_{u}$,
and the blocks which are completely contained in $C_{v}$. Denote
by $M_{x}(\sigma)$ the union of $Z_{i}\cdot\sigma$ such that $Z_{i}\cdot\sigma\subseteq C_{x}^{n}$,
$x\in\{u,v\}$. For $\sigma^{-1}\Gamma\sigma$ to contain an element
$h\in\pi_{n}(O_{F}^{A}(0))$ that sends $C_{u}^{n}$ to $C_{v}^{n}$,
it is necessary that $M_{u}(\sigma)\sim_{W}M_{v}(\sigma)$. Since
$\sigma$ is uniform, we have that conditioned on $\left|M_{u}(\sigma)\right|=\left|M_{v}(\sigma)\right|=r$,
the distribution of $M_{u}(\sigma)$ and $M_{v}(\sigma)$ are independent
and we are in the situation of Lemma \ref{treematch}. The probability
that a block $Z_{i}\cdot\sigma\subseteq C_{u}^{n}$ is bounded from
below by $\left(\frac{1}{q}\frac{k_{n}-q^{2}}{k_{n}-q}\right)^{b}$.
The Chernoff bound as in Fact \ref{kset} implies 
\[
\mathbb{P}\left(|M_{u}(\sigma)|\le\mathbb{E}|M_{u}(\sigma)|-\epsilon\mathsf{p}_{\Gamma}\right)\le e^{-c_{\epsilon}p_{\Gamma}},
\]
where $c$ is a constant depending on $q,\epsilon$. It follows from
Lemma \ref{sec:Tree-match} 
\[
\mathbb{P}\left(M_{u}(\sigma)\sim_{W}M_{v}(\sigma)\right)\le Ce^{-cp_{\Gamma}}+Ce^{-cp_{\Gamma}^{\alpha}},
\]
where the constants $c,C>0$ only depend on $q$ and $d$. The statement
follows from combining the three cases. 

\end{proof}

Next we combine these three cases. The right hand side of the bounds
in Case II and Case III are summable in $n$, while in the first case
the bound depends on the size $k_{n}-\mathsf{t}_{\Gamma}$. Choose
a sequence of increasing numbers $\left(\Delta_{n}\right)$ such that
$\sum_{n}e^{-c\Delta_{n}^{\alpha}}<\infty$, where $c$ is the constant
in Lemma \ref{caseI}. For instance, we can take 
\[
\Delta_{n}=\left(\frac{2}{c}\log n\right)^{1/\alpha}.
\]

In the finite symmetric group ${\rm Sym}\left(\partial B_{n}(A)\right)$,
denote by $\Xi_{n}$ the collection of subgroups 
\[
\Xi_{n}:=\bigcup\left\{ L\le{\rm Sym}(U)\times{\rm Sym}(U^{c}):\ {\rm Alt}(U)\times\{id\}\le L\right\} ,
\]
where the union is taken over all subsets $U\subseteq\partial B_{n}(A)$
such that $|U|\ge k_{n}-\Delta_{n}$. Note that the collection $\Xi_{n}$
is invariant under conjugation by ${\rm Sym}\left(\partial B_{n}(A)\right)$. 

Since $\Delta_{n}\ll k_{n}$, if $\Gamma<{\rm Sym}\left(\partial B_{n}(A)\right)$
is such that its giant transitive component $Y_{1}$ has size at least
$k_{n}-\Delta_{n}$ and moreover its projection to ${\rm Sym}(Y_{1})$
contains the alternating group ${\rm Alt}(Y_{1})$, then $\Gamma$
contains ${\rm Alt}(Y_{1})\times\{id\}$, where $id$ is the identity
element of ${\rm Sym}\left(\partial B_{n}(A)\setminus Y_{1}\right)$.
Thus the three lemmas above implies that if $\Gamma_{i}$ is not in
$\Xi_{n}$, then the contribution of $\nu_{\Gamma_{i}}\left(\Theta_{u,v}^{A,n}\right)$
to $\bar{\mu}_{n}^{A}\left(\Theta_{u,v}^{A,n}\right)$ is small. Taking
into account the tree structure, by the Borel-Cantelli lemma we obtain
the following.

\begin{proposition}\label{containRO}

Let $\mu$ be an IRS of $\mathcal{N}_{F}$ where $F$ is transitive
on $D$. Then for any finite complete subtree $A$ and $u,v\in\partial A$
two distinct leaves, there exists a subset $\tilde{\Theta}_{u,v}^{A}\subseteq\Theta_{u,v}^{A}$
with $\mu\left(\tilde{\Theta}_{u,v}^{A}\right)=\mu\left(\Theta_{u,v}^{A}\right)$
such that the following holds. For any $H\in\tilde{\Theta}_{u,v}^{A}$,
there exists a non-empty open set $U\subseteq\partial\mathcal{T}$
such that 
\[
\left[R_{O_{F}^{A}}(U),R_{O_{F}^{A}}(U)\right]\le H.
\]
. 

\end{proposition}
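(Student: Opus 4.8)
The plan is to run a Borel--Cantelli argument over the levels $n$, using the ergodic decompositions $\bar{\mu}_n^A=\sum_{i=1}^{I_n}\lambda_i\nu_{\Gamma_i}$ of the induced finitary IRSs, and then pass the resulting finite-level containment statements to the limiting subgroup $H$ via the compactness of the Chabauty topology. Fix $A,u,v$ with $\mu(\Theta_{u,v}^A)>0$; otherwise there is nothing to prove. The three lemmas (Case I, Case II, Case III) bound $\nu_{\Gamma_i}(\Theta_{u,v}^{A,n})$ whenever $\Gamma_i\notin\Xi_n$, where $\Xi_n$ is the (conjugation-invariant) collection of subgroups whose giant transitive component $Y_1$ has $|Y_1|\ge k_n-\Delta_n$ and whose projection to $\mathrm{Sym}(Y_1)$ contains $\mathrm{Alt}(Y_1)$. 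First I would combine these lemmas into a single estimate: for $\Gamma_i\notin\Xi_n$,
\[
\nu_{\Gamma_i}\left(\Theta_{u,v}^{A,n}\right)\le C\exp\left(-c\,k_n^{\alpha/3q}\right)+C\exp\left(-c\,\Delta_n^{\alpha}\right),
\]
using that in Case I with no giant component the term $k_n-\mathsf{t}_\Gamma$ is at least $\Delta_n$ when $\mathsf{t}_\Gamma<k_n-\Delta_n$, and noting $k_n=qd^n$ grows geometrically. With the choice $\Delta_n=(2\log n/c)^{1/\alpha}$ both terms on the right are summable in $n$.

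Next I would extract the almost-sure statement at each finite level. Since $\bar{\mu}_n^A(\Theta_{u,v}^{A,n})\ge\mu(\Theta_{u,v}^A)=:\epsilon_0>0$ and the total contribution from components $\Gamma_i\notin\Xi_n$ is at most $\varepsilon_n:=\sum_n$-summable, the measure $\bar\mu_n^A$ restricted to $\Theta_{u,v}^{A,n}$ is, up to mass $\varepsilon_n$, carried by groups lying in $\Xi_n$; that is,
\[
\bar{\mu}_n^A\left(\Theta_{u,v}^{A,n}\setminus\Xi_n\right)\le\varepsilon_n,\qquad \sum_n\varepsilon_n<\infty.
\]
Pulling back along the continuous map $H\mapsto\pi_n(H\cap O_F^A(n))$, the event $E_n$ that $H\in\Theta_{u,v}^A$ but $\pi_n(H\cap O_F^A(n))\notin\Xi_n$ has $\mu(E_n)\le\varepsilon_n$. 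By Borel--Cantelli, $\mu$-a.e.\ $H\in\Theta_{u,v}^A$ lies in only finitely many $E_n$, so for all large $n$, $\pi_n(H\cap O_F^A(n))$ has a giant transitive component $Y_1^{(n)}$ of size $\ge k_n-\Delta_n$ on which its projection contains $\mathrm{Alt}(Y_1^{(n)})$, hence $\pi_n(H\cap O_F^A(n))\supseteq\mathrm{Alt}(Y_1^{(n)})\times\{\mathrm{id}\}$.

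Finally I would translate "the sub-quotient contains a large alternating group at every large level" into "$H$ contains $[R_{O_F^A}(U),R_{O_F^A}(U)]$ for some nonempty open $U\subseteq\partial\mathcal{T}$." The set $Y_1^{(n)}\subseteq\partial B_n(A)$ determines a union of cones $U_n=\bigcup_{w\in Y_1^{(n)}}C_w\subseteq\partial\mathcal{T}$; since $\mathrm{Alt}(Y_1^{(n)})\times\{\mathrm{id}\}\le\pi_n(H\cap O_F^A(n))$ and $U_F^A(n)\le H$ is \emph{not} automatic, I would instead argue directly that the preimage in $O_F^A(n)$ of $\mathrm{Alt}(Y_1^{(n)})\times\{\mathrm{id}\}$, intersected with $H$, together with the constraint as $n\to\infty$, forces $H$ to contain every element of $O_F^A$ supported (in the spheromorphism sense) inside $\liminf U_n$; the commutator of the rigid stabilizer is generated by such elements. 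The delicate point is that the giant components at different levels need not be nested, so I would use a pigeonhole/diagonal argument: each $Y_1^{(n)}$ omits at most $\Delta_n=o(d^n)$ leaves, so the cones it covers stabilize on a clopen set $U\subseteq\partial\mathcal T$ of full "asymptotic density," and any fixed element $g\in[R_{O_F^A}(U),R_{O_F^A}(U)]$ is, for $n$ large enough, represented by a triple supported on $B_n(A)$ whose permutation of $\partial B_n(A)$ lies in $\mathrm{Alt}(Y_1^{(n)})$ (using that commutators land in the alternating group and that $g$ moves only finitely much of the tree), hence $g\in H$; as $H$ is closed this gives the containment. I expect this last step --- making precise the stabilization of the giant components and checking that an arbitrary double-commutator element is eventually captured by the finite-level alternating groups --- to be the main obstacle, and it is presumably where the completeness of $A$ and the precise definition of $R_{O_F^A}(U)$ (with $O_F^A$ open in $\mathcal{N}_F$) get used; one then takes $\tilde\Theta_{u,v}^A$ to be the conull subset of $\Theta_{u,v}^A$ on which the Borel--Cantelli conclusion holds.
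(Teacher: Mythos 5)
Your Borel--Cantelli half is exactly the paper's argument: the same $\Xi_n$, the same choice $\Delta_n=(\tfrac{2}{c}\log n)^{1/\alpha}$, the same summation over the ergodic components $\nu_{\Gamma_i}$ with $\Gamma_i\notin\Xi_n$, and the same conclusion that $\mu$-a.e.\ $H\in\Theta_{u,v}^{A}$ has $\pi_n\left(H\cap O_F^A(n)\right)\in\Xi_n$ for all large $n$. The gap is in the last step, and you have correctly located it but not closed it. Knowing only that each giant component $Y_1^{(n)}$ omits at most $\Delta_n$ leaves does not yield a cone whose leaves eventually lie entirely inside the giant components: your ``full asymptotic density / stabilization of $\liminf U_n$'' argument controls cardinalities level by level, but it cannot exclude that for \emph{every} vertex $x$ some descendant of $x$ is omitted from $Y_1^{(m)}$ at infinitely many levels $m$. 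In that scenario no fixed $g\in\left[R_{O_F^A}(C_x),R_{O_F^A}(C_x)\right]$ is captured, because to get $g\in H$ one needs, for all large $m$, the level-$m$ permutation of $g$ (whose support can be a positive fraction of the descendants of $x$) to lie in ${\rm Alt}(Y_1^{(m)})\times\{id\}\le\pi_m\left(H\cap O_F^A(m)\right)$, and then $g\in H$ follows since the corresponding $h_m\in H$ satisfies $h_m\in g\,U_F^A(m)$ with $U_F^A(m)$ a neighbourhood basis of the identity and $H$ closed. Note also that the target open set is just a single cone $C_x$; nothing of ``full density'' is needed.

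What actually closes the gap in the paper is a coherence (heredity) claim about the giant components, not a density count: for $H$ in the conull set and $n\ge n_0(H)$, a vertex $x\in\partial B_n(A)$ lies in $Y_H(n)$ \emph{if and only if} all of its children lie in $Y_H(n+1)$. The ``if'' direction uses that ${\rm Alt}(d)\wr_{V_H(n)}{\rm Alt}(V_H(n))$ sits inside ${\rm Alt}(Y_H(n+1))\cap\pi_{n+1}\left(O_F^A(n)\right)$, so $\bar H_n$ is transitive on the set $V_H(n)$ of vertices all of whose children are in $Y_H(n+1)$, and $|V_H(n)|\ge k_n-\Delta_{n+1}\gg\Delta_n$ forces $V_H(n)\subseteq Y_H(n)$; the ``only if'' direction compares orbit sizes of children with $k_n-\Delta_n\gg\Delta_{n+1}$. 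Once this is in place, any $x\in Y_H(n)$ has \emph{all} its level-$\ell$ descendants in $Y_H(n+\ell)$ for every $\ell$, giving ${\rm Alt}\left(C_x^{\ell}\right)\times\{id\}\le\pi_{n+\ell}\left(H\cap O_F^A(n+\ell)\right)$ for all $\ell$, and the approximation argument above yields $\left[R_{O_F^A}(C_x),R_{O_F^A}(C_x)\right]\le H$. Without this hereditary structure (or some substitute for it), your proposal does not prove the proposition.
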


\begin{proof}

Recall that we write $\bar{\mu}_{n}^{A}=\sum_{i=1}^{I_{n}}\lambda_{i}\nu_{\Gamma_{i}}$
for the ergodic decomposition of $\bar{\mu}_{n}^{A}$, where each
$\Gamma_{i}<{\rm Sym}\left(\partial B_{n}(A)\right)$. Denote by $\bar{H}_{n}$
the sub-quotient 
\[
\bar{H}_{n}=\pi_{n}\left(H\cap O_{F}^{A}(n)\right).
\]
Then we have 
\[
\mathbb{P}_{\mu}\left(\bar{H}_{n}\notin\Xi_{n}\mbox{ and }\bar{H}_{n}\in\Theta_{u,v}^{A,n}\right)=\bar{\mu}_{n}^{A}\left(\Xi_{n}^{c}\bigcap\Theta_{u,v}^{A,n}\right)=\sum_{i:\ \Gamma_{i}\notin\Xi_{n}}\lambda_{i}\nu_{\Gamma_{i}}\left(\Theta_{u,v}^{A,n}\right),
\]
where the second equality means in the ergodic decomposition, only
those $\Gamma_{i}$ that are not in $\Xi_{n}$ contribute to the probability
of the event $\Xi_{n}^{c}\bigcap\Theta_{u,v}^{A,n}$. By Lemma \ref{caseI},
\ref{caseII} and \ref{caseIII}, we have that if $\Gamma_{i}\notin\Xi_{n}$,
then there is a constant $C,c>0$ only depending on $q,d$ such that
\[
\nu_{\Gamma_{i}}\left(\Theta_{u,v}^{A,n}\right)\le C\exp\left(-c\Delta_{n}^{\alpha}\right)+Ck_{n}\exp\left(-ck_{n}^{\alpha/3q}\right).
\]
It follows that 
\[
\mathbb{P}_{\mu}\left(\bar{H}_{n}\notin\Xi_{n}\mbox{ and }\bar{H}_{n}\in\Theta_{u,v}^{A,n}\right)\le C\exp\left(-c\Delta_{n}^{\alpha}\right)+Ck_{n}\exp\left(-ck_{n}^{\alpha/6q}\right).
\]
Recall that $\Delta_{n}=\left(\frac{2}{c}\log n\right)^{1/\alpha}$
is chosen so that the sequence $\exp\left(-c\Delta_{n}^{\alpha}\right)$
is summable in $n$. Then we have 
\begin{equation}
\sum_{n=0}^{\infty}\mathbb{P}_{\mu}\left(\bar{H}_{n}\notin\Xi_{n}\mbox{ and }\bar{H}_{n}\in\Theta_{u,v}^{A,n}\right)<\infty.\label{eq:summable}
\end{equation}
Therefore, by the Borel-Cantelli lemma, we have $\mathbb{P}_{\mu}\left(\bar{H}_{n}\in\Xi_{n}^{c}\bigcap\Theta_{u,v}^{A,n}\ \mbox{i.o.}\right)=0$,
where i.o. stands for infinitely often. 

Now consider the event $\Theta_{u,v}^{A}$ as in (\ref{eq:Auv}),
\[
\Theta_{u,v}^{A}=\left\{ H\in{\rm Sub}\left(\mathcal{N}_{F}\right):\ \exists h\in H\cap O_{F}^{A}(0)\mbox{ s.t.}\ v=u\cdot\pi_{0}(h)\right\} .
\]
For $H\in\Theta_{u,v}^{A}$, it follows from the definitions that
$\bar{H}_{n}\in\Theta_{u,v}^{A,n}$ for all $n$. Therefore $\mathbb{P}_{\mu}\left(\bar{H}_{n}\in\Xi_{n}^{c}\bigcap\Theta_{u,v}^{A,n}\ \mbox{i.o.}\right)=0$
implies that the subset $\left\{ H\in\Theta_{u,v}^{A}:\ \bar{H}_{n}\notin\Xi_{n}\mbox{ i.o.}\right\} $
has $\mu$-measure $0$. Denote by $\tilde{\Theta}_{u,v}^{A}$ the
complement of this subset in $\Theta_{u,v}^{A}$, that is, 
\[
\tilde{\Theta}_{u,v}^{A}=\left\{ H\in\Theta_{u,v}^{A}:\ \exists n_{0}(H)\mbox{ s.t. }\bar{H}_{n}\in\Xi_{n}\mbox{ for all }n\ge n_{0}(H)\right\} .
\]
The reasoning above shows that (\ref{eq:summable}) implies $\mu\left(\tilde{\Theta}_{u,v}^{A}\right)=\mu\left(\Theta_{u,v}^{A}\right)$. 

Next we relate back to the tree structure. Take a subgroup $H\in\tilde{\Theta}_{u,v}^{A}$.
For $n\ge n_{0}(H)$, denote by $Y_{H}(n)$ the subset of $\partial B_{n}(A)$
associated with $\bar{H_{n}}$ as in the definition of $\Xi_{n}$,
that is, $Y_{H}(n)$ is the giant transitive component of $\bar{H}_{n}$.
Note that this subset $Y_{H}(n)$ is well-defined and one can recognize
whether a vertex $x$ is in $Y_{H}(n)$ based on the size of the orbit
$x\cdot\bar{H}_{n}$. Recall that $\Delta_{n}=\left(\frac{2}{c}\log n\right)^{1/\alpha}$
is very small compared to the size of $\partial B_{n}(A)$, the latter
being $qd^{n}$. 

\begin{claim}\label{children}

Let $H\in\tilde{\Theta}_{u,v}^{A}$. Then there is a constant $n_{0}=n_{0}(H)$
such that for all $n\ge n_{0}$, a vertex $x\in\partial B_{n}(A)$
is in $Y_{H}(n)$ if and only if all of its children are in $Y_{H}(n+1)$. 

\end{claim}

\begin{proof}[Proof of the Claim]

Let $n_{0}$ be the constant depending on $H$ such that for all $n\ge n_{0}$,
$\bar{H}_{n}\in\Xi_{n}$. 

For the \textquotedbl if\textquotedbl{} direction, denote by $V_{H}(n)$
the set which consists of vertices in $\partial B_{n}(A)$ such that
all of their children are in $Y_{H}(n+1)$. Note that the set $V_{H}(n)$
has cardinality at least $k_{n}-\Delta_{n+1}$. In the next level
$n+1$, ${\rm Alt}(d)\wr_{V_{H}(n)}{\rm Alt}(V_{H}(n))$ is a subgroup
of ${\rm Alt}\left(Y_{H}(n+1)\right)\cap\pi_{n+1}\left(O_{F}^{A}(n)\right)$.
It follows that $\bar{H}_{n}$ is transitive on $V_{H}(n)$. Since
the cardinality of $V_{H}(n)$ is at least $k_{n}-\Delta_{n+1}\gg\Delta_{n}$,
$V_{H}(n)$ has to be contained in the giant transitive component
$Y_{H}(n)$.

For the \textquotedbl only if\textquotedbl{} direction, suppose $x$
is in $Y_{H}(n)$, then its orbit under $\bar{H}_{n}$ has size $\left|Y_{H}(n)\right|$.
Then for any of its children $xi$, $i\in\{0,\ldots,d-1\}$, the orbit
of $xi$ under $\pi_{n+1}(H\cap O_{F}^{A}(n))$ is at least $|Y_{H}(n)|$.
It follows that 
\[
\left|(xi)\cdot\pi_{n+1}\left(H\cap O_{F}^{A}(n+1)\right)\right|\ge|Y_{H}(n)|\ge k_{n}-\Delta_{n}.
\]
Since $k_{n}-\Delta_{n}\gg\Delta_{n+1}$, we conclude that $xi$ must
be in the giant component $Y_{H}(n+1)$. 

\end{proof}

We return to the proof of the proposition. By the Claim above, for
$H\in\tilde{\Theta}_{u,v}^{A}$, we have that if a vertex $x$ is
in $Y_{H}(n)$, where $n\ge n_{0}(H)$, then for the subtree rooted
at $x$, all vertices of distance $\ell$ to $x$ are contained in
$Y_{H}(n+\ell)$. In particular, ${\rm Alt}\left(C_{x}^{\ell}\right)\times\left\{ id_{{\rm Sym}(\partial B_{n+\ell}(A))}\right\} <\pi_{n+\ell}\left(H\cap O_{F}^{A}(n+\ell)\right)$
for all $\ell\ge0$. Since $H$ is a closed subgroup of $\mathcal{N}_{F}$,
we conclude that $H$ contains the derived subgroup $\left[O_{F}^{A}(C_{x}),O_{F}^{A}(C_{x})\right]$. 

\end{proof}

At this moment the proof for Theorem \ref{main} when $F$ is transitive
on $D$ is completed. In the next section we explain the additional
arguments needed to cover the general case of $F$. In particular,
we will make use of Lemma \ref{E2}. 

\section{Proof of Proposition \ref{contain} for general $F$\label{sec:colored}}

We continue to use the notations in Section \ref{sec:Preliminaries}
and \ref{sec:Induced}. Denote by $\left\{ D^{(0)},\ldots,D^{(l)}\right\} $
the $F$-orbits in $D=\{0,1,\ldots,d\}$ and $\ell_{F}$ the labeling
on the vertices of $\mathcal{T}\setminus\{v_{0}\}$ induced by $F$.
Let $A$ be a given complete finite subtree of $\mathcal{T}.$ The
quotient $S_{F}^{A}(n)=O_{F}^{A}(n)/U_{F}^{A}(n)$, which acts on
the set $\partial B_{n}(A)$ preserving the label $\ell_{F}$, is
\[
S_{F}^{A}(n)=\prod_{i=0}^{l}{\rm Sym}\left(D_{n,A}^{(i)}\right),
\]
where $D_{n,A}^{(i)}=\{v\in\partial B_{n}(A):\ \ell_{F}(v)=D^{(i)}\}$.
For each $i\in\{0,\ldots,l\}$, denote by $\vartheta_{i}$ the natural
projection
\[
\vartheta_{i}:\prod_{i=0}^{l}{\rm Sym}\left(D_{n,A}^{(i)}\right)\to{\rm Sym}\left(D_{n,A}^{(i)}\right).
\]
As before, $C_{u}^{n}$ consists of vertices in the subtree rooted
at $u$ of distance $n$ to $u$. The size of $C_{u}^{n}\cap D_{n,A}^{(i)}$
can be calculated as follows. Denote by $M_{F}$ the $(l+1)\times(l+1)$
matrix whose $k$-th row has constant entries $\left(|D^{(k-1)}|,\ldots,|D^{(k-1)}|\right)$.
Then 
\begin{equation}
\left(\begin{array}{c}
\left|C_{u}^{n}\cap D_{n,A}^{(0)}\right|\\
\vdots\\
\left|C_{u}^{n}\cap D_{n,A}^{(l)}\right|
\end{array}\right)=(M_{F}-I)^{n-1}\left(\begin{array}{c}
|D^{(0)}|-\delta_{0}(\ell_{F}(u))\\
\vdots\\
|D^{(l)}|-\delta_{l}(\ell_{F}(u))
\end{array}\right),\label{eq:sizei}
\end{equation}
where $I$ is the $(l+1)\times(l+1)$ identity matrix, $\delta_{j}(\ell_{F}(u))=1$
if $\ell_{F}(u)=D^{(j)}$ and is $0$ otherwise. In particular, asymptotically
we have 
\[
\left|C_{u}^{n}\cap D_{n,A}^{(i)}\right|\sim\frac{1}{l+1}d^{n}.
\]

To proceed, we need a labeled version of Lemma \ref{treematch}, which
follows from the same kind of proof. Recall that the notation $E_{1}\sim_{O_{F}^{A}(0)}E_{2}$
means there exists an element $g\in O_{F}^{A}(0)$ such that $E_{2}=E_{1}\cdot g$. 

\begin{lemma}\label{colormatch}

Let $u,v$ be two distinct vertices in $\partial A$ such that $\ell_{F}(u)=\ell_{F}(v)$
and $i\in\{0,...,l\}.$ Choose a set $E_{1}$ of size $k$ uniformly
random from $C_{u}^{n}\cap D_{n,A}^{(i)}$ and independently choose
a set $E_{2}$ of size $k$ uniformly random from $C_{v}^{n}\cap D_{n,A}^{(i)}$.
Then for any $\delta>0$, there exists constants $c,C>0$ only depending
on $\delta,d$, such that for all $2\le k\le|C_{u}^{n}\cap D_{n,A}^{(i)}|/2$,
\[
\mathbb{P}\left(E_{1}\sim_{O_{F}^{A}(0)}E_{2}\right)\le C\exp\left(-ck^{\frac{d-1}{2d}-\delta}\right).
\]

\end{lemma}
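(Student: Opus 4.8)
The plan is to imitate the recursive argument proving Lemma~\ref{treematch}, carrying the label $\ell_F$ along the recursion. First I would collect the structural input. If $y$ is a vertex with $\ell_F(y)=D^{(j)}$, then its $d$ children carry the labels of $D\setminus\{c_y\}$ with multiplicities, where $c_y$ is the colour of the edge joining $y$ to its parent; in particular the number of children of $y$ labelled $D^{(k)}$ equals $|D^{(k)}|-\delta_{kj}$. An element of $O_F^A(0)$ carrying $\mathcal{T}_u$ onto $\mathcal{T}_v$ (with $u\neq v$ in $\partial A$, $\ell_F(u)=\ell_F(v)$) is in particular a rooted isomorphism preserving $\ell_F$; hence at every vertex it induces a \emph{label-preserving} bijection between children, and there are at most $d!$ such bijections. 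Finally, from (\ref{eq:sizei}), writing $M_F-I=\mathbf{d}\mathbf{1}^{\!\top}-I$ with $\mathbf{d}=(|D^{(0)}|,\ldots,|D^{(l)}|)^{\!\top}$, the eigenvalues of $M_F-I$ are $\mathbf{1}^{\!\top}\mathbf{d}-1=d$ and $-1$ (with multiplicity $l$); therefore $|C_u^n\cap D_{n,A}^{(i)}|=c_i\,d^{\,n}+O(1)$ with $c_i>0$ depending only on $|D^{(i)}|$, uniformly in $u$, and consequently all ratios $|C_{yj}^{\,m-1}\cap D_{n,A}^{(i)}|/|C_y^{\,m}\cap D_{n,A}^{(i)}|$ tend to $1/d$ as $m\to\infty$, uniformly over $i$, over the child index $j$, and over $\ell_F(y)$.

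With this in hand the recursion runs as in Lemma~\ref{treematch}. Fix the target label $D^{(i)}$. For distinct vertices $y,z$ at a common level with $\ell_F(y)=\ell_F(z)$, and for $r\le\tfrac12|C_y^{\,m}\cap D_{n,A}^{(i)}|$ (here $m$ is the depth of the relevant subtree), let $a(m,r)$ be the supremum, over all such configurations and over all sizes in $[r,\tfrac12|C_y^{\,m}\cap D_{n,A}^{(i)}|]$, of the probability that independent uniform subsets of that size of $C_y^{\,m}\cap D_{n,A}^{(i)}$ and $C_z^{\,m}\cap D_{n,A}^{(i)}$ lie in one $O_F^A(0)$-orbit. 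For such a pair of sets to be $O_F^A(0)$-equivalent it is necessary that some label-preserving child-matching $\gamma$ makes the intersections with matched child-subtrees $O_F^A(0)$-equivalent (and $yj\neq z\gamma(j)$ since $C_y\cap C_z=\emptyset$). Conditioning on the sizes of those intersections, which by Fact~\ref{kset} (applied with success probability $p=\tfrac1d(1+o(1))$, legitimate once $m$ is large by the previous paragraph) concentrate around $r/d$, and bounding the conditional probabilities of matching sizes by the local limit estimate (\ref{eq:sup}) applied $d-1$ times, I obtain a recursion of the form
\[
\tilde a(m,r)\;\le\;d\,e^{-\mathfrak{h}(\epsilon)r}\;+\;d!\,C_{d,l,\epsilon}\,r^{-\frac{d-1}{2}}\,\tilde a\bigl(m-1,\tfrac rd-\epsilon r\bigr)^{d},\qquad \tilde a(m,r):=\sup_{s\ge r}a(m,s),
\]
with $\mathfrak{h}(\epsilon)>0$. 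Up to the harmless factor $d!\,C_{d,l,\epsilon}$ this is the very inequality iterated in the proof of Lemma~\ref{treematch}; iterating it for the same number $s$ of steps and then optimizing $\epsilon$ against a given $\delta$ yields $a(m,r)\le C\exp(-c\,r^{\frac{d-1}{2d}-\delta})$, and taking $y,z$ to be $u,v$ (i.e.\ $m=n$) gives the claim.

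The only genuinely new points, both minor, are: (i) the recursion must be organized around the single quantity $\tilde a(m,r)$ even though the children of a vertex carry several labels --- handled by taking the supremum over labels in the definition of $a(m,r)$, after which the $d$-th power still makes sense since each factor is bounded by $\tilde a(m-1,\cdot)$; and (ii) the size ratios are only \emph{asymptotically} $1/d$, so one must use (\ref{eq:sizei}) to check that they stay bounded away from $0$ and $1$ throughout the range of levels the iteration visits (the bottom-most $O_{d,l}(1)$ levels, where $|C_y^{\,m}\cap D_{n,A}^{(i)}|$ and hence $r$ are bounded, make the statement trivially true after enlarging $C$, and there one also disposes of the degenerate cases in which some child has no descendant of label $D^{(i)}$). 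The probabilistic core is identical to that of Lemma~\ref{treematch} and needs no new ingredient; the main --- purely bookkeeping --- obstacle is juggling the several label classes and verifying the uniform size asymptotics.
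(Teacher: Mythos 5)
Your proposal is correct and follows essentially the same route as the paper's proof: a recursion down the subtrees in which one tracks, at each level, the probability that uniform random subsets of the label-$i$ descendants of two same-labelled vertices are $O_F^A(0)$-equivalent, uses the matrix identity (\ref{eq:sizei}) (spectral gap of $M_F-I$) to show the child-to-parent size ratios are uniformly close to $1/d$, and then reruns the one-step inequality and iteration from Lemma \ref{treematch} verbatim. The only cosmetic differences are your explicit handling of the bottom-most levels and the supremum over labels in the definition of the recursive quantity, which the paper treats implicitly.
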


\begin{proof}

Consider recursively down the subtrees rooted at $u,v$. For convenience
of notation, we write $O_{0}=O_{F}^{A}(0)$ through the proof. In
order to have an element in $O_{0}$ which maps $E_{1}$ to $E_{2}$,
it is necessary that there exists a label preserving permutation $\gamma$
such that for each child $uj$ of $u$, $E_{1}\cap C_{uj}^{n-1}\sim_{O_{0}}E_{2}\cap C_{v\gamma(j)}^{n-1}$. 

Take a vertex $y$ in the subtree rooted at $u$ of distance $\ell$
to $u$ and a vertex $z$ in the subtree rooted at $v$ of distance
$\ell$ to $v$ such that $\ell_{F}(y)=\ell_{F}(z)$. Choose a set
$A_{1}$ of size $r$ uniformly from $C_{y}^{n-\ell}\cap D_{n,A}^{(i)}$,
and independently a set $A_{2}$ of size $r$ uniformly from $C_{z}^{n-\ell}\cap D_{n,A}^{(i)}$.
The probability that $A_{1}\sim_{O_{0}}A_{2}$ depends only on the
level $\ell$, the size $r$ and the label $\ell_{F}(y)$. Denote
by $a(\ell,r)$ the maximum of the probability $\mathbb{P}(A_{1}\sim_{O_{0}}A_{2})$
over all $y\in C_{u}^{\ell}$, $z\in C_{v}^{\ell}$. Write for $r\le\frac{1}{2}\min_{z\in C_{u}^{\ell}}\left|C_{z}^{n-\ell}\cap D_{n,A}^{(i)}\right|$,
\[
\tilde{a}(\ell,r)=\sup\left\{ a(\ell,s):\ r\le s\le\frac{1}{2}\max_{z\in C_{u}^{\ell}}\left|C_{z}^{n-\ell}\cap D_{n,A}^{(i)}\right|\right\} .
\]
Since the size of $C_{y}^{n-\ell}\cap D_{n,A}^{(i)}$ satisfies the
equation (\ref{eq:size1}), we have that there is a constant $\lambda_{0}\in(0,1)$
depending on the matrix $M_{F}-I$, such that for any children $yj$
of $y$,
\[
\left|\frac{\left|C_{yj}^{n-\ell-1}\cap D_{n,A}^{(i)}\right|}{\left|C_{y}^{n-\ell}\cap D_{n,A}^{(i)}\right|}-\frac{1}{d}\right|\le\lambda_{0}^{n-\ell-1}.
\]
Take a small constant $\epsilon>0$. Then for level $\ell$ such that
$\lambda_{0}^{n-\ell-1}<\epsilon/2$, the same one step recursion
to children of vertices in level $\ell$ as in the proof of Lemma
\ref{treematch} implies 

\[
\tilde{a}\left(\ell,r\right)\le d\exp\left(-c_{d,\epsilon}r\right)+C_{d,\epsilon}r^{-\frac{d-1}{2}}\tilde{a}\left(\ell+1,\frac{r}{d}-\epsilon r\right)^{d},
\]
 where the constants $c_{d,\epsilon}$ and $C_{d,\epsilon}$ depend
only on $d$ and $\epsilon$. Start with $r$ where $d^{n-\ell}>2r$
and iterate for $s$ steps, where $s$ is such that 
\begin{equation}
C_{d,\epsilon}^{d}r^{-\frac{d-1}{2}}\le\left(\frac{1}{4}\left(\frac{1}{d}-\epsilon\right)^{s}\right)^{d},\label{eq:s-1}
\end{equation}
then summing up the terms we have 
\[
a\left(\ell,r\right)\le C_{1}\exp\left(-c(\epsilon)r\left(1-d\epsilon\right)^{s-1}\right)+2^{-d^{s-1}},
\]
where $C_{1}$ is a constant depending only on $d,\epsilon$. Given
a $\delta>0$, choose $\epsilon$ sufficiently small and $s$ the
largest integer satisfying (\ref{eq:s-1}), we conclude that there
are constants $C,c$ only depending on $\delta,d$ such that 
\begin{equation}
\tilde{a}(\ell,r)\le C\exp\left(-cr^{\frac{d-1}{2d}-\delta}\right).\label{eq:alr-1}
\end{equation}

\end{proof}

Let $\Gamma$ be a subgroup of $S_{F}^{A}(n)$ and $\nu_{\Gamma}$
be the IRS which is the uniform measure on $S_{F}^{A}(n)$-conjugates
of $\Gamma$. With Lemma \ref{colormatch} one can repeat the argument
in Section \ref{sec:transitive} to each component $\vartheta_{i}\left(\Gamma\right)$.
Then we use the conjugacy class size Lemma \ref{E2} to show that
if $\Gamma$ does not contain the product of the large alternating
subgroups then $\nu_{\Gamma}\left(\Theta_{u,v}^{A,n}\right)$ is small.
More precisely, let $\left(\Delta_{n}\right)$ be an increasing sequence
of positive numbers, $\Delta_{n}\ll n$. Denote by $\Pi_{n}^{A}$
be the collection of subgroups of $S_{F}^{A}(n)$ where $L\in\Pi_{n}^{A}$
if and only if there exists a subset $U_{i}\subseteq D_{n,A}^{(i)}$
for each $i\in\{0,\ldots,l\}$ with $\left|U_{i}\right|\ge|D_{n,A}^{(i)}|-\Delta_{n},$
such that $L\le\prod_{i=0}^{l}\left({\rm {\rm Sym}}(U_{i})\times{\rm Sym}\left(D_{n,A}^{(i)}\setminus U_{i}\right)\right)$
and $L\ge\prod_{i=0}^{l}\left({\rm {\rm Alt}}(U_{i})\times\{id\}\right)$. 

As in the previous section, write 

\[
q=|\partial A|,k_{n}=qd^{n}=\left|\partial B_{n}(A)\right|\mbox{ and }\alpha=\frac{d-1}{4d}.
\]
For $\Gamma\notin\Pi_{n}^{A}$, we have the following bound, where
the event $\Theta_{u,v}^{A,n}$ is defined in (\ref{eq:Auvn}). 

\begin{lemma}\label{colorbound}

Suppose that $\Gamma\le S_{F}^{A}(n)$ is such that $\Gamma\notin\Pi_{n}^{A}$.
Then there are constants only depending on $q,d$ such that 
\begin{equation}
\nu_{\Gamma}\left(\Theta_{u,v}^{A,n}\right)\le C\exp\left(-c\Delta_{n}^{\alpha}\right)+Ck_{n}\exp\left(-ck_{n}^{\alpha/6q}\right).\label{eq:tailbound2}
\end{equation}

\end{lemma}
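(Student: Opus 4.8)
The plan is to reduce Lemma~\ref{colorbound} to the transitive-case machinery of Section~\ref{sec:transitive} applied component-by-component, and then to glue the components together using the pairing structure captured by Lemma~\ref{E2}. Recall that $S_{F}^{A}(n)=\prod_{i=0}^{l}{\rm Sym}\left(D_{n,A}^{(i)}\right)$, so a subgroup $\Gamma<S_{F}^{A}(n)$ has $l+1$ projections $\vartheta_{i}(\Gamma)<{\rm Sym}\left(D_{n,A}^{(i)}\right)$, each of which is a subgroup of a genuine symmetric group on roughly $d^{n}/(l+1)$ points. The hypothesis $\Gamma\notin\Pi_{n}^{A}$ means that for at least one index $i_{0}$, the projection $\vartheta_{i_{0}}(\Gamma)$ fails the condition defining $\Pi_{n}^{A}$ in the $i_{0}$-th coordinate: either its giant transitive component on $D_{n,A}^{(i_{0})}$ has size below $|D_{n,A}^{(i_{0})}|-\Delta_{n}$, or it does not contain the alternating group on that giant component. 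In either situation, exactly the trichotomy of Lemmas~\ref{caseI}, \ref{caseII}, \ref{caseIII} applies to $\vartheta_{i_{0}}(\Gamma)$ as a subgroup of ${\rm Sym}\left(D_{n,A}^{(i_{0})}\right)$, with Lemma~\ref{colormatch} playing the role that Lemma~\ref{treematch} played before.

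\medskip

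\emph{First I would isolate the obstructing coordinate.} Since $\Gamma\notin\Pi_{n}^{A}$, fix an index $i_{0}$ for which the $i_{0}$-th coordinate condition fails. Any element $h\in\pi_{n}(O_{F}^{A}(0))$ with $C_{v}^{n}=C_{u}^{n}\cdot h$ is label-preserving, so it restricts to an element of ${\rm Sym}\left(D_{n,A}^{(i_{0})}\right)$ sending $C_{u}^{n}\cap D_{n,A}^{(i_{0})}$ to $C_{v}^{n}\cap D_{n,A}^{(i_{0})}$; consequently $\vartheta_{i_{0}}$ maps such an $h$ to an element witnessing the analogue of the event $\Theta_{u,v}^{A,n}$ for the single symmetric group ${\rm Sym}\left(D_{n,A}^{(i_{0})}\right)$. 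Running a uniform conjugation $\boldsymbol{\sigma}\in S_{F}^{A}(n)$ induces a uniform conjugation of $\vartheta_{i_{0}}(\Gamma)$ inside ${\rm Sym}\left(D_{n,A}^{(i_{0})}\right)$, so $\nu_{\Gamma}\left(\Theta_{u,v}^{A,n}\right)$ is bounded above by the probability, over a uniform conjugate of $\vartheta_{i_{0}}(\Gamma)$, of containing an $O_{F}^{A}(0)$-admissible map from $C_{u}^{n}\cap D_{n,A}^{(i_{0})}$ to $C_{v}^{n}\cap D_{n,A}^{(i_{0})}$. Now I repeat the Case~I/II/III analysis verbatim: Lemma~\ref{index} (via $|\vartheta_{i_{0}}(\Gamma)|\le$ product of factorials of transitive-component sizes) handles intransitivity without a giant component and primitivity-without-${\rm Alt}$; Corollaries~\ref{cut1} and \ref{cut2}, which only use the rooted-tree automorphism group $W$ and hence apply to the label-restricted orbit relation $\sim_{O_{F}^{A}(0)}$ once one invokes Lemma~\ref{colormatch} in place of Lemma~\ref{treematch}, handle the large-intransitive-component and imprimitive cases. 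The upshot is a bound of the form $C\exp\left(-c\Delta_{n}^{\alpha}\right)+Ck_{n}\exp\left(-ck_{n}^{\alpha/6q}\right)$ exactly as in Proposition~\ref{containRO}, with $\Delta_{n}$ measuring the deficiency of the giant component and the $k_{n}^{\alpha/6q}$ term absorbing the primitive and imprimitive sub-cases.

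\medskip

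\emph{The one genuinely new point} is the case where each individual projection $\vartheta_{i}(\Gamma)$ \emph{does} contain the alternating group on a giant component of size $\ge|D_{n,A}^{(i)}|-\Delta_{n}$, yet $\Gamma$ itself does not contain the product $\prod_{i}\left({\rm Alt}(U_{i})\times\{id\}\right)$ — that is, $\Gamma$ surjects onto each factor's large alternating group but the graph of its embedding in the product introduces nontrivial pairings between coordinates. This is precisely where Lemma~\ref{E2} enters: writing $S_{F}^{A}(n)$ as a product of two groups by splitting off the $i_{0}$-th factor (or isolating the relevant pair of factors), and letting $B$ be a set of generators of $\prod_{i}\left({\rm Alt}(U_{i})\times\{id\}\right)$ that one wishes to show $H$ contains, Lemma~\ref{E2} bounds $\mathbb{P}_{\mu}(H\supseteq B)$ by an expectation of $1/|{\rm Cl}_{N_{1}}(\pi_{1}(B)H_{1})|$. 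The key algebraic fact to invoke is that a large alternating group $A_{m}$ has no index-$<m$ subgroups other than point stabilizers and no outer automorphisms for $m\ge 7$, so any nontrivial pairing between a giant-$A_{m}$ factor and another coordinate forces the conjugacy class of the relevant coset to be large (superexponential in $m$, hence in $k_{n}$); this makes the complementary event — $\Gamma$ has the right projections but the wrong pairing — contribute a summable-in-$n$ probability, and it falls under the same $Ck_{n}\exp(-ck_{n}^{\alpha/6q})$ umbrella after adjusting constants.

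\medskip

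\emph{The main obstacle I anticipate} is exactly this pairing analysis: one must be careful that "contains ${\rm Alt}(U_{i})$ in every coordinate" together with "does not contain $\prod_{i}{\rm Alt}(U_{i})\times\{id\}$" really does force a pairing between two \emph{distinct} coordinates' giant alternating quotients (as opposed to, say, a pairing with a small residual symmetric factor on $D_{n,A}^{(i)}\setminus U_{i}$, which is harmless and absorbable into $\Pi_{n}^{A}$ by slightly shrinking the $\Delta_{n}$ budget), and then that Lemma~\ref{E2} is applied with the correct splitting $L_{1}\times L_{2}$ so that $N_{1}$, the normalizer of $H_{1}$ inside the projection, is small enough that ${\rm Cl}_{N_{1}}$ is forced large. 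Once the bookkeeping of which coordinate plays the role of $L_{1}$ is settled and the relevant finite-group facts about alternating groups (index of proper subgroups, rigidity of the isomorphism) are quoted, everything else is a transcription of Section~\ref{sec:transitive} with $D_{n,A}^{(i_{0})}$ in place of $\partial B_{n}(A)$.
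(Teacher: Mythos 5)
Your first case --- isolating a coordinate $i_{0}$ with $\vartheta_{i_{0}}(\Gamma)\notin\vartheta_{i_{0}}\left(\Pi_{n}^{A}\right)$ and re-running the Case I/II/III analysis of Section \ref{sec:transitive} inside ${\rm Sym}\left(D_{n,A}^{(i_{0})}\right)$ with Lemma \ref{colormatch} in place of Lemma \ref{treematch} --- is exactly what the paper does, and that part of your outline is sound.

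The gap is in the remaining case, which you correctly single out as the genuinely new point but then attack with the wrong event. Lemma \ref{colorbound} asks for a bound on $\nu_{\Gamma}\left(\Theta_{u,v}^{A,n}\right)$, i.e.\ on the probability that a uniform conjugate of the \emph{fixed} group $\Gamma$ contains some transporter $g\in\pi_{n}\left(O_{F}^{A}(0)\right)$ with $C_{v}^{n}=C_{u}^{n}\cdot g$. You propose to apply Lemma \ref{E2} with $B$ a generating set of $\prod_{i}\left({\rm Alt}(U_{i})\times\{id\}\right)$ and to bound $\mathbb{P}_{\mu}(H\supseteq B)$; but whether a conjugate of $\Gamma$ contains these alternating slices is a conjugation-invariant property which, in this case, fails for every conjugate by hypothesis, so bounding that probability gives no information about the transporter event, and your appeal to rigidity of $A_{m}$ (no outer automorphisms, no small-index subgroups) never connects to $\Theta_{u,v}^{A,n}$. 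The paper's argument at this point is the step you are missing: if $\vartheta_{j}(\Gamma)\in\vartheta_{j}\left(\Pi_{n}^{A}\right)$ for all $j$ but $\Gamma\notin\Pi_{n}^{A}$, there is an index $i$ for which the slice $\Gamma_{i}=\{\gamma\in\Gamma:\ \vartheta_{j}(\gamma)=id\mbox{ for all }j\neq i\}$ does not contain ${\rm Alt}(U_{i})$; since its image is normal in $\vartheta_{i}(\Gamma)\ge{\rm Alt}(U_{i})\times\{id\}$ and ${\rm Alt}(U_{i})$ is simple, this forces $\Gamma_{i}\le\{id\}\times{\rm Sym}\left(D_{n,A}^{(i)}\setminus U_{i}\right)$. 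One then applies Lemma \ref{E2} with $L_{1}={\rm Sym}\left(D_{n,A}^{(i)}\right)$, $L_{2}=\prod_{j\neq i}{\rm Sym}\left(D_{n,A}^{(j)}\right)$ and $B=\{g\}$ a \emph{single transporter}, and the quantitative input is that the coset $\vartheta_{i}(g)H_{i}$ restricted to $\left(U_{i}\cdot\sigma\right)\cap C_{v}^{n}$ is one partial map, so conjugation by the rigid stabilizer of this set sends it to pairwise distinct cosets; hence $\left|{\rm Cl}_{N_{1}}\left(\vartheta_{i}(g)H_{i}\right)\right|\ge\left(|C_{v}^{n}\cap D_{n,A}^{(i)}|-\Delta_{n}\right)!$, which beats the union bound over the at most $|\partial A|!\,|F|^{d^{n}}$ transporters and yields (\ref{eq:caseII'}). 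Without this (or an equivalent) estimate your second case is unproved; note also that the paper never needs to decide whether the bad pairing is with another giant alternating factor or with a small residual factor, so the obstacle you anticipate dissolves once the correct application of Lemma \ref{E2} is in place.
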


\begin{proof}

Denote by $\vartheta_{i}\left(\Pi_{n}^{A}\right)$ the collection
of subgroups of ${\rm Sym}\left(D_{n,A}^{(i)}\right)$ that can be
obtained as projection of some $L\in\Pi_{n}^{A}$. This collection
is similar to $\Xi_{n}$ considered in the previous section. 

First consider the projection $\vartheta_{i}(\Gamma)$ in the symmetric
group ${\rm Sym}\left(D_{n,A}^{(i)}\right)$, where $i\in\{0,...,l\}.$
In order for $\sigma^{-1}\Gamma\sigma$ to be in $\Theta_{u,v}^{A,n}$,
it is necessary that $\vartheta_{i}\left(\sigma^{-1}\Gamma\sigma\right)$
contains an element $g$ such that $\left(C_{u}^{n}\cap D_{n,A}^{(i)}\right)\cdot g=C_{v}^{n}\cap D_{n,A}^{(i)}$
and there exists $h\in\pi_{n}\left(O_{F}^{A}(0)\right)$ such that
the restriction of $g$ to $C_{u}^{n}\cap D_{n,A}^{(i)}$ coincides
with the restriction of $h$. Apply the same arguments in Section
\ref{sec:transitive} to a random conjugate of $\vartheta_{i}(\Gamma)$
in ${\rm Sym}\left(D_{n,A}^{(i)}\right)$, with Lemma \ref{treematch}
replaced by Lemma \ref{colormatch}, we obtain that if $\Gamma$ is
such that there exists $i\in\{0,\ldots,l\}$ with $\vartheta_{i}(\Gamma)\notin\vartheta_{i}\left(\Pi_{n}^{A}\right)$,
then
\begin{equation}
\nu_{\Gamma}\left(\Theta_{u,v}^{A,n}\right)\le C\exp\left(-c\Delta_{n}^{\alpha}\right)+Ck_{n}\exp\left(-ck_{n}^{\alpha/6q}\right),\label{eq:caseI'}
\end{equation}
where the constants $C,c>0$ depends only on $q,d$. 

Denote by $\Gamma_{i}$ the following normal subgroup of $\Gamma$:
\[
\Gamma_{i}=\{\gamma\in\Gamma:\ \vartheta_{j}(\gamma)=id\mbox{ for all }j\neq i\}.
\]
Next we consider the case where $\vartheta_{j}(\Gamma)\in\vartheta_{j}\left(\Pi_{n}^{A}\right)$
for every $j\in\{0,\ldots,l\}$, but $\Gamma\notin\Pi_{n}^{A}$. Then
there must exist an index $i\in\left\{ 0,\ldots,l\right\} $ and subset
$U_{i}\subseteq D_{n,A}^{(i)}$ such that $\Gamma_{i}$ does not contain
${\rm Alt}(U_{i})$, but 
\begin{equation}
{\rm Alt}(U_{i})\times\{id\}\le\vartheta_{i}(\Gamma)\le{\rm Sym}(U_{i})\times{\rm Sym}\left(D_{n,A}^{(i)}\setminus U_{i}\right).\label{eq:i1}
\end{equation}
Since $\Gamma_{i}$ is normal in $\vartheta_{i}(\Gamma)$, the assumption
that $\Gamma_{i}$ does not contain ${\rm Alt}(U_{i})$ implies that
\begin{equation}
\Gamma_{i}\le\left\{ id_{{\rm Sym}(U_{i})}\right\} \times{\rm Sym}\left(D_{n,A}^{(i)}\setminus U_{i}\right).\label{eq:i2}
\end{equation}
Regard $S_{F}^{A}(n)$ as the product of $L_{1}={\rm Sym}(D_{n,A}^{(i)})$
and $L_{2}=\prod_{j:j\neq i}{\rm Sym}(D_{n,A}^{(j)})$. Now take an
element $g\in O_{F}^{A}(0)$ and apply Lemma \ref{E2} to the IRS
$\nu_{\Gamma}$, then we have 
\begin{equation}
\mathbb{P}_{\nu_{L}}\left(H\ni g\right)\le\mathbb{E}_{\nu_{L}}\left[\frac{1}{\left|{\rm Cl}_{N_{1}}\left(\vartheta_{i}(g)H_{i}\right)\right|}\right],\label{eq:c1}
\end{equation}
where $N_{1}$ is the normalizer of $H_{i}$ in $L_{1}={\rm Sym}(D_{n,A}^{(i)})$,
$H_{i}=\{h\in H:\vartheta_{j}(h)=id\mbox{ for all }j\neq i\}$. 

We now estimate the conjugacy class size which appears in (\ref{eq:c1})
for $g\in\pi_{n}\left(O_{F}^{A}(0)\right)$ with $C_{v}^{n}=C_{u}^{n}\cdot g$.
For $\sigma\in L_{1}$, we have $\left(\sigma^{-1}\Gamma\sigma\right)_{i}\le\left\{ id_{{\rm Sym}(U_{i}\cdot\sigma)}\right\} \times{\rm Sym}\left(D_{n,A}^{(i)}\setminus U_{i}\cdot\sigma\right)$
and the associated normalizer $N_{1}\ge{\rm Sym}\left(U_{i}\cdot\sigma\right)\times\{id\}$.
Note that for any $\sigma\in L_{1}$, 
\[
|\left(U_{i}\cdot\sigma\right)\cap C_{v}^{n}|\ge|C_{v}^{n}\cap D_{n,A}^{(i)}|-\Delta_{n}.
\]
We claim that if $H=\sigma^{-1}\Gamma\sigma$ contains the element
$g$, then the map from the rigid stabilizer of $\left(U_{i}\cdot\sigma\right)\cap C_{v}^{n}$
in $L_{1}$ to conjugacy classes of $\vartheta_{i}(g)H_{i}$, given
by conjugation
\[
\gamma\mapsto\gamma^{-1}\vartheta_{i}(g)\gamma H_{i},
\]
is injective. Indeed the set of partial homeomorphisms $\left\{ h|_{\left(U_{i}\cdot\sigma\right)\cap C_{v}^{n}}:h\in\vartheta_{i}(g)H_{i}\right\} $
consists of a unique element, which is $g|_{\left(U_{i}\cdot\sigma\right)\cap C_{u}^{n}}$.
After the conjugation by $\gamma$ in the rigid stabilizer of $\left(U_{i}\cdot\sigma\right)\cap C_{v}^{n}$,
the set $\left\{ h|_{\left(U_{i}\cdot\sigma\right)\cap C_{v}}:h\in\gamma^{-1}\vartheta_{i}(g)\gamma H_{i}\right\} $
consists of a unique element $g|_{\left(U_{i}\cdot\sigma\right)\cap C_{u}^{n}}\gamma$.
The claim on injectivity follows. We conclude from (\ref{eq:c1})
that 
\[
\mathbb{P}_{\nu_{L}}\left(H\ni g\right)\le\frac{1}{\left(|C_{v}^{n}\cap D_{n,A}^{(i)}|-\Delta_{n}\right)!}.
\]
Take a union bound over $g\in\pi_{n}\left(O_{F}^{A}(0)\right)$ with
$C_{v}^{n}=C_{u}^{n}\cdot g$, we have that 
\begin{equation}
\nu_{L}\left(\Theta_{u,v}^{A,n}\right)\le\frac{\left|\partial A\right|!|F|^{d^{n}}}{\left(|C_{v}^{n}\cap D_{n,A}^{(i)}|-\Delta_{n}\right)!}\le C\exp\left(-cd^{n}\log\frac{d}{C}\right).\label{eq:caseII'}
\end{equation}
The statement follows from combining the two cases (\ref{eq:caseI'})
and (\ref{eq:caseII'}). 

\end{proof} 

Now we conclude the proof of Proposition \ref{contain} stated in
the Introduction. 

\begin{proof}[Proof of Proposition \ref{contain}]

Let $A_{0}$ be a given finite complete tree and $\mu=\mu_{A_{0}}$
an IRS of $O_{F}^{A_{0}}.$ 

We first prove that for every finite complete tree $A$ with $A\supseteq A_{0}$,
and two distinct vertices $u,v\in\partial A$ with $\ell_{F}(u)=\ell_{F}(v)$,
there is a subset $\tilde{\Theta}_{u,v}^{A}$ of $\Theta_{u,v}^{A}$
such that $\mu\left(\Theta_{u,v}^{A}\setminus\tilde{\Theta}_{u,v}^{A}\right)=0$
and for every $H\in\tilde{\Theta}_{u,v}^{A}$, there exists a vertex
in $\mathcal{T}\setminus A$ satisfying $R_{O_{F}^{A}}(C_{x})'<H$. 

Take the sequence $(\Delta_{n})$ to be $\Delta_{n}=\left(\frac{2}{c}\log n\right)^{1/\alpha}$
such that the RHS of (\ref{eq:tailbound2}) is summable in $n$. Then
the same reasoning as in the proof of Proposition \ref{containRO}
shows that Lemma \ref{colorbound} and the Borel-Cantelli Lemma imply
\begin{equation}
\mathbb{P}_{\mu}\left(\pi_{n}\left(H\cap O_{F}^{A}(n)\right)\in\Theta_{u,v}^{A,n}\bigcap\left(\Pi_{n}^{A}\right)^{c}\mbox{ i.o.}\right)=0.\label{eq:i.o.}
\end{equation}
Denote by $\tilde{\Theta}_{u,v}^{A}$ the subset of $\Theta_{u,v}^{A}$
which consists of these $H$ with the property that there exists some
constant $n_{0}=n_{0}(H)$ such that $\pi_{n}\left(H\cap O_{F}^{A}(n)\right)\in\Pi_{n}^{A}$
for all $n$. Then (\ref{eq:i.o.}) implies $\mu\left(\tilde{\Theta}_{u,v}^{A}\right)=\mu\left(\Theta_{u,v}^{A}\right)$. 

For a subgroup $\Gamma\in\Pi_{n}^{A}$, there are well-defined subsets
$U_{i}\subseteq D_{n,A}^{(i)}$, $i\in\left\{ 0,\ldots,l\right\} $,
associated with $\Gamma$, such that for each $i$, the set $U_{i}$
is the gigantic transitive component of $\Gamma$ on label $i$ vertices.
For $H\in\tilde{\Theta}_{u,v}^{A}$, denote by $Y_{H}^{i}(n)$ the
set $U_{i}$ associated with $\pi_{n}\left(H\cap O_{F}^{A}(n)\right)$,
$n\ge n_{0}(H)$. By the same argument as in Claim \ref{children},
a vertex $x$ is in $\cup_{i=0}^{l}Y_{H}^{i}(n)$ if and only if all
of its children are in $\cup_{i=0}^{l}Y_{H}^{i}(n+1)$. Thus if $n$
is a level with $n\ge n_{0}(H)$ and $x\in\cup_{i=0}^{l}Y_{H}^{i}(n)$,
then in the subtree rooted at $x$, for every level $k\in\mathbb{N},$we
have 
\[
\prod_{i=0}^{l}{\rm Alt}\left(C_{x}^{k}\cap D_{n+k,A}^{(i)}\right)\le\pi_{n+k}\left(H\cap O_{F}^{A}(n+k)\right).
\]
Since $H$ is closed, we conclude that $H$ contains the derived subgroup
of $R_{O_{F}^{A}}\left(C_{x}\right)$. 

Note that since $x\in\mathcal{T}\setminus A$ and $A$ is an expansion
of $A_{0}$, we have $R_{O_{F}^{A}}\left(C_{x}\right)=R_{O_{F}^{A_{0}}}\left(C_{x}\right)$.
Finally, since the collection of events $\Theta_{u,v}^{A}$, where
$A$ goes over all expansions of $A_{0}$ and $u,v$ go over all distinct
vertices in $\partial A$ of the same $\ell_{F}$-label, form an open
cover of ${\rm Sub}\left(O_{F}^{A_{0}}\right)\setminus\{id\}$, we
have that the union of $\tilde{\Theta}_{u,v}^{A}$ over all such triples
$A,u,v$ gives the full measure subset in the statement. 

\end{proof}

\bibliographystyle{alpha}
\bibliography{neretin}

\end{document}